\newtheorem{thm}{Theorem}[section]
\newtheorem*{SpSzRL}{The sparse Szemer\'edi regularity lemma}
\newtheorem*{EL}{The embedding lemma}
\newtheorem{lemma}[thm]{Lemma}
\newtheorem{cor}[thm]{Corollary}
\newtheorem{conj}[thm]{Conjecture}
\newtheorem{prop}[thm]{Proposition}
\theoremstyle{definition}
\newtheorem*{WAlg}{The Scythe Algorithm}
\newtheorem*{Const}{Construction}
\newtheorem*{Proc}{Constructing $g_0$ and $f_0^*$}
\newtheorem*{Prop}{Properties}
\newtheorem{defn}[thm]{Definition}
\newtheorem{remark}[thm]{Remark}
\newtheorem*{claim*}{Claim}
\DeclareMathOperator{\ex}{ex}
\DeclareMathOperator{\Aut}{Aut}
\newcommand{\Ex}{\mathbb{E}}
\renewcommand{\Pr}{\mathbb{P}}
\newcommand{\ol}{\overline}
\newcommand{\eps}{\varepsilon}
\newcommand{\A}{\mathcal{A}}
\newcommand{\B}{\mathcal{B}}
\newcommand{\F}{\mathcal{F}}
\newcommand{\G}{\mathcal{G}}
\newcommand{\Gt}{\tilde{G}}
\newcommand{\HH}{\mathcal{H}}
\newcommand{\I}{\mathcal{I}}
\newcommand{\N}{\mathbb{N}}
\newcommand{\p}{\mathbf{p}}
\newcommand{\cP}{\mathcal{P}}
\newcommand{\ZZ}{\mathbb{Z}}
\renewcommand{\S}{\mathcal{S}}
\renewcommand{\le}{\leqslant}
\renewcommand{\ge}{\geqslant}
\title{Independent sets in hypergraphs}
\date{\today}
\author{J\'ozsef Balogh}
\address{Department of Mathematics, University of Illinois, 1409 W. Green Street, Urbana, IL 61801} \email{jobal@math.uiuc.edu}
\author{Robert Morris} 
\address{IMPA, Estrada Dona Castorina 110, Jardim Bot\^anico, Rio de Janeiro, RJ, Brasil} \email{rob@impa.br}
\author{Wojciech Samotij}
\address{School of Mathematical Sciences, Tel Aviv University, Tel Aviv 69978, Israel; and Trinity College, Cambridge CB2 1TQ, UK} \email{ws299@cam.ac.uk}
\thanks{Research supported in part by: (JB) NSF CAREER Grant DMS-0745185, UIUC Campus Research Board Grant 11067, and OTKA Grant K76099; (RM) CNPq bolsa de Produtividade em Pesquisa; (WS) ERC Advanced Grant DMMCA and a Trinity College JRF}
\begin{document}

\begin{abstract}
  Many important theorems and conjectures in combinatorics, such as the theorem of Szemer\'edi on arithmetic progressions and the Erd{\H{o}}s-Stone Theorem in extremal graph theory, can be phrased as statements about families of independent sets in certain uniform hypergraphs. In recent years, an important trend in the area has been to extend such classical results to the so-called `sparse random setting'. This line of research has recently culminated in the breakthroughs of Conlon and Gowers and of Schacht, who developed general tools for solving problems of this type. Although these two papers solved very similar sets of longstanding open problems, the methods used are very different from one another and have different strengths and weaknesses.

  In this paper, we provide a third, completely different approach to proving extremal and structural results in sparse random sets that also yields their natural `counting' counterparts. We give a structural characterization of the independent sets in a large class of uniform hypergraphs by showing that every independent set is almost contained in one of a small number of relatively sparse sets. We then derive many interesting results as fairly straightforward consequences of this abstract theorem. In particular, we prove the well-known conjecture of Kohayakawa, \L uczak, and R{\"o}dl, a probabilistic embedding lemma for sparse graphs. We also give alternative proofs of many of the results of Conlon and Gowers and Schacht, such as sparse random versions of Szemer\'edi's theorem, the Erd{\H o}s-Stone Theorem, and the Erd{\H o}s-Simonovits Stability Theorem, and obtain their natural `counting' versions, which in some cases are considerably stronger. For example, we show that for each positive $\beta$ and integer $k$, there are at most $\binom{\beta n}{m}$ sets of size~$m$ that contain no $k$-term arithmetic progression, provided that $m \ge Cn^{1-1/(k-1)}$, where $C$ is a constant depending only on $\beta$ and $k$. We also obtain new results, such as a sparse version of the Erd\H{o}s-Frankl-R\"odl Theorem on the number of $H$-free graphs and, as a consequence of the K\L R conjecture, we extend a result of R\"odl and Ruci\'nski on Ramsey properties in sparse random graphs to the general, non-symmetric setting. 
\end{abstract}

\maketitle

\section{Introduction}

A great many of the central questions in combinatorics fall into the following general framework: Given a finite set $V$ and a collection $\HH \subseteq \cP(V)$ of \emph{forbidden structures}, what can be said about sets $I \subseteq V$ that do not contain any member of $\HH$? For example, the celebrated theorem of Szemer{\'e}di~\cite{Sz} states that if $V = \{1, \ldots, n\}$ and $\HH$ is the collection of $k$-term arithmetic progressions in $\{1, \ldots, n\}$, then every set $I$ that contains no member of $\HH$ satisfies $|I| = o(n)$. The archetypal problem studied in extremal graph theory, dating back to the work of Tur{\'a}n~\cite{Turan} and Erd{\H o}s and Stone~\cite{ErSt}, is the problem of characterizing such sets $I$ when $V$ is the edge set of the complete graph on $n$ vertices and $\HH$ is the collection of copies of some fixed graph $H$ in $K_n$. In this setting, a great deal is known, not only about the maximum size of $I$ that contains no member of $\HH$, but also what the largest such sets look like, how many such sets there are, and what the structure of a typical such set is.

A collection $\HH \subseteq \cP(V)$ as above is usually referred to as a \emph{hypergraph} on the vertex set $V$ and any set $I \subseteq V$ that contains no element (\emph{edge}) of $\HH$ is called an \emph{independent set}. Therefore, one might say that a large part of extremal combinatorics is concerned with studying independent sets in various specific hypergraphs. We might add here that in many natural settings, such as the two mentioned above, the hypergraphs considered are \emph{uniform}, that is, all edges of $\HH$ have the same size.

Although it might at first seem somewhat artificial to study concrete questions in such an abstract setting, the past few years have proved that taking such a general approach can be highly beneficial. The recently-proved general transference theorems of Conlon and Gowers~\cite{CG} and Schacht~\cite{Sch} (see also~\cite{FRS}), which imply, among other things, sparse random analogues of the classical theorems of Szemer{\'e}di and of Erd{\H o}s and Stone, were stated in the language of hypergraphs. Roughly speaking, these transference theorems say the following: Let $\HH$ be a hypergraph whose edges are sufficiently `uniformly distributed'. Then the independence number of $\HH$ is `well-behaved' with respect to taking subhypergraphs induced by (sufficiently dense) random subsets of the vertex set. More precisely, given $p \in [0,1]$ and a finite set $V$, we shall write $V_p$ to denote the \emph{$p$-random subset} of $V$, that is, the random subset of $V$ in which each element of $V$ is included with probability $p$, independently of all other elements. We write $\alpha(\HH)$ and $v(\HH)$ to denote the size of the largest independent set and the number of vertices in a hypergraph $\HH$, respectively. The results of Conlon and Gowers~\cite{CG} and Schacht~\cite{Sch} imply, in particular, that if the distribution of the edges of some uniform hypergraph $\HH$ is sufficiently `balanced', then with probability tending to $1$ as $v(\HH) \to \infty$,
\[
\alpha\big( \HH[V(\HH)_p] \big) \le p \alpha(\HH) + o\big(p v(\HH)\big),
\]
provided that $p$ is sufficiently large.

In this work, we give an approximate structural characterization of the family of all independent sets in uniform hypergraphs whose edge distribution satisfies a certain natural boundedness condition. More precisely, we shall prove that the family $\I(\HH)$ of independent sets of such a hypergraph $\HH$ exhibits a certain clustering phenomenon. Our main result (Theorem~\ref{thm:main}, below) states that $\I(\HH)$ admits a partition into relatively few classes with the following property: all members of each class are essentially contained in a single `almost independent' subset of $V(\HH)$ (i.e., one which contains only a tiny proportion of all the edges of $\HH$). This somewhat abstract statement has surprisingly many deep and interesting consequences, some of which we list in the remainder of this section. We remark that Theorem~\ref{thm:main} was partly inspired by the work of Kleitman and Winston~\cite{KW}, who implicitly considered a statement of this type in the setting of graphs ($2$-uniform hypergraphs) and subsequently used it to bound the number of $n$-vertex graphs without a $4$-cycle. We also note that a result similar to Theorem~\ref{thm:main} was independently proved by Saxton and Thomason~\cite{SaTh}, who also use it to derive many of the statements that we present in Sections~\ref{sec:intro-APs}--~\ref{sec:KLR-intro}.

\subsection{The number of sets with no $k$-term arithmetic progression}

\label{sec:intro-APs}

The celebrated theorem of Szemer{\'e}di~\cite{Sz} says that for every $k \in \N$, the largest subset of $\{1, \ldots, n\}$ that contains no $k$-term arithmetic progression (AP) has $o(n)$ elements. It immediately follows that there are only $2^{o(n)}$ subsets of $\{1, \ldots, n\}$ with no $k$-term AP. Our first result can be viewed as a sparse analogue of this statement.

\begin{thm}
  \label{thm:Sz}
  For every positive $\beta$ and every $k \in \N$, there exist constants $C$ and $n_0$ such that the following holds. For every $n \in \N$ with $n \ge n_0$, if $m \ge Cn^{1-1/(k-1)}$, then there are at most
  \[
  \binom{\beta n}{m}
  \]
  $m$-subsets of $\{1, \ldots, n\}$ that contain no $k$-term AP.
\end{thm}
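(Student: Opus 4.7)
The plan is to apply Theorem~\ref{thm:main}, the container theorem of this paper, to the $k$-uniform hypergraph $\mathcal{H}$ on vertex set $V = \{1,\ldots,n\}$ whose edges are the $k$-term arithmetic progressions in $V$. First, I would verify the boundedness hypothesis of Theorem~\ref{thm:main}: the hypergraph $\mathcal{H}$ has $\Theta(n^2)$ edges, each vertex lies in $\Theta(n)$ $k$-APs, and for every $2 \le j \le k$, every $j$-element subset of $V$ lies in at most $O_k(1)$ $k$-APs (since two elements together with their positions in the progression already determine it). A routine calculation then shows that the ``balancing parameter'' $\tau$ of Theorem~\ref{thm:main} can be chosen of order $n^{-1/(k-1)}$, matching exactly the exponent $1-1/(k-1)$ in the statement.

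Given $\beta > 0$, I would next invoke the supersaturation form of Szemer\'edi's theorem: for any $\beta_1 \in (0,\beta)$ to be chosen later, there is $\delta > 0$ such that every subset of $[n]$ of size at least $\beta_1 n$ contains at least $\delta n^2$ $k$-term APs. Feeding this $\delta$ into Theorem~\ref{thm:main} produces a family $\mathcal{F}$ of \emph{containers} in $V$ such that (i) every $k$-AP-free subset of $V$ is contained in some $F \in \mathcal{F}$, (ii) every $F \in \mathcal{F}$ spans fewer than $\delta n^2$ $k$-APs and hence has $|F| < \beta_1 n$, and (iii) $\log|\mathcal{F}|$ is bounded in terms of $\tau n = O(n^{1-1/(k-1)})$.

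The counting step is then immediate: every $k$-AP-free $m$-subset of $[n]$ lies in some container, so the number of such sets is at most
\[
|\mathcal{F}| \cdot \binom{\beta_1 n}{m}.
\]
For $m$ in the stated range (well below $\beta_1 n$), the ratio $\binom{\beta n}{m}/\binom{\beta_1 n}{m}$ is at least $(\beta/\beta_1)^m$, and by choosing $\beta_1$ small enough (with the corresponding $\delta$) and $C$ large enough, this ratio will absorb the factor $|\mathcal{F}|$, yielding the desired bound $\binom{\beta n}{m}$.

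The main obstacle will be the trade-off in this last step. Pushing $\beta_1$ down to enlarge the binomial ratio forces the supersaturation constant $\delta$ down, which in turn inflates $|\mathcal{F}|$; conversely, increasing $\beta_1$ shrinks the gap we have to work with. The delicate point is to calibrate $\beta_1$, $\delta$, and the internal parameters of Theorem~\ref{thm:main} so that $\log|\mathcal{F}| \le m\log(\beta/\beta_1)$ holds for every $m \ge C n^{1-1/(k-1)}$, i.e., so that the container bound plays nicely with the logarithmic gain in the binomial ratio.
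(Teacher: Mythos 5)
Your overall route is the paper's route: build the $k$-uniform hypergraph of $k$-APs in $[n]$, verify the boundedness hypothesis of Theorem~\ref{thm:main} with $p \asymp n^{-1/(k-1)}$, obtain $(\F,\eps)$-density from the Varnavides supersaturation form of Szemer\'edi's theorem, and count. The problem is in the final counting step, and it is not a detail of ``calibration'' -- it is a genuine gap.

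You propose to bound the number of $k$-AP-free $m$-sets by $|\F|\cdot\binom{\beta_1 n}{m}$ and absorb $|\F|$ into the ratio $\binom{\beta n}{m}/\binom{\beta_1 n}{m}\ge(\beta/\beta_1)^m$. But $|\F|$ here is the number of fingerprints $S\in\S$, which is at most $\binom{n}{\le C'pn}$, so $\log|\F|\asymp C'pn\log\frac{1}{p}\asymp n^{1-1/(k-1)}\log n$. When $m$ is at the critical threshold $\Theta(n^{1-1/(k-1)})$, you need $\log|\F|\le m\log(\beta/\beta_1)$, i.e.\ $\log n = O(1)$, which fails for fixed $\beta_1$. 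No choice of the constants $\beta_1,\delta,C$ closes this; you would need $m\gtrsim n^{1-1/(k-1)}\log n$, weaker than the theorem.

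The feature of Theorem~\ref{thm:main} you are discarding is precisely what saves the extra $\log n$: the fingerprint $g(I)$ is a \emph{subset} of $I$. So for each $S\in\S$ with $|S|=s$, every $I$ with $g(I)=S$ consists of $S$ together with $m-s$ elements of $f(S)$, giving at most $\binom{|f(S)|}{m-s}\le\binom{\delta n}{m-s}$ such $I$, not $\binom{\delta n + s}{m}$. Summing,
\[
|\I(\HH,m)| \le \sum_{s\le C'pn}\binom{n}{s}\binom{\delta n}{m-s},
\]
and the ratio $\binom{n}{s}\binom{\delta n}{m-s}/\binom{\delta n}{m}\le\big(\tfrac{en}{s}\big)^s\big(\tfrac{m}{\delta n-m}\big)^s=\big(\tfrac{em}{\delta s}\cdot\tfrac{n}{\delta n-m}\big)^s$: the factor $n^s$ from choosing the fingerprint is cancelled by the factor $(m/n)^s$ gained from shrinking $m$ to $m-s$. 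With $s\le C'pn\le\delta m$ this is at most $(2e/\delta^2)^{\delta m}$, which is a constant-to-the-$m$ and \emph{can} be absorbed by choosing $\delta$ small. This is exactly what the paper does. So: keep your setup, but replace the crude ``number of containers times subsets of a container'' count with the summation over fingerprint sizes above.

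Two smaller issues worth noting. First, Theorem~\ref{thm:main} gives $I\subseteq g(I)\cup f(g(I))$, not $I\subseteq f(g(I))$; the container for $I$ has size at most $\beta_1 n + C'pn$, not $\beta_1 n$. This is harmless (shrink $\beta_1$ slightly) but should be stated. Second, you must also dispose of the range $m>\delta n/2$ separately (Szemer\'edi's theorem makes the count zero there) before assuming $m$ is small enough for the binomial manipulations.
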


We shall deduce Theorem~\ref{thm:Sz} from our main theorem, Theorem~\ref{thm:main}, and a robust version of Szemer{\'e}di's theorem, see Section~\ref{sec:Sz}. The sparse random analogue of Szemer{\'e}di's theorem, proved by Schacht~\cite{Sch} and independently by Conlon and Gowers~\cite{CG}, follows as an easy corollary of Theorem~\ref{thm:Sz}. Following~\cite{CG}, we shall say that a set $A \subseteq \N$ is \emph{$(\delta,k)$-Szemer{\'e}di} if every subset $B \subseteq A$ with at least $\delta|A|$ elements contains a $k$-term AP. For the sake of brevity, let $[n] = \{1, \ldots, n\}$ and recall that $[n]_p$ denotes the $p$-random subset of $[n]$.

\begin{cor}
  \label{cor:Sz}
  For every $\delta \in (0,1)$ and every $k \in \N$, there exists a constant $C$ such that the following holds. If $p_n \ge Cn^{-1/(k-1)}$ for all sufficiently large $n$, then
  \[
  \lim_{n \to \infty} \Pr\big( \text{$[n]_{p_n}$ is $(\delta,k)$-Szemer{\'e}di} \big) = 1.
  \]
\end{cor}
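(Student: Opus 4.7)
The plan is to deduce Corollary~\ref{cor:Sz} from Theorem~\ref{thm:Sz} by a first-moment union bound. Given $\delta\in(0,1)$ and $k\ge 3$ (the case $k=2$ is essentially trivial, since then $(\delta,k)$-Szemer\'ediness reduces to $|[n]_{p_n}|\ge 2$), I would set $\beta:=\delta/(3e)$ and let $C_1$ be the constant supplied by Theorem~\ref{thm:Sz} for this value of $\beta$ and for $k$. Then set $C:=2C_1/\delta$ and assume $p_n\ge Cn^{-1/(k-1)}$, so in particular $p_n n\ge Cn^{1-1/(k-1)}\to\infty$.

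The first step is to observe that, by Chernoff's inequality applied to $|[n]_{p_n}|\sim\Bin(n,p_n)$, we have $|[n]_{p_n}|\ge p_n n/2$ with probability $1-e^{-\Omega(p_n n)}=1-o(1)$. On this event, any subset $B\subseteq [n]_{p_n}$ with $|B|\ge\delta|[n]_{p_n}|$ satisfies $|B|\ge m_0:=\lceil\delta p_n n/2\rceil$. Hence it suffices to show that, with probability $1-o(1)$, $[n]_{p_n}$ contains no AP-free subset of size at least $m_0$.

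Our choice of $C$ guarantees $m_0\ge C_1 n^{1-1/(k-1)}$, so for every $m\ge m_0$ Theorem~\ref{thm:Sz} bounds the number of AP-free $m$-subsets of $[n]$ by $\binom{\beta n}{m}$. Since each such $m$-subset is contained in $[n]_{p_n}$ with probability $p_n^m$, Markov's inequality combined with the union bound over $m$ gives
\[
\Pr\big(\exists\text{ AP-free }B\subseteq [n]_{p_n}\text{ with }|B|\ge m_0\big)\le\sum_{m\ge m_0}\binom{\beta n}{m}p_n^m\le\sum_{m\ge m_0}\bigg(\frac{e\beta np_n}{m}\bigg)^m\le\sum_{m\ge m_0}(2/3)^m,
\]
where the last inequality uses $m\ge\delta p_n n/2$ together with $2e\beta/\delta=2/3$ (terms with $m>\beta n$ vanish since $\binom{\beta n}{m}=0$). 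This sum is $O\big((2/3)^{m_0}\big)=o(1)$, which combined with the Chernoff event completes the argument.

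There is no real obstacle beyond invoking Theorem~\ref{thm:Sz}; all of the combinatorial content is already packaged there, and what remains is a standard deletion/first-moment calculation. The only point to watch is the quantitative coupling of constants: one needs $\beta$ chosen small enough relative to $\delta$ that $e\beta np_n/m_0$ is bounded strictly below $1$, which is what produces the geometric decay in $m$ that kills the sum. Any value $\beta<\delta/e$ would suffice, and $\beta=\delta/(3e)$ is convenient.
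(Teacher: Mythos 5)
Your proof is correct and follows essentially the same strategy as the paper: choose $\beta$ small relative to $\delta$, condition on $|[n]_{p_n}| \ge p_n n/2$ via Chernoff, and then apply Theorem~\ref{thm:Sz} together with a first-moment bound to show $[n]_{p_n}$ a.a.s.\ contains no large AP-free set. The only minor differences are that the paper fixes a single $m = \delta p_n n/2$ (rather than summing over $m \ge m_0$, which is harmless but redundant, since containing an AP-free set of size $\ge m_0$ already implies containing one of size exactly $m_0$), and the paper actually proves the stronger Corollary~\ref{cor:Sz-strong} with an explicit exponential error probability, which requires the slightly smaller choice $\beta = \tfrac{\delta}{2e}e^{-1/\delta}$; your larger $\beta = \delta/(3e)$ suffices for the qualitative $o(1)$ statement you are targeting.
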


We remark that Theorem~\ref{thm:Sz} and Corollary~\ref{cor:Sz} are both sharp up to the value of the constant $C$, see the discussion in Section~\ref{sec:Sz}, where both of these statements are proved.

Our main result has a variety of other applications in additive combinatorics, see for example~\cite{ABMS1,ABMS2} where, jointly with Alon, we used a much simpler version of it to count sum-free sets of fixed size in various Abelian groups and the set $[n]$. In Section~\ref{sec:Sz}, we shall mention two other applications: generalizations of Theorem~\ref{thm:Sz} to higher dimensions and to $k$-term APs whose common difference is of the form $d^r$. In each case, the random version (which was proved in~\cite{CG,Sch}) follows as an easy corollary.

\subsection{Tur{\'a}n's problem in random graphs}

\label{sec:turan-problem}

The famous theorem of Erd{\H o}s and Stone~\cite{ErSt} states that the maximum number of edges in an $H$-free graph on $n$ vertices, the \emph{Tur{\'a}n number for $H$}, denoted $\ex(n,H)$, satisfies
\begin{equation}
  \label{eq:exnH}
  \ex(n,H) = \left( 1 - \frac{1}{\chi(H) - 1} + o(1) \right) \binom{n}{2},
\end{equation}
where $\chi(H)$ is the chromatic number of $H$. The analogue of this theorem for the Erd{\H o}s-R{\'e}nyi random graph $G(n,p)$ was first studied by Babai, Simonovits, and Spencer~\cite{BaSiSp}, who proved that \emph{asymptotically almost surely} (a.a.s.~for short), i.e., with probability tending to~$1$ as $n \to \infty$, the largest triangle-free subgraph of $G(n,1/2)$ is bipartite, and by Frankl and R{\"o}dl~\cite{FrRo}, who proved that if $p \ge n^{-1/2 + \eps}$ then a.a.s.~the largest triangle-free subgraph of $G(n,p)$ has $pn^2/8 + o(pn^2)$ edges. The systematic study of the Tur\'an problem in $G(n,p)$ was initiated by Haxell, Kohayakawa, and \L uczak~\cite{HaKoLu95, HaKoLu96} and by Kohayakawa, \L uczak, and R\"odl~\cite{KLR97}, who posed the following problem. For a fixed graph $H$, determine necessary and sufficient conditions on a sequence $\p \in [0,1]^\N$ of probabilities such that, a.a.s.,
\begin{equation}
  \label{eq:exGnpH}
  \ex\big(G(n,p_n), H\big) = \left( 1 - \frac{1}{\chi(H)-1} + o(1) \right) \binom{n}{2} p_n,
\end{equation}
where $\ex(G,H)$ denotes the maximum number of edges in an $H$-free subgraph of $G$.

By considering a random $(\chi(H)-1)$-partition of the vertex set of $G(n,p)$, it is straightforward to show that the inequality $\ex\big(G(n,p), H \big) \ge \left(1 - \frac{1}{\chi(H) - 1} + o(1)\right) \binom{n}{2} p$ holds for every $p \in [0,1]$. On the other hand, if the number of copies of some subgraph $H' \subseteq H$ in $G(n,p)$ is much smaller than the number of edges in $G(n,p)$, then the converse inequality cannot hold, since one can make any graph $H$-free by removing from it one edge from each copy of $H'$. This observation motivates the notion of \emph{$2$-density} of $H$, denoted by $m_2(H)$, which is defined by
\begin{equation}
  \label{eq:m2H}
  m_2(H) = \max \left\{ \frac{e(H') - 1}{v(H') - 2} \colon H' \subseteq H \text{ with } v(H') \ge 3 \right\}.
\end{equation}
It now follows easily that for every graph $H$ with maximum degree at least $2$ and every $\delta \in \big(0, 1/(\chi(H)-1)\big)$, there exists a positive constant $c$ such that if $p_n \le c n^{-1/m_2(H)}$, then a.a.s.
\[
\ex\big(G(n,p_n), H\big) > \left( 1 - \frac{1}{\chi(H) - 1} + \delta \right) \binom{n}{2} p_n.
\]
It was conjectured by Haxell, Kohayakawa, and \L uczak~\cite{HaKoLu95} and Kohayakawa, \L uczak, and R{\"o}dl~\cite{KLR97} that the above simple argument, removing an arbitrary edge from each copy of $H'$ in $G(n,p)$, is the main obstacle that prevents~\eqref{eq:exGnpH} from holding asymptotically almost surely. The conjecture, often referred to as Tur{\'a}n's theorem for random graphs, has attracted considerable attention in the past fifteen years. Numerous partial results and special cases had been established by various researchers~\cite{Fu, Ge, GeScSt, HaKoLu95, HaKoLu96, KLR97, KRS04, SzVu} before the conjecture was finally proved by Conlon and Gowers~\cite{CG} (under the assumption that $H$ is \emph{strictly $2$-balanced}\footnote{A graph $H$ is $2$-balanced if the maximum in~\eqref{eq:m2H} is achieved with $H' = H$, that is, if $m_2(H) = \frac{e(H)-1}{v(H)-2}$. It is strictly $2$-balanced if $m_2(H) > m_2(H')$ for every proper subgraph $H' \subsetneq H$.}) and by Schacht~\cite{Sch}.

\begin{thm}
  \label{thm:Turan-Gnp}
  For every graph $H$ with $\Delta(H) \ge 2$ and every positive $\delta$, there exists a positive constant $C$ such that if $p_n \ge C n^{-1/m_2(H)}$, then a.a.s.
  \[
  \ex\big(G(n,p_n), H\big) \le \left( 1 - \frac{1}{\chi(H) - 1} + \delta \right) \binom{n}{2} p_n.
  \]  
\end{thm}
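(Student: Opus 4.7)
The plan is to apply the container theorem, Theorem~\ref{thm:main}, to the auxiliary hypergraph $\HH$ on vertex set $V(\HH) = E(K_n)$ whose edges are the $e(H)$-element edge sets of copies of $H$ in $K_n$. An independent set of $\HH$ is exactly an $H$-free graph on the labelled vertex set $[n]$, so bounding $\ex\bigl(G(n,p_n), H\bigr)$ reduces to controlling $|I \cap G(n,p_n)|$ uniformly over $I \in \I(\HH)$.

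The first substantive step is to verify the co-degree hypothesis of Theorem~\ref{thm:main}. For each $j \in \{1, \ldots, e(H)-1\}$ and each set $T$ of $j$ edges of $K_n$ spanning a subgraph $H' \subseteq K_n$, the number of copies of $H$ in $K_n$ containing $T$ is at most a constant times $n^{v(H) - v(H')}$. Weighing these co-degrees against the average degree in $\HH$, the definition~\eqref{eq:m2H} of $m_2(H)$ is exactly what is needed for the boundedness condition to hold at the threshold $p = C n^{-1/m_2(H)}$, once $C$ is sufficiently large; this is where the exponent $m_2(H)$ enters the proof.

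With the hypothesis verified, Theorem~\ref{thm:main} produces a family $\F \subseteq \cP(E(K_n))$ of ``containers'' with $\log|\F| = O\bigl(n^{2-1/m_2(H)}\bigr)$ such that every $H$-free $I \subseteq E(K_n)$ satisfies $I \subseteq F(I) \cup S(I)$ for some $F(I) \in \F$ and some ``leftover'' $S(I) \subseteq E(K_n)$ with $|S(I)| \le \eta \binom{n}{2}$, and such that each $F \in \F$ contains at most $\eta \cdot n^{v(H)}$ copies of $H$, where $\eta > 0$ may be prescribed in advance. The graph removal lemma combined with the Erd\H{o}s-Stone theorem~\eqref{eq:exnH} then upgrades this sparsity to the size bound $|F| \le \bigl(1 - \tfrac{1}{\chi(H)-1} + \tfrac{\delta}{2}\bigr)\binom{n}{2}$ for every $F \in \F$, provided $\eta$ is small enough in terms of $\delta$ and $H$.

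The proof concludes via Chernoff's inequality and a union bound. For each fixed $F \in \F$, the random variable $|F \cap G(n,p_n)|$ has mean $|F| p_n$, so Chernoff bounds the probability that it exceeds $\bigl(1 - \tfrac{1}{\chi(H)-1} + \tfrac{3\delta}{4}\bigr)\binom{n}{2} p_n$ by $\exp(-c\delta^2 p_n n^2)$ for some absolute $c > 0$. Since $p_n \ge Cn^{-1/m_2(H)}$ with $C$ large, this exponent dominates $\log|\F|$, and the union bound over $\F$ succeeds; the contribution of $|S(I)|$ is rendered negligible by also requiring $\eta \le \delta p_n / 8$. I expect the main technical obstacle to be the first step---translating the local structure of $K_n$ (co-degrees governed by $m_2(H)$) into the precise boundedness hypothesis of Theorem~\ref{thm:main}, including the matching of the resulting bound on $\log|\F|$ against the Chernoff error at the threshold $n^{-1/m_2(H)}$. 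The remaining deduction from the container conclusion is largely standard.
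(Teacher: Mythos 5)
Your overall architecture matches the paper's: the same auxiliary $e(H)$-uniform hypergraph $\HH$ of copies of $H$ on vertex set $E(K_n)$, the same verification of the degree condition via $m_2(H)$ (this is essentially Proposition~\ref{prop:Delta-bal-hyp}), the same idea of containers of size at most $(1 - \tfrac{1}{\chi(H)-1} + \tfrac{\delta}{2})\binom{n}{2}$, and Chernoff plus a union bound to finish. The use of the removal lemma together with Erd\H{o}s--Stone to pass from ``few copies of $H$'' to the size bound is a valid, if slightly roundabout, substitute for the paper's direct appeal to the Erd\H{o}s--Simonovits supersaturation theorem (the two are contrapositives of each other). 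However, there is a genuine gap in the final step.

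You claim $\log|\F| = O(n^{2-1/m_2(H)})$ and then run a naive union bound over containers against the Chernoff tail $\exp(-c\delta^2 p_n n^2)$. The claimed bound on $\log|\F|$ is off by a $\log n$ factor. Theorem~\ref{thm:main} produces fingerprints $S = g(I)$ of size at most $C'p\,v(\HH) = C'n^{2-1/m_2(H)}$, so the number of possible fingerprints, and hence the number of distinct containers $f(S)$, is at most $\binom{n^2}{\le C'n^{2-1/m_2(H)}}$, whose logarithm is $\Theta\bigl(n^{2-1/m_2(H)}\log n\bigr)$, not $O(n^{2-1/m_2(H)})$. Since your Chernoff exponent is $c\delta^2 p_n n^2 = c\delta^2 C\, n^{2-1/m_2(H)}$ with $C$ a fixed constant, the union bound over containers loses by this $\log n$ factor; making it close would force $C \to \infty$ with $n$, which is not permitted by the statement of the theorem. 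This is not a matter of sloppy bookkeeping but the central reason the argument fails as written.

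The paper's proof of Theorem~\ref{thm:Sch-weak} repairs exactly this. Because $g(I) \subseteq I$, any $H$-free $I \subseteq V_q$ with $g(I) = S$ forces $S \subseteq V_q$, and the paper writes
\[
\Pr\bigl(\exists\, I \in \I(\HH,m)\colon I \subseteq V_q\bigr) \le \sum_{S\in\S} \Pr\bigl(S \subseteq V_q\bigr)\cdot \Pr\Bigl(\bigl|V_q \cap f(S)\bigr| \ge m - |S|\Bigr).
\]
The factor $\Pr(S\subseteq V_q) = q^{|S|}$ in the sum is what cancels the extraneous $\log n$: one gets $\sum_S \Pr(S\subseteq V_q) \le \sum_{s \le \delta^3 q v(\HH)} \binom{v(\HH)}{s}q^s \le v(\HH)\cdot(e/\delta^3)^{\delta^3 q v(\HH)}$, which is $\exp(O_\delta(q\,v(\HH)))$ with \emph{no} $\log n$, and the Chernoff tail $\exp(-\delta^2 q v(\HH)/36)$ then dominates for $\delta$ small. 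You will also want to restate the leftover bound correctly: the fingerprint has size $|g(I)| \le C'p\,v(\HH)$, a quantity proportional to $p$, rather than a quantity $\eta\binom{n}{2}$ with $\eta$ a constant you can ``prescribe in advance''; the correct smallness comes from choosing $q = Cp$ with $C$ large, not from shrinking some prescribed $\eta$.
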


Our methods give yet another proof of Theorem~\ref{thm:Turan-Gnp}. In fact, we shall deduce from our main result, Theorem~\ref{thm:main}, a version of the general transference theorem of Schacht~\cite[Theorem~3.3]{Sch}, which easily implies Theorem~\ref{thm:Turan-Gnp} for such graphs $H$. Our version of Schacht's transference theorem, Theorem~\ref{thm:Sch-weak}, is stated and proved in Section~\ref{sec:extremal-results}. We then, in Section~\ref{sec:Turan}, use it to derive a natural generalization of Theorem~\ref{thm:Turan-Gnp} to $t$-uniform hypergraphs, Theorem~\ref{thm:t-Turan-Gnp}, which was also first proved in~\cite{CG} and~\cite{Sch}.

\begin{remark}
  In the original version of this paper, we only proved the results concerning $H$-free graphs under the additional assumption that $H$ is $2$-balanced. However, a simple modification of our method (permitting multiple edges in our hypergraphs, as in~\cite{SaTh}) allowed us to remove this condition. We would like to thank David Saxton for pointing this out. 
\end{remark}

Our methods also yield the following sparse random analogue of the famous stability theorem of Erd{\H o}s and Simonovits~\cite{ES1, ES2}, originally proved by Conlon and Gowers~\cite{CG} in the case when $H$ is strictly $2$-balanced and then extended to arbitrary $H$ by Samotij~\cite{Sam}, who adapted the argument of Schacht~\cite{Sch} for this purpose.

\begin{thm}
  \label{thm:stability-Gnp}
  For every graph $H$ with $\Delta(H) \ge 2$ and every positive $\delta$, there exist positive constants $C$ and $\eps$ such that if $p_n \ge Cn^{-1/m_2(H)}$, then a.a.s.~the following holds. Every $H$-free subgraph of $G(n,p_n)$ with at least
  \[
  \left(1 - \frac{1}{\chi(H)-1} -\eps\right) \binom{n}{2} p_n
  \]
  edges may be made $(\chi(H)-1)$-partite by removing from it at most $\delta n^2p_n$ edges.
\end{thm}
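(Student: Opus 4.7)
The plan is to apply Theorem~\ref{thm:main} to the hypergraph $\HH$ on vertex set $E(K_n)$ whose edges are the (edge sets of) copies of $H$ in $K_n$, so that $\I(\HH)$ is precisely the family of $H$-free graphs on $[n]$. The $2$-density condition $p_n \ge Cn^{-1/m_2(H)}$ is exactly what is needed to verify the boundedness hypotheses of Theorem~\ref{thm:main}, yielding a family of ``containers'' $\{C_1, \ldots, C_M\} \subseteq \cP(E(K_n))$ such that (i) every $H$-free graph on $[n]$ is contained in some $C_j$; (ii) each $C_j$ contains at most $\eta\binom{n}{v(H)}$ copies of $H$, for a parameter $\eta > 0$ of our choosing; and (iii) $\log M \le c_0 n^2 p_n$ for any prescribed $c_0 > 0$, provided $C$ is taken large enough.

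The non-probabilistic input is a robust version of the Erd{\H o}s--Simonovits stability theorem: for every $\delta > 0$ there exist $\eta, \eps' > 0$ such that every graph $F$ on $n$ vertices with at most $\eta n^{v(H)}$ copies of $H$ and at least $\big(1 - \tfrac{1}{\chi(H)-1} - \eps'\big)\binom{n}{2}$ edges admits a partition of $[n]$ into $\chi(H)-1$ classes with at most $\tfrac{\delta}{2}\binom{n}{2}$ intra-class edges. This is obtained by applying the graph removal lemma to delete $o(n^2)$ edges of $F$ and render it $H$-free, then invoking the classical dense Erd{\H o}s--Simonovits stability theorem. Call a container $C_j$ \emph{good} if it admits such a partition and \emph{bad} otherwise; by robust stability, every bad container has fewer than $\big(1 - \tfrac{1}{\chi(H)-1} - \eps'\big)\binom{n}{2}$ edges.

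The probabilistic step combines Chernoff with two union bounds. For each bad container $C_j$,
\[
\Pr\!\left( |E(C_j) \cap E(G(n,p_n))| \ge \Big(1 - \tfrac{1}{\chi(H)-1} - \eps\Big)\binom{n}{2} p_n \right) \le \exp(-c\, n^2 p_n)
\]
for suitable $\eps < \eps'$ and a constant $c = c(\eps,\eps') > 0$. Taking $c_0 = c/2$ in property~(iii) gives $M \exp(-c\, n^2 p_n) \le \exp(-c\, n^2 p_n/2) = o(1)$, so a.a.s.\ no bad container has too many edges in $G(n,p_n)$. Separately, for each good $C_j$ fix a witnessing partition $P_j$ with at most $\tfrac{\delta}{2}\binom{n}{2}$ intra-class edges of $C_j$; another Chernoff bound shows that the number of these edges surviving in $G(n,p_n)$ is at most $\delta n^2 p_n$ except with probability $\exp(-\Omega(n^2 p_n))$, which a union bound over the $M$ containers upgrades to a simultaneous a.a.s.\ statement.

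On the intersection of these events, take any $H$-free $G' \subseteq G(n,p_n)$ with at least $\big(1 - \tfrac{1}{\chi(H)-1} - \eps\big)\binom{n}{2} p_n$ edges and pick $j$ with $G' \subseteq C_j$. Since $|E(C_j)\cap E(G(n,p_n))| \ge e(G')$, the container $C_j$ cannot be bad; so it is good, and $P_j$ leaves at most $\delta n^2 p_n$ intra-class edges of $G' \subseteq E(C_j) \cap E(G(n,p_n))$, as required. The main obstacle is the robust stability statement powering the good/bad dichotomy, together with the bookkeeping needed to match the parameter $\eta$ supplied by Theorem~\ref{thm:main} to the $\eta$ demanded by robust stability; granting these, the argument follows the same template as the container-theoretic proof of Theorem~\ref{thm:Turan-Gnp} via Theorem~\ref{thm:Sch-weak}.
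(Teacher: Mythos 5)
Your high-level strategy (containers for the hypergraph of copies of $H$ in $K_n$, good/bad dichotomy via a robust stability statement obtained from the removal lemma plus the classical Erd\H{o}s--Simonovits theorem, then Chernoff) follows the same template as the paper's proof, which derives the statement from the general transference Theorem~\ref{thm:stability-random}. However, the probabilistic step has a genuine gap: your claim (iii), that $\log M \le c_0 n^2 p_n$ for any prescribed $c_0$ provided $C$ is large, is false. The containers produced by Theorem~\ref{thm:main} are indexed by fingerprints $S$ of size at most $C' p\,v(\HH)$ where $p = n^{-1/m_2(H)}$ is dictated by Proposition~\ref{prop:Delta-bal-hyp} (and is independent of $p_n$), so
\[
\log M \,\approx\, C'\, n^{2-1/m_2(H)}\, \log\!\Big(\tfrac{1}{p}\Big) \,=\, \Theta\big(n^{2-1/m_2(H)} \log n\big),
\]
whereas $n^2 p_n$ can be as small as $C n^{2-1/m_2(H)}$. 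The ratio $\log M / (n^2 p_n)$ therefore diverges like $\log n$, and no choice of the constant $C$ can fix this. Consequently a blind union bound over all $M$ containers multiplied by the Chernoff bound $\exp(-c\,n^2 p_n)$ does not give $o(1)$; as written, your argument only covers the regime $p_n \gg n^{-1/m_2(H)}\log n$, losing a logarithmic factor off the threshold.

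The fix, which is what the proofs of Theorems~\ref{thm:Sch-weak} and~\ref{thm:stability-random} actually do, is to exploit the fact that the fingerprint $S = g(I)$ satisfies $S \subseteq I \subseteq G(n,p_n)$. You should therefore not union-bound over all containers, but over containers weighted by $\Pr\big(S \subseteq G(n,p_n)\big) = p_n^{|S|}$. Using $|S| \le C' p\,v(\HH) \le \delta^3 p_n v(\HH)$ (this is exactly where the constant $C$ is used, via $p_n \ge (C'/\delta^3)\, p$), one gets
\[
\sum_{S \in \S} \Pr\big(S \subseteq G(n,p_n)\big) \le \sum_{s=0}^{\delta^3 p_n v(\HH)} \binom{v(\HH)}{s}\, p_n^{\,s} \le v(\HH)\,\Big(\frac{e}{\delta^3}\Big)^{\delta^3 p_n v(\HH)} = \exp\big( O(\delta^3 \log(1/\delta)\, p_n v(\HH))\big),
\]
which is dominated by the Chernoff exponent $\exp\big(-\Omega(\delta^2 p_n v(\HH))\big)$ when $\delta$ is small. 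The same fix is needed in your ``good container'' step. Everything else (the choice of hypergraph, Proposition~\ref{prop:Delta-bal-hyp}, Proposition~\ref{prop:stability-removal}, the dichotomy, the parameter bookkeeping for $\eta$) is sound and matches the paper.
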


As with Theorem~\ref{thm:Turan-Gnp}, we shall in fact deduce Theorem~\ref{thm:stability-Gnp} from a more general statement, Theorem~\ref{thm:stability-random}, which is a version of the general transference theorem for stability results proved in~\cite{Sam}. Theorem~\ref{thm:stability-random} is stated and proved in Section~\ref{sec:stability-results}; in Section~\ref{sec:Turan}, we use it to derive Theorem~\ref{thm:stability-Gnp}.

\subsection{The typical structure of $H$-free graphs}

\label{sec:typical-structure}

Let $H$ be an arbitrary non-empty graph. For an integer $n$, denote by $f_n(H)$ the number of labelled $H$-free graphs on the vertex set $[n]$. Since every subgraph of an $H$-free graph is also $H$-free, it follows that $f_n(H) \ge 2^{\ex(n,H)}$. Erd{\H o}s, Frankl, and R{\"o}dl~\cite{ErFrRo} proved that this crude lower bound is in a sense tight, namely that
\begin{equation}
  \label{eq:fnH-upper}
  f_n(H) = 2^{\ex(n,H) + o(n^2)}.
\end{equation}
Our next result can be viewed as a `sparse version' of~\eqref{eq:fnH-upper}. Such a statement was already considered by \L uczak~\cite{Lu}, who derived it from the so-called K\L R conjecture, which we discuss in the next subsection. For integers $n$ and $m$ with $0 \le m \le \binom{n}{2}$, let $f_{n,m}(H)$ be the number of labelled $H$-free graphs on the vertex set $[n]$ that have exactly $m$ edges. The following theorem refines~\eqref{eq:fnH-upper} to $n$-vertex graphs with $m$ edges.

\begin{thm}
  \label{thm:ErFrRo-sparse}
 For every graph $H$ and every positive $\delta$, there exists a positive constant $C$ such that the following holds. For every $n \in \N$, if $m \ge Cn^{2-1/m_2(H)}$, then
  \[
  \binom{\ex(n,H)}{m} \le f_{n,m}(H) \le \binom{\ex(n,H) + \delta n^2}{m}.
  \]
\end{thm}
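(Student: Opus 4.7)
The plan is to apply the main container theorem, Theorem~\ref{thm:main}, to the hypergraph $\HH = \HH_n^H$ on the vertex set $V = E(K_n)$ whose hyperedges are the edge sets of copies of $H$ in $K_n$. Under this identification, $\I(\HH)$ is exactly the family of $H$-free graphs on $[n]$, and an $H$-free graph with $m$ edges is the same as an independent set of size $m$. The codegree conditions needed to invoke Theorem~\ref{thm:main} follow from the definition of $m_2(H)$: for every partial edge set $T \subseteq E(K_n)$ with $2 \le |T| \le e(H)$ spanning $|V(T)|$ vertices, the number of copies of $H$ in $K_n$ whose edge set contains $T$ is $O(n^{v(H) - |V(T)|})$, and a short calculation shows that these bounds match the balancedness condition of Theorem~\ref{thm:main} at the scale $p^* = n^{-1/m_2(H)}$.

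Theorem~\ref{thm:main} then yields, for each $\eta > 0$, a family $\F$ of subsets of $E(K_n)$ such that (i)~every $H$-free graph on $[n]$ is contained in some $C \in \F$, (ii)~each $C \in \F$ contains at most $\eta n^{v(H)}$ copies of $H$, and (iii)~$\log |\F| \le K \cdot n^{2-1/m_2(H)}$ for a constant $K = K(\eta, H)$. Next I would fix $\eta = \eta(\delta, H)$ small enough that the Erd\H{o}s--Simonovits supersaturation theorem guarantees: every graph on $[n]$ with more than $\ex(n,H) + (\delta/2)n^2$ edges contains at least $\eta n^{v(H)}$ copies of $H$. Property (ii) then forces $|C| \le \ex(n,H) + (\delta/2)n^2$ for every $C \in \F$, and property (i) gives
\[
f_{n,m}(H) \le |\F| \cdot \binom{\ex(n,H) + (\delta/2)n^2}{m}.
\]

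To finish, I would apply the elementary binomial estimate
\[
\binom{A+k}{m} \ge \Bigl(1 + \tfrac{k}{A}\Bigr)^m \binom{A}{m}
\]
with $A = \ex(n,H) + (\delta/2)n^2$ and $k = (\delta/2)n^2$. Since $A + k \le n^2$, the ratio $k/A$ is bounded below by a positive constant depending only on $\delta$ and $H$, so $(1 + k/A)^m \ge \exp\bigl(c_{\delta,H}\, m\bigr)$. Choosing $C > K/c_{\delta,H}$ ensures $|\F| \le \exp(c_{\delta,H}\, m)$ whenever $m \ge C n^{2-1/m_2(H)}$, which combines with the previous display to give the claimed upper bound. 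The lower bound $\binom{\ex(n,H)}{m} \le f_{n,m}(H)$ is immediate, since every $m$-edge subgraph of a fixed Tur\'an-extremal $H$-free graph is itself $H$-free.

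The main obstacle, in my view, is obtaining a container count with \emph{no} logarithmic factor in $n$: a naive container bound of the form $|\F| \le \binom{\binom{n}{2}}{b}$ with $b = O(n^{2-1/m_2(H)})$ produces only $\log|\F| = O(n^{2-1/m_2(H)} \log n)$, which the binomial ratio above cannot absorb when $m$ is as small as $C n^{2-1/m_2(H)}$ with $C$ a constant. Extracting the clean bound $\log |\F| = O(n^{2-1/m_2(H)})$ from Theorem~\ref{thm:main} (together with verifying that the codegree conditions of $\HH$ hold at the right scale) is where the real work sits; the supersaturation step and the binomial comparison are routine once that bound is in hand.
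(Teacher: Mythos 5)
Your overall strategy---apply Theorem~\ref{thm:main} to the hypergraph of copies of $H$ in $K_n$, combine with supersaturation, and finish with a binomial estimate---is morally the route the paper takes; it is packaged as Theorem~\ref{thm:Sch-counting}, and the codegree check you describe is Proposition~\ref{prop:Delta-bal-hyp}. But your last step has a genuine gap, and you have correctly located it without realizing that it is not repairable in the way you imagine. Theorem~\ref{thm:main} produces a fingerprint family $\S\subseteq\binom{V(\HH)}{\le C'p\,v(\HH)}$, and the number of fingerprints (equivalently, the number of container classes) really can be of order $\binom{\binom{n}{2}}{\Theta(n^{2-1/m_2(H)})}$, so that $\log|\S|=\Theta\bigl(n^{2-1/m_2(H)}\log n\bigr)$. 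There is no ``clean'' bound $\log|\F|=O(n^{2-1/m_2(H)})$ lurking in Theorem~\ref{thm:main} waiting to be extracted---the $\log n$ is intrinsic to the fingerprint count, and your property~(iii) is simply false.

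The fix is not a sharper container count; it is a different accounting that never separates ``number of containers'' from ``$m$-subsets per container''. The structure you must exploit is that the fingerprint $S=g(I)$ is a \emph{subset} of $I$, so the number of $m$-element independent sets with fingerprint $S$ is at most $\binom{|f(S)|}{m-|S|}$, not $\binom{|f(S)|}{m}$. Summing over $S$ and grouping by $s=|S|$, one bounds
\[
\binom{v(\HH)}{s}\binom{(\alpha+\delta/2)v(\HH)}{m-s}
\le\left(\frac{\alpha+\delta/2}{\alpha+\delta}\right)^{m-s}\left(\frac{e\,v(\HH)}{s}\cdot\frac{m}{(\alpha+\delta)v(\HH)-m}\right)^{s}\binom{(\alpha+\delta)v(\HH)}{m},
\]
and the two occurrences of $v(\HH)$ in the middle factor cancel, leaving a term of the form $(\mathrm{const}\cdot m/s)^s$ with no $n$-dependence and hence no $\log n$. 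This is absorbed by the exponential gain $\bigl((\alpha+\delta/2)/(\alpha+\delta)\bigr)^{m/2}$ provided $s\le\delta^2 m$, which is exactly what $|S|\le C'p\,v(\HH)\le\delta^2 m$ gives. Your decomposition $f_{n,m}(H)\le|\F|\cdot\binom{\ex(n,H)+(\delta/2)n^2}{m}$ discards the containment $g(I)\subseteq I$, and that containment is precisely the piece of structure that removes the logarithm. The remaining pieces of your outline (codegree verification, supersaturation, the trivial lower bound) are correct and match the paper.
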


In fact, we shall deduce from our main result, Theorem~\ref{thm:main}, a `counting version' of the general transference theorem of Schacht~\cite[Theorem~3.3]{Sch}, which easily implies Theorem~\ref{thm:ErFrRo-sparse}. This `counting version' of Schacht's theorem (which refines and, in some respects, strengthens the main results of~\cite{CG,Sch}) is stated and proved in Section~\ref{sec:extremal-results}. We then use it to derive Theorem~\ref{thm:ErFrRo-sparse} in Section~\ref{sec:Turan-counting}. We remark that~\eqref{eq:fnH-upper} was refined in a different sense by Balogh, Bollob{\'a}s, and Simonovits~\cite{BaBoSi04}, who showed that $f_n(H) = 2^{\ex(n,H) + O(n^{2-c(H)})}$, where $c(H)$ is some positive constant, and also gave a very precise structural description of almost all $H$-free graphs. We would also like to point out that our proof of Theorem~\ref{thm:ErFrRo-sparse} does not use Szemer{\'e}di's regularity lemma, unlike the proof given in~\cite{Lu} or the proofs of Erd{\H o}s, Frankl, and R{\"o}dl~\cite{ErFrRo} and Balogh, Bollob{\'a}s, and Simonovits~\cite{BaBoSi04}.

The result of Erd{\H o}s, Frankl, and R{\"o}dl has, in some cases, a structural counterpart that significantly strengthens~\eqref{eq:fnH-upper}. For example, Erd{\H o}s, Kleitman, and Rothschild~\cite{ErKlRo} proved that \emph{almost all} triangle-free graphs are bipartite, that is, that with probability tending to $1$ as $n \to \infty$, a graph selected uniformly at random from the family of all triangle-free graphs on the vertex set $[n]$ is bipartite or, in other words (since clearly every bipartite graph is triangle-free), $f_n(K_3)$ is asymptotic to the number of bipartite graphs on the vertex set $[n]$. Extending this result, Osthus, Pr{\"o}mel, and Taraz~\cite{OsPrTa} proved that if $m \ge Cn^{3/2}\sqrt{\log n}$ for some $C > \sqrt{3}/4$, then almost all $n$-vertex triangle-free graphs with $m$ edges are bipartite. The corresponding result for $K_{r+1}$-free graphs was proved recently in~\cite{BMSW}. 

Our next result, which is a strengthening of Theorem~\ref{thm:ErFrRo-sparse}, is an approximate version of this statement for an arbitrary graph $H$. Such a statement was also considered by \L uczak~\cite{Lu}, who derived it from the K\L R conjecture. Following~\cite{Lu}, given a positive real $\delta$ and an integer $k$, let us say that a graph $G$ is $(\delta,k)$-partite if $G$ can be made $k$-partite by removing from it at most $\delta e(G)$ edges.

\begin{thm}
  \label{thm:H-free-structure}
  For every graph $H$ with $\chi(H) \ge 3$, and every positive $\delta$, there exists a positive constant $C$ such that the following holds. If $m \ge Cn^{2 - 1/m_2(H)}$, then almost all $H$-free graphs with $n$ vertices and $m$ edges are $\big(\delta,\chi(H)-1\big)$-partite.
\end{thm}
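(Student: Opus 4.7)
I would derive this by the container method, combining Theorem~\ref{thm:main} with the Erd{\H o}s--Simonovits stability theorem and comparing the resulting upper bound on ``bad'' graphs to the lower bound $f_{n,m}(H) \ge \binom{\ex(n,H)}{m}$ supplied by Theorem~\ref{thm:ErFrRo-sparse}. Fix $\delta>0$; it suffices to show that the number of $H$-free graphs on $[n]$ with $m$ edges that are not $(\delta,\chi(H)-1)$-partite is $o\bigl(\binom{\ex(n,H)}{m}\bigr)$.

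\textbf{Containers and stability.} I would apply Theorem~\ref{thm:main} to the $v(H)$-uniform hypergraph on vertex set $E(K_n)$ whose edges are the copies of $H$ in $K_n$. For any small $\eta>0$ this should produce a family $\S$ of subsets of $E(K_n)$, with $|\S|$ small compared to $2^{\delta m}$ for $m\ge Cn^{2-1/m_2(H)}$ and $C$ large, such that every $H$-free graph $G$ on $[n]$ has $E(G)\subseteq S$ for some $S\in\S$, and each $S\in\S$ contains at most $\eta\cdot\#\{\text{copies of }H\text{ in }K_n\}$ copies of $H$. By the graph removal lemma, each such $S$ is within $\eta'n^2$ edges of an $H$-free graph (with $\eta'\to 0$ as $\eta\to 0$), so $|S|\le\ex(n,H)+\eta'n^2$. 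Applying the Erd{\H o}s--Simonovits stability theorem to the nearby $H$-free graph, I split $\S=\S_{\mathrm{small}}\cup\S_{\mathrm{good}}$, where each $S\in\S_{\mathrm{small}}$ satisfies $|S|\le\bigl(1-\tfrac{1}{\chi(H)-1}-\eps\bigr)\binom{n}{2}$ for some $\eps=\eps(\delta)>0$, and each $S\in\S_{\mathrm{good}}$ is itself $(\delta'',\chi(H)-1)$-partite for some $\delta''\ll\delta$.

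\textbf{Counting.} The number of $m$-edge graphs contained in some small container is at most $|\S|\binom{(1-\frac{1}{\chi(H)-1}-\eps)\binom{n}{2}}{m}$, which is $o\bigl(\binom{\ex(n,H)}{m}\bigr)$ once $C$ is large. For $S\in\S_{\mathrm{good}}$, let $\pi_S$ be a partition of $[n]$ witnessing partiteness and let $B_S\subseteq S$ be the set of at most $\delta''|S|$ edges not respected by $\pi_S$. If $G\subseteq S$ has $m$ edges and is not $(\delta,\chi(H)-1)$-partite, then $\pi_S$ in particular fails for $G$, forcing $|E(G)\cap B_S|\ge\delta m$. A hypergeometric-tail (Chernoff) estimate then bounds the number of such $G$ by $\binom{|S|}{m}2^{-c\delta m}$, where $c=c(\delta,\delta'')\to\infty$ as $\delta''/\delta\to 0$. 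Summing over $\S_{\mathrm{good}}$ yields $|\S|\binom{\ex(n,H)+\eta'n^2}{m}2^{-c\delta m}=o\bigl(\binom{\ex(n,H)}{m}\bigr)$ for $m\ge Cn^{2-1/m_2(H)}$, completing the argument.

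\textbf{Main obstacle.} The chief difficulty is the bookkeeping of quality parameters, which must be chosen in the order $\delta\gg\delta''\gg\eta'$ (so that removal plus stability yield a partition of $S$ with density loss much less than $\delta$), then $\eta$ small enough to make removal applicable, and finally $C$ large enough in Theorem~\ref{thm:main} both to absorb $\log|\S|$ into the $2^{-c\delta m}$ savings and to separate $|\S|\binom{(\pi(H)-\eps)\binom{n}{2}}{m}$ from $\binom{\ex(n,H)}{m}$. One must also verify that the container theorem applies with the relevant boundedness hypothesis for the $H$-copy hypergraph, which (exactly as in the proof of Theorem~\ref{thm:ErFrRo-sparse}) follows from the definition of $m_2(H)$.
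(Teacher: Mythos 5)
Your high-level plan is essentially the one the paper follows, just with the stability analysis applied to the containers after invoking Theorem~\ref{thm:main}, rather than encoded into the family $\F$ as in Theorem~\ref{thm:stability-counting}; both routes use Theorem~\ref{thm:main}, the removal lemma, Erd\H{o}s--Simonovits stability, and binomial estimates. However, there is a genuine error in your counting step. The number of fingerprints $|\S|$ is \emph{not} small compared to $2^{\delta m}$: fingerprints range over subsets of $E(K_n)$ of size up to roughly $C'n^{2-1/m_2(H)}$, so $|\S|$ can be as large as $\binom{n^2}{\Theta(n^{2-1/m_2(H)})}$, whose logarithm is $\Theta\big(n^{2-1/m_2(H)}\log n\big) = \Theta(m\log n)$. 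This exceeds $\delta m$ (indeed, any $cm$ with $c$ constant) once $n$ is large, and increasing $C$ cannot eliminate the $\log n$ factor. Consequently both of your product bounds, $|\S|\binom{(\pi(H)-\eps)\binom{n}{2}}{m}$ for the small containers and $|\S|\binom{\ex(n,H)+\eta'n^2}{m}2^{-c\delta m}$ for the good ones, diverge rather than being $o\big(\binom{\ex(n,H)}{m}\big)$.

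The missing ingredient is that the fingerprint $S=g(I)$ is \emph{contained in} $I$. Hence the number of bad $m$-sets with fingerprint $S$ is at most $\binom{|f(S)|}{m-|S|}$, not $\binom{|f(S)|}{m}$, and one should sum $\binom{n^2}{s}\binom{|f(S)|}{m-s}$ over fingerprint sizes $s\le\eps^2 m$ rather than use the crude bound $|\S|\cdot\max_S(\cdots)$. The discount $\binom{|f(S)|}{m-s}/\binom{|f(S)|}{m}\approx (m/|f(S)|)^s\approx p_n^s$ exactly cancels the growth of $\binom{n^2}{s}\approx(n^2/s)^s$, leaving a factor $\big(\Theta(m/s)\big)^s$ that is bounded by $(1-\eps')^{-m/2}$ when $s\le\eps^2 m$; this is precisely the bookkeeping in the paper's proofs of Theorems~\ref{thm:Sz}, \ref{thm:Sch-counting}, and \ref{thm:stability-counting}. (Two smaller slips: the auxiliary hypergraph of $H$-copies is $e(H)$-uniform, not $v(H)$-uniform, since its vertices are the $e(H)$ edges of each copy; and Theorem~\ref{thm:main} gives $E(G)\subseteq f(g(E(G)))\cup g(E(G))$, not containment in a single member of $\S$.)
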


As with Theorem~\ref{thm:ErFrRo-sparse}, we shall in fact deduce Theorem~\ref{thm:H-free-structure} from a `counting version' of the general transference theorem for stability results proved in~\cite{Sam}. Our version of it, Theorem~\ref{thm:stability-counting}, is stated and proved in Section~\ref{sec:stability-results}. In Section~\ref{sec:Turan-counting}, we use it to derive Theorem~\ref{thm:H-free-structure}. Once again, our proof does not use the regularity lemma, unlike that in~\cite{Lu}. Finally, we would like to mention that, as observed by \L uczak~\cite{Lu}, Theorem~\ref{thm:H-free-structure} has the following elegant corollary.

\begin{cor}
  \label{cor:H-free-prob}
  For every graph $H$  with $\chi(H) \ge 3$ and every positive $\eps$, there exist positive constants $C$  and $n_0$ such that the following holds. For every $n \in \N$ with $n \ge n_0$ and every $m \in \N$ with $Cn^{2-1/m_2(H)} \le m \le n^2/C$,
  \begin{equation}\label{eq:cor:luczak}
  \left(\frac{\chi(H) - 2}{\chi(H) - 1} - \eps\right)^m \le \, \Pr\big(G_{n,m} \nsupseteq H\big) \le \left(\frac{\chi(H) - 2}{\chi(H) - 1} + \eps\right)^m,
  \end{equation}
  where $G_{n,m}$ is a uniformly selected random $n$-vertex graph with $m$ edges.
\end{cor}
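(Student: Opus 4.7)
The plan is to read off both halves of~\eqref{eq:cor:luczak} as essentially immediate consequences of Theorem~\ref{thm:ErFrRo-sparse}, which is precisely the ``counting'' companion that Corollary~\ref{cor:H-free-prob} was built on. Writing $r = \chi(H)-1 \ge 2$ and $M = \binom{n}{2}$, and noting that $\Pr(G_{n,m} \nsupseteq H) = f_{n,m}(H)/\binom{M}{m}$, the whole argument reduces to combining Theorem~\ref{thm:ErFrRo-sparse} with the Erd{\H o}s--Stone estimate $\ex(n,H) = ((r-1)/r + o(1))M$ and the elementary ratio sandwich
\[
\left(\frac{N-m}{M}\right)^m \le \frac{\binom{N}{m}}{\binom{M}{m}} \le \left(\frac{N}{M}\right)^m \qquad (0 \le m \le N \le M),
\]
which follows from the monotonicity in $i$ of each factor $(N-i)/(M-i)$ in the product expansion of the middle ratio.

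For the upper bound, given $\eps > 0$ I would choose $\delta = \eps/8$ and apply Theorem~\ref{thm:ErFrRo-sparse} to obtain a constant $C_1 = C_1(\delta)$ with $f_{n,m}(H) \le \binom{\ex(n,H) + \delta n^2}{m}$ whenever $m \ge C_1 n^{2-1/m_2(H)}$. Erd{\H o}s--Stone together with $n^2 = 2M + O(n)$ gives $\ex(n,H) + \delta n^2 \le ((r-1)/r + \eps/2)M$ for $n$ large, so the right-hand side of the sandwich yields
\[
\Pr(G_{n,m} \nsupseteq H) \le \left(\frac{r-1}{r} + \frac{\eps}{2}\right)^m \le \left(\frac{\chi(H)-2}{\chi(H)-1} + \eps\right)^m.
\]
For the lower bound, Theorem~\ref{thm:ErFrRo-sparse} also supplies $f_{n,m}(H) \ge \binom{\ex(n,H)}{m}$ (equivalently, every $m$-edge subgraph of the Tur\'an graph $T_r(n)$ is $H$-free, since $T_r(n)$ is $r$-colourable and $\chi(H) = r+1$). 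Applying the left-hand side of the sandwich with $N = \ex(n,H)$, using the hypothesis $m \le n^2/C$ (whence $m/M \le 2/C + o(1)$), and Erd{\H o}s--Stone once more, I would obtain
\[
\Pr(G_{n,m} \nsupseteq H) \ge \left(\frac{r-1}{r} - \frac{2}{C} - o(1)\right)^m.
\]

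All that remains is bookkeeping. I fix the parameters in the order $\eps \to \delta := \eps/8 \to C := \max\{C_1(\delta),\, 8/\eps\} \to n_0 = n_0(\eps)$, so that the hypothesis of Theorem~\ref{thm:ErFrRo-sparse} is met, the term $2/C$ in the lower bound is at most $\eps/4$, and $n_0$ is large enough that every $o(1)$ contribution (from Erd{\H o}s--Stone and from the discrepancy between $n^2$ and $2M$) is dominated by $\eps/4$ uniformly in $m$ across the full range $Cn^{2-1/m_2(H)} \le m \le n^2/C$. There is essentially no conceptual obstacle: the main content has been packed into Theorem~\ref{thm:ErFrRo-sparse}, and the only care required is to ensure the error terms remain uniform throughout the allowed range of $m$.
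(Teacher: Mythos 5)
Your proof is correct, and the calculation is tidy: the sandwich $\left(\frac{N-m}{M}\right)^m \le \binom{N}{m}/\binom{M}{m} \le (N/M)^m$ (valid for $0 \le N \le M$), together with the two bounds on $f_{n,m}(H)$ from Theorem~\ref{thm:ErFrRo-sparse} and the Erd\H{o}s--Stone estimate $\ex(n,H) = \big(\tfrac{r-1}{r}+o(1)\big)\binom{n}{2}$, immediately yields both halves of~\eqref{eq:cor:luczak} with the parameter ordering $\eps \to \delta \to C \to n_0$ that you specify. (The only unstated caveat is that one needs $\ex(n,H)+\delta n^2 \le \binom{n}{2}$ for the sandwich to point the right way, which holds for $n$ large once $\eps$, hence $\delta$, is taken small; this is harmless since for $\eps \ge 1/(\chi(H)-1)$ the upper bound in~\eqref{eq:cor:luczak} is trivially at least $1$.)

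Worth noting: the paper does not give a proof of this corollary but attributes it, following \L uczak, to Theorem~\ref{thm:H-free-structure} (the structural statement that almost all $m$-edge $H$-free graphs are $(\delta,\chi(H)-1)$-partite). You instead derive it directly from the counting bound of Theorem~\ref{thm:ErFrRo-sparse}, which is in fact the more economical route: the corollary only requires upper and lower bounds on $f_{n,m}(H)$, not the finer structural description of a typical $H$-free graph. Your proof is therefore not merely correct but, if anything, simpler than the one implicitly referenced in the paper.
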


Note that~\eqref{eq:cor:luczak} does not hold if $m$ is too large; for example, if $m > n^2/4$ then $\Pr\big(G_{n,m} \nsupseteq K_3 \big) = 0$. We remark that a great deal more is known about the structure of a typical $H$-free graph (drawn uniformly at random from the set of all $n$-vertex $H$-free graphs), see~\cite{BaBoSi09} and the references therein for more details. 

\subsection{The K\L R conjecture}

\label{sec:KLR-intro}

The celebrated Szemer{\'e}di regularity lemma~\cite{Sz78}, which is considered to be one of the most important and powerful tools in extremal graph theory, says that the vertex set of every graph may be divided into a bounded number of parts of approximately the same size in such a way that most of the bipartite subgraphs induced between pairs of parts of the partition satisfy a certain pseudo-randomness condition termed \emph{$\eps$-regularity}. The strength of the regularity lemma lies in the fact that it may be combined with the so-called \emph{embedding lemma} to show that a graph contains particular subgraphs. The combination of the regularity and embedding lemmas allows one to prove many well-known theorems in extremal graph theory, such as the theorem of Erd{\H o}s and Stone~\cite{ErSt} and the stability theorem of Erd{\H o}s and Simonovits~\cite{ES1,ES2}, both mentioned in Section~\ref{sec:turan-problem}.

For sparse graphs, that is, $n$-vertex graphs with $o(n^2)$ edges, the original version of the regularity lemma is vacuous since if the vertex set of a sparse graph is partitioned into a bounded number of parts, then all induced bipartite subgraphs thus obtained are trivially $\eps$-regular, provided that $n$ is sufficiently large. However, it was independently observed by Kohayakawa~\cite{Ko97} and R{\"o}dl (unpublished) that the notion of $\eps$-regularity may be extended in a meaningful way to graphs with density tending to zero. Moreover, with this more general notion of regularity, they were also able to prove an associated regularity lemma which applies to a large class of sparse graphs, including (a.a.s.) the random graph $G(n,p)$.

Given a $p \in [0,1]$ and a positive $\eps$, we say that a bipartite graph between sets $V_1$ and $V_2$ is \emph{$(\eps,p)$-regular} if for every $W_1 \subseteq V_1$ and $W_2 \subseteq V_2$ with $|W_1| \ge \eps|V_1|$ and $|W_2| \ge \eps |V_2|$, the density $d(W_1, W_2)$ of edges between $W_1$ and $W_2$ satisfies
\[
\big| d(W_1, W_2) - d(V_1, V_2) \big| \le \eps p.
\]
A partition of the vertex set of a graph into $r$ parts $V_1, \ldots, V_r$ is said to be $(\eps, p)$-regular if $\big| |V_i| - |V_j| \big| \le 1$ for all $i$ and $j$ and for all but at most $\eps r^2$ pairs $(V_i, V_j)$, the graph induced between $V_i$ and $V_j$ is $(\eps, p)$-regular. The class of graphs to which the Kohayakawa-R{\"o}dl regularity lemma applies are the so-called upper-uniform graphs. Given positive $\eta$ and $K$, we say that an $n$-vertex graph $G$ is \emph{$(\eta, p, K)$-upper-uniform} if for all $W \subseteq V(G)$ with $|W| \ge \eta n$, the density of edges within $W$ satisfies $d(W) \le Kp$. This condition is satisfied by many natural classes of graphs, including (a.a.s.) all subgraphs of random graphs of density $p$. The sparse regularity lemma of Kohayakawa~\cite{Ko97} and R{\"o}dl says the following.

\begin{SpSzRL}
  For all positive $\eps$, $K$, and $r_0$, there exist a positive constant $\eta$ and an integer $R$ such that for every $p \in [0,1]$, the following holds. Every $(\eps,p,K)$-upper-uniform graph with at least $r_0$ vertices admits an $(\eps,p)$-regular partition of its vertex set into $r$ parts, for some $r \in \{r_0, \ldots, R\}$.
\end{SpSzRL}

We remark that a version of this theorem avoiding the need for the upper-uniformity assumption was recently proved by Scott~\cite{Sc11}.

The aforementioned embedding lemma roughly says that if we start with an arbitrary graph $H$, replace its vertices by large independent sets and its edges by $\eps$-regular bipartite graphs with density much larger than $\eps$, then this blown-up graph will contain a copy of $H$. To make it more precise, let $H$ be a graph on the vertex set $\{1, \ldots, v(H)\}$, let $\eps$ and $p$ be as above, and let $n$ and $m$ be integers satisfying $0 \le m \le n^2$. Let us denote by $\G(H,n,m,p,\eps)$ the collection of all graphs $G$ constructed in the following way. The vertex set of $G$ is a disjoint union $V_1 \cup \ldots \cup V_{v(H)}$ of sets of size $n$, one for each vertex of $H$. For each edge $\{i,j\}$ of $H$, we add to $G$ an $(\eps,p)$-regular bipartite graph with $m$ edges between the sets $V_i$ and $V_j$. These are the only edges of $G$. With this notation in hand, we can state the embedding lemma. Given any graph $G$ as above, we define \emph{canonical copies of $H$} to be all copies of $H$ in $G$ in which (the image of) each vertex $i \in V(H)$ lies in the set $V_i \subseteq V(G)$.

\begin{EL}
  For every graph $H$ and every positive $d$, there exist a positive $\eps$ and an integer $n_0$ such that for every $n$ and $m$ with $n \ge n_0$ and $m \ge dn^2$, every $G \in \G(H,n,m,1,\eps)$ contains a canonical copy of $H$.
\end{EL}

One might hope that a similar statement holds when one replaces $1$ by an arbitrary $p$ and the assumption $m \ge dn^2$ by $m \ge pdn^2$, even if $p$ is a decreasing function of $n$. However, for an arbitrary function $p$, this is too much to hope for. Indeed, consider the random `blow-up' of $H$, that is, the random graph $G$ obtained from $H$ by replacing each vertex of $H$ by an independent set of size $n$ and each edge of $H$ by a random bipartite graph with $pn^2$ edges. With high probability, the number of canonical copies of $H$ in $G$ will be about $p^{e(H)}n^{v(H)}$ and hence if $p^{e(H)}n^{v(H)} \ll pn^2$, then one can remove all copies of $H$ from $G$ by deleting a tiny proportion of all edges. Since in the above argument one may replace $H$ with an arbitrary subgraph $H' \subseteq H$, it follows easily\footnote{Note that we also replace $p$ with some $p' = (1 + o(1))p$, and that the removal of $o(pn^2)$ edges does not affect the $\eps$-regularity conditions.} that if $p \ll n^{-1/m_2(H)}$, then there are graphs in $\G(H,n,pn^2,p,\eps)$ that do not contain any canonical copies of~$H$.

\enlargethispage{\baselineskip}

As in the case of Tur{\'a}n's theorem for random graphs, see Section~\ref{sec:turan-problem}, one might still hope that if $p \ge Cn^{-1/m_2(H)}$ for some large constant $C$, then the natural sparse analogue of the embedding lemma discussed above holds. However, it was observed by \L uczak (see~\cite{GeSt,KoRo}) that, somewhat surprisingly, for any graph $H$ which contains a cycle and any function $p$ satisfying $p = o(1)$, there are graphs in $\G(H,n,pn^2,p,\eps)$ with no canonical copy of $H$. Nevertheless, it still seemed likely that such atypical graphs comprise so tiny a proportion of $\G(H,n,m,p,\eps)$ that they do not appear in $G(n,p)$ asymptotically almost surely. 

This was formalized in the following conjecture of Kohayakawa, \L uczak, and R{\"o}dl~\cite{KLR97}, usually referred to as the \emph{K\L R conjecture}. Given a graph $H$, integers $m$ and $n$, a $p \in [0,1]$, and a positive $\eps$, let $\G^*(H,n,m,p,\eps)$ denote the collection of graphs in $\G(H,n,m,p,\eps)$ that contain no canonical copy of $H$. We will prove the conjecture in Section~\ref{sec:KLR}.

\begin{thm}[The K\L R conjecture]\label{thm:KLR}
 For every graph $H$ and every positive $\beta$, there exist positive constants $C$, $n_0$, and $\eps$ such that the following holds. For every $n \in \N$ with $n \ge n_0$ and $m \in \N$ with $m \ge Cn^{2-1/m_2(H)}$, 
  \[
  \big| \G^*(H,n,m,m/n^2,\eps) \big| \le \beta^m \binom{n^2}{m}^{e(H)}.
  \]
\end{thm}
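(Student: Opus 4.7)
My plan is to deduce Theorem~\ref{thm:KLR} from the main container theorem (Theorem~\ref{thm:main}) applied to a carefully chosen auxiliary hypergraph, together with a supersaturation argument. In fact the proof will yield a somewhat stronger statement: the same bound will hold for the count of \emph{all} $H$-free blow-ups with $m$ edges per pair, without using the $(\eps, m/n^2)$-regularity hypothesis.

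\textbf{Setup.} Let $\HH$ be the $e(H)$-uniform hypergraph on vertex set
\[
  V(\HH) \;=\; \bigsqcup_{ij \in E(H)} (V_i \times V_j), \qquad |V(\HH)| = e(H) n^2,
\]
whose edges are in bijection with the canonical copies of $H$ in the complete blow-up $K_{V_1,\dots,V_{v(H)}}$, each canonical copy contributing one $e(H)$-element edge. Then $\I(\HH)$ is precisely the family of $H$-free subgraphs of the blow-up, so $\G^*(H,n,m,m/n^2,\eps) \subseteq \I(\HH)$, and it suffices to bound the number of independent sets of $\HH$ with exactly $m$ edges in each slice.

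\textbf{Verifying the hypothesis.} For every subgraph $H' \subseteq H$ with $\ell$ edges, the $\ell$-th maximum co-degree in $\HH$ from an $\ell$-tuple of vertices that forms a copy of $H'$ is at most $n^{v(H) - v(H')}$. Using the definition $m_2(H) = \max_{H' \subseteq H}(e(H')-1)/(v(H')-2)$ in~\eqref{eq:m2H}, a direct computation shows that the assumption $m \ge C n^{2 - 1/m_2(H)}$ is precisely what is needed to verify the co-degree hypothesis of Theorem~\ref{thm:main} at scale $p = m/n^2$. The theorem therefore provides, for any small $\eta > 0$ of our choosing, a family $\F$ of container sets $F \subseteq V(\HH)$ with:
(i) every $I \in \I(\HH)$ contained in some $F \in \F$;
(ii) each $F \in \F$ containing at most $\eta n^{v(H)}$ canonical copies of $H$; and
(iii) $\log|\F|$ controllably linear in $m$.

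\textbf{Supersaturation and counting.} Writing $F_{ij} := F \cap (V_i \times V_j)$, a \emph{balanced supersaturation} statement — in the spirit of Morris and Samotij — forces that if $F$ contains at most $\eta n^{v(H)}$ canonical copies of $H$, then some suitable geometric combination of the slice densities $|F_{ij}|/n^2$ is at most $\beta' = \beta'(\eta, H)$, with $\beta' \to 0$ as $\eta \to 0$. Using the elementary bound $\binom{\rho n^2}{m} \le \rho^m \binom{n^2}{m}$ slicewise, the number of $m$-edge blow-ups contained in $F$ is therefore at most $(\beta')^m \binom{n^2}{m}^{e(H)}$, and so
\[
  \big|\G^*(H,n,m,m/n^2,\eps)\big| \;\le\; |\F| \cdot (\beta')^m \binom{n^2}{m}^{e(H)}.
\]
Choosing $\eta$ small enough that $|\F| \cdot (\beta')^m \le \beta^m$ finishes the proof.

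\textbf{Main obstacle.} The delicate step is the balanced supersaturation: a naive Kruskal--Katona or triangle-removal-type argument only forces slice densities to be bounded away from $1$, not to shrink to $0$. To force $\beta'$ arbitrarily small one needs a quantitative lower bound for canonical $H$-copies in arbitrary subgraphs of the blow-up in terms of their slice densities, robust enough to permit an iterative container construction: applying Theorem~\ref{thm:main} inside each container to obtain sparser sub-containers, while keeping $|\F|$ under control. Balancing the two effects against one another (each container sparsified by a factor $<1$, at the cost of multiplying $|\F|$ by a polynomial factor in $1/p$, repeated $O(\log(1/\beta))$ times) is the main technical content of the argument and is carried out in Section~\ref{sec:KLR}.
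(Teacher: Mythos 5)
Your setup of the auxiliary hypergraph $\HH$ and the verification of the co-degree conditions via Proposition~\ref{prop:Delta-bal-hyp} match the paper, and applying Theorem~\ref{thm:main} with $p = m/n^2$ is indeed the right move. However, the proof then goes wrong, and in a way that you partly anticipate but do not resolve: you announce a ``stronger statement'' in which the $(\eps, m/n^2)$-regularity hypothesis is dropped, and the entire counting step is built around forcing the slice densities of containers to be uniformly small. This strengthening is false, and consequently the ``balanced supersaturation'' you invoke cannot hold in the form you need. Take $H = K_3$ and partition $V_1 = A_1 \cup (V_1 \setminus A_1)$ with $|A_1| = n/2$. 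Any choice of $m$ edges inside $A_1 \times V_2$, any $m$ edges inside $(V_1 \setminus A_1) \times V_3$, and any $m$ edges between $V_2$ and $V_3$ yields a triangle-free subgraph of the blow-up with $m$ edges in each slice, and there are $\binom{n^2/2}{m}^2 \binom{n^2}{m} \approx 4^{-m}\binom{n^2}{m}^3$ such graphs. So no family $\F$ of containers for $\I(\HH)$, however constructed and however many times you iterate Theorem~\ref{thm:main} inside the containers, can consist only of graphs whose slice densities are all $\le \beta'$ with $\beta'$ arbitrarily small: some container must contain (essentially all of) $A_1 \times V_2$, which has density $1/2$ in that slice. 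Iterating cannot shrink this because the dense sub-blow-up structure is itself $H$-free.

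What the paper does instead is crucially weaker on the supersaturation side and crucially exploits regularity on the counting side. Lemma~\ref{lemma:hole} shows that a subgraph of the blow-up containing few canonical copies of $H$ must have a \emph{hole}: a single pair of reasonably large sets $A_i \subseteq V_i$, $A_j \subseteq V_j$ with very low density between them (not small density in every slice). The family $\F$ is defined as the graphs with no such hole, so each container $f(S) \in \ol{\F}$ has a hole. Lemma~\ref{lemma:holecount} then shows that a bipartite graph with a hole of low density contains only a $\beta^m$-fraction of the $(\eps, m/n^2)$-regular bipartite graphs with $m$ edges, precisely because a regular graph cannot have a dense hole: the regularity hypothesis forces any $(\eps,p)$-regular $G$ inside the container to put $\ge (1-\eps')p|A_i||A_j|$ of its edges into a region with at most $\delta(\alpha) |A_i||A_j|$ available positions, which is combinatorially restrictive when $\delta(\alpha)$ is small. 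This is where the regularity condition is used, and it is irreplaceable: the graphs in my counterexample are not $(\eps,p)$-regular for small $\eps$. You should replace the balanced supersaturation step with Lemma~\ref{lemma:hole}, define $\F$ accordingly, and then count $(\eps, m/n^2)$-regular subgraphs of the holed containers via an argument along the lines of Lemma~\ref{lemma:holecount}.
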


The K\L R conjecture has been one of the central open questions in extremal graph theory and has attracted substantial attention from many researchers over the past fifteen years. It has been verified in several special cases. It is easy to see that it holds for all graphs $H$ which do not contain a cycle. The cases $H = K_3$, $K_4$, and $K_5$ were resolved in~\cite{KLR96}, \cite{GePrScStTa}, and~\cite{GeScSt}, respectively. The case $H = C_\ell$ has also been resolved, but here the history is somewhat more complex. A proof under some extra technical assumptions was given in~\cite{KoKr}. Those extra assumptions were later removed in~\cite{GeKoRoSt} and, independently, in \cite{Be}. We remark here that in parallel to this work, Conlon, Gowers, Samotij, and Schacht~\cite{CoGoSaSc} have proved a sparse analogue of the \emph{counting lemma} for subgraphs of the random graph $G(n,p)$, which may be viewed as a version of the K\L R conjecture that is stronger in some aspects and weaker in other aspects. 

It is well-known that Theorem~\ref{thm:KLR} easily implies Tur{\'a}n's theorem for random graphs, Theorem~\ref{thm:Turan-Gnp}, and also its stability version, Theorem~\ref{thm:stability-Gnp}. In fact, this was the original motivation behind the K\L R conjecture, see~\cite{KLR97}. Moreover, it was proved by \L uczak~\cite{Lu} that Theorem~\ref{thm:KLR} implies Theorems~\ref{thm:ErFrRo-sparse} and~\ref{thm:H-free-structure}. The work of Conlon and Gowers~\cite{CG} and Schacht~\cite{Sch} (see also~\cite{Sam}), as well as this work, have shown that one does not need to appeal to the sparse regularity lemma and to the K\L R conjecture in order to prove such extremal statements in random graphs. Nevertheless, there are still many beautiful corollaries of the conjecture that cannot (yet) be proved by other means. For discussion and derivation of some of them, we refer the reader to~\cite{CoGoSaSc}. Here, we present only one corollary of the K\L R conjecture, the threshold for asymmetric Ramsey properties of random graphs, which does not follow from the version of the conjecture proved in~\cite{CoGoSaSc}. The deduction of this result from the K\L R conjecture is essentially due to Kohayakawa and Kreuter~\cite{KoKr}. 

\subsection{Ramsey properties of random graphs}

Let $H$ be a fixed graph and let $r$ be a positive integer. For an arbitrary graph $G$, we write $G \to (H)_r$ if every $r$-coloring of the edges of $G$ contains a monochromatic copy of $H$. It follows from the classical result of Ramsey~\cite{Ra} that $K_n \to (H)_r$, provided that $n$ is sufficiently large. Ramsey properties of random graphs were first investigated by Frankl and R{\"o}dl~\cite{FrRo} and since then much effort has been devoted to their study. Most notably, R\"odl and Ruci\'nski~\cite{RoRu93, RoRu95} established the following general threshold result.

\begin{thm}
  \label{thm:RoRu}
    For every graph $H$ that is not a forest, and every positive integer $r$, there exist positive constants $c$ and $C$ such that
  \[
  \lim_{n \to \infty} \Pr\big( G(n,p_n) \to (H)_r \big) =
  \begin{cases}
    1 & \text{if $p_n \ge Cn^{-1/m_2(H)}$}, \\
    0 & \text{if $p_n \le cn^{-1/m_2(H)}$}.
  \end{cases}
  \]
\end{thm}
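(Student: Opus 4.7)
My plan is to split Theorem~\ref{thm:RoRu} into the \emph{$0$-statement} and the \emph{$1$-statement} and tackle the latter, which is by far the deeper direction, using the K\L R conjecture (Theorem~\ref{thm:KLR}) together with the sparse Szemer\'edi regularity lemma. The $0$-statement, that $\Pr\bigl(G(n,p_n)\to(H)_r\bigr)\to 0$ when $p_n \le cn^{-1/m_2(H)}$, follows from a standard deletion and Lov\'asz Local Lemma argument due to R\"odl and Ruci\'nski~\cite{RoRu93}: for such $p_n$ the number of copies of $H$ is typically only of order $p_n n^2$, a negligible fraction of the edges, and a uniformly random $r$-colouring admits a local modification that destroys every monochromatic copy. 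I would simply cite this.

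For the $1$-statement, fix $M \ge R_r(H)$ and assume $p_n \ge Cn^{-1/m_2(H)}$ for a large constant $C$. Since $G := G(n,p_n)$ is a.a.s.\ $(\eta, p_n, K)$-upper-uniform, one may apply the sparse regularity lemma to the $r$-coloured multigraph obtained from any fixed colouring of $E(G)$, producing a partition $V_1,\dots,V_{M'}$ with $M' \ge M$ that is $(\eps,p_n)$-regular in every colour class simultaneously. Between almost every pair $(V_i,V_j)$, the density in $G$ is close to $p_n$ and at least one colour $c_{ij} \in [r]$ contributes density at least $p_n/(2r)$; being a constant fraction of an $(\eps,p_n)$-regular pair, the corresponding colour slice is itself $(2r\eps, p_n/(2r))$-regular. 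Form a reduced $r$-coloured multigraph on $[M']$ by assigning $c_{ij}$ to $\{i,j\}$; since $M' \ge R_r(H)$, it contains a monochromatic copy of $H$, whose preimage is a blow-up $\Gamma \in \G(H, n', m, p_n/(2r), \eps')$ with $n' \approx n/M'$ and $m \ge p_n(n')^2/(2r)$, all of whose edges lie in a single colour class.

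The crux is to rule out the possibility that $\Gamma \in \G^*(H,n',m,p_n/(2r),\eps')$ for \emph{any} choice of colouring. I would apply Theorem~\ref{thm:KLR} with a small $\beta$: between fixed part-tuples the number of bad blow-ups is at most $\beta^m \binom{(n')^2}{m}^{e(H)}$, and each such blow-up appears as a subgraph of $G$ with probability $p_n^{m\,e(H)}$. Using $\binom{(n')^2}{m} p_n^m \le (ep_n(n')^2/m)^m = (2er)^m$ and taking the union bound over the at most $(M')^n$ partitions, the $(M')^{v(H)}$ embeddings of $H$ into $[M']$, and the $r$ choices of dominant colour, the expected number of bad blow-ups appearing in $G$ is at most
\[
(M')^{n+v(H)}\cdot r \cdot \bigl(\beta(2er)^{e(H)}\bigr)^m \;=\; 2^{O(n)} \cdot \bigl(\beta(2er)^{e(H)}\bigr)^{\Omega(p_n n^2/(M')^2)},
\]
which is $o(1)$ once $\beta$ is chosen sufficiently small (depending only on $r$, $H$, $M'$) and $C$ is large enough to ensure $p_n n^2/(M')^2 \gg n$ (which holds because $m_2(H) > 1$ for every non-forest $H$). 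By Markov's inequality, no bad blow-up appears in $G$ a.a.s., so the monochromatic $\Gamma$ furnished by the Ramsey step must contain a canonical copy of $H$, and we conclude $G \to (H)_r$.

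The main obstacle is coupling the regularity required by Theorem~\ref{thm:KLR} with an adversarial $r$-colouring: a single colour slice of an $(\eps,p_n)$-regular pair need not itself be regular, so one is forced to apply the sparse regularity lemma to the multicoloured graph, and then to verify that a dominant colour slice of density $\ge p_n/(2r)$ inherits $(O(\eps),p_n/(2r))$-regularity. Matching the regularity parameters with those demanded by Theorem~\ref{thm:KLR} and tracking the constants through the union bound is the main piece of technical bookkeeping.
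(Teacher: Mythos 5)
The paper does not prove Theorem~\ref{thm:RoRu}; it is quoted as background, with credit to R\"odl and Ruci\'nski~\cite{RoRu93, RoRu95}. The contribution of the paper in this direction is Theorem~\ref{thm:Ramsey-Gnp}, the \emph{asymmetric} $1$-statement, which the authors observe follows from their Theorem~\ref{thm:KLR} via the arguments of Kohayakawa--Kreuter~\cite{KoKr} and Marciniszyn--Skokan--Sp\"ohel--Steger~\cite[Section~4]{MaSkSpSt}; no proof is written out even for that case. So there is no ``paper's own proof'' to compare against.

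Your proposal, however, is a legitimate derivation of the $1$-statement from Theorem~\ref{thm:KLR}, and it is essentially the route that~\cite{KoKr} and~\cite{MaSkSpSt} follow (specialised to the symmetric setting, which is cleaner since $m_2(H,H)=m_2(H)$ and no strict $2$-balancedness is needed). The structure is right: multicolour sparse regularity, a Ramsey step in the reduced graph, and then a union bound showing that a.a.s.\ $G(n,p)$ contains no copy of any member of $\G^*$ aligned with any partition. Your accounting of the union bound is correct in spirit: with $m \ge C n^{2-1/m_2(H)}/(2r(M')^2)$ and $m_2(H)>1$ one has $m \gg n$, so $\beta^{m}$ (after absorbing the $(2er)^{e(H) m}$ factor) beats the $(M')^{n}$ partitions. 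Two bits of bookkeeping you gloss over deserve mention. First, the reduced graph on $[M']$ is not complete: up to $\eps (M')^2$ pairs may be irregular, so $M'$ must be taken large enough that the regular-pair graph still contains a clique of size $R_r(H)$, which forces $r_0$ in the regularity lemma to exceed $R_r(H)$ by a factor depending on $\eps$. Second, the edge counts of the dominant-colour slices across different edges of the monochromatic copy of $H$ are only approximately equal; to land squarely in the hypotheses of Theorem~\ref{thm:KLR} one uniformizes by deleting edges (which preserves $(\eps',p)$-regularity up to a small loss). Neither issue is a gap in substance, just detail. The $0$-statement is, as you say, independent of all this and is R\"odl--Ruci\'nski's deletion argument.
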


In the above discussion, a copy of the same graph $H$ is forbidden in each of the $r$ color classes. A natural generalization of Theorem~\ref{thm:RoRu} would determine thresholds for so-called \emph{asymmetric Ramsey properties}. For any graphs $G$, $H_1, \ldots, H_r$, we write $G \to (H_1, \ldots, H_r)$ if for every coloring of the edges of $G$ with colors $1, \ldots, r$, there exists, for some $i \in [r]$, a copy of $H_i$ all of whose edges have color $i$. In the context of asymmetric Ramsey properties of random graphs, the following generalization of the $2$-density $m_2(\cdot)$ was introduced in~\cite{KoKr}. For two graphs $H_1$ and $H_2$, define\footnote{To motivate this definition, set $p = n^{-1 / m_2(H_1,H_2)}$ and observe that the edges of $G(n,p)$ which are contained in a copy of each subgraph $H_1' \subseteq H_1$ have density roughly $n^{-1/m_2(H_2)}$.} 
\begin{equation}
  \label{eq:m2H2H1}
  m_2(H_1, H_2) = \max \left\{ \frac{e(H_1') }{v(H_1') - 2 + 1/m_2(H_2)} \,\colon H_1' \subseteq H_1 \text{ with } v(H_1') \ge 3 \right\}.
\end{equation}
Kohayakawa and Kreuter~\cite{KoKr} formulated the following conjecture and proved it in the case when all $H_i$ are cycles.

\begin{conj}
  \label{conj:KoKr}
  Let $H_1, \ldots, H_r$ be graphs with $1 < m_2(H_r) \le \ldots \le m_2(H_1)$. Then there exist constants $c$ and $C$ such that
  \[
  \lim_{n \to \infty} \Pr\big( G(n,p_n) \to (H_1, \ldots, H_r) \big) =
  \begin{cases}
    1 & \text{if $p_n \ge Cn^{-1/m_2(H_1, H_2)}$}, \\
    0 & \text{if $p_n \le cn^{-1/m_2(H_1, H_2)}$}.
  \end{cases}
  \]
\end{conj}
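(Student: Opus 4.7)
The conjecture has two halves, and I would attack them separately. The $0$-statement, asserting that $G(n,p_n) \not\to (H_1,\ldots,H_r)$ a.a.s.\ for $p_n \le c n^{-1/m_2(H_1,H_2)}$, is the classical direction and does not require our main theorem; it is handled by an explicit deletion argument in the spirit of Marciniszyn--Skokan--Sp\"ohel--Steger. Concretely, for $r=2$, I would color each edge of $G(n,p)$ red with probability $q$ and blue with probability $1-q$, where $q \in (0,1)$ is chosen so that $pq \sim c' n^{-1/m_2(H_1)}$; this is possible because $m_2(H_2) \le m_2(H_1)$. The red subgraph is distributed as $G(n,pq)$ and lies below the symmetric $H_1$-threshold, so a.a.s.\ its number of $H_1$-copies is $o(pqn^2)$. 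A direct first-moment calculation on the extremal $H_1' \subseteq H_1$ attaining the maximum in~\eqref{eq:m2H2H1} shows that the expected number of $H_2$-copies in the blue part is likewise $o(pn^2)$. Deleting one edge per monochromatic copy from each side then yields a valid two-coloring a.a.s., and the case $r \ge 3$ follows by merging colors.

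\textbf{The $1$-statement via K\L R.} For the upper bound the plan is to follow the deduction of Kohayakawa and Kreuter~\cite{KoKr}, using Theorem~\ref{thm:KLR} in place of their conjectural embedding input. Fix $p \ge C n^{-1/m_2(H_1,H_2)}$ and an arbitrary coloring $E(G(n,p)) = E_1 \sqcup \cdots \sqcup E_r$. I would first apply the sparse Szemer\'edi regularity lemma to obtain a common $(\eps,p)$-regular partition $V_1,\ldots,V_t$ for all $r$ color classes simultaneously; a.a.s.\ $G(n,p)$ is $(\eta,p,K)$-upper-uniform, so the hypotheses hold. For each regular pair $(V_i,V_j)$ and each color $k$, record the color-$k$ density $d_{ij}^{(k)}$, and build a colored reduced multigraph on $[t]$ in which the edge $\{i,j\}$ carries color $k$ whenever $d_{ij}^{(k)} \ge \eta p$. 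The key combinatorial step is an asymmetric Ramsey argument on the reduced multigraph, producing for some $i \in [r]$ a copy of $H_i$ whose pairs all carry color $i$ with color-$i$ density at least a constant times $n^{-1/m_2(H_i)}$; this is where the balancing encoded in the definition~\eqref{eq:m2H2H1} enters. Finally, Theorem~\ref{thm:KLR}, applied with $H = H_i$ and with $m/n^2 = d_{ij}^{(i)}$, shows that the proportion of graphs in $\G(H_i,\cdot,\cdot,\cdot,\eps)$ lacking a canonical copy of $H_i$ is negligible compared with the probability that $G(n,p)$ realizes such a blow-up, so a canonical, hence monochromatic, copy of $H_i$ is present a.a.s.

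\textbf{Main obstacle.} The principal difficulty is the asymmetric balancing of regularity densities across the $r$ color classes. Theorem~\ref{thm:KLR} is a statement at a \emph{fixed} density $m/n^2$, whereas here one must simultaneously control $r$ densities $d_{ij}^{(k)}$ that can vary widely from pair to pair of the reduced graph. Kohayakawa and Kreuter's insight is that one does not need every color to be dense on the same regular pairs at the same time: it suffices that some single color $i$ be uniformly dense on some copy of $H_i$ in the reduced graph. Formulating this as a clean Ramsey-type statement, and verifying that the choice $p \ge Cn^{-1/m_2(H_1,H_2)}$ is precisely what makes the calculation close via~\eqref{eq:m2H2H1}, is the main technical step and the place where care is needed with the hierarchy of constants (choosing $\eps$, $\eta$, and the regularity parameters in the right order).
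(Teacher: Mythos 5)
The statement you have set out to prove is labeled a \emph{conjecture} in the paper, and the paper does not prove it --- neither direction. What the paper actually establishes is only Theorem~\ref{thm:Ramsey-Gnp}, the $1$-statement of Conjecture~\ref{conj:KoKr} under the extra hypothesis that $H_1$ is \emph{strictly $2$-balanced}, and even that is not carried out in the paper: Theorem~\ref{thm:KLR} is supplied as the missing probabilistic embedding input, while the deduction of Theorem~\ref{thm:Ramsey-Gnp} is deferred to~\cite{KoKr} and~\cite[Section~4]{MaSkSpSt}. Both the $0$-statement and the general $1$-statement of Conjecture~\ref{conj:KoKr} are left open. Your proposal therefore claims substantially more than the paper does, and there is no proof in the paper to compare it against.

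Concerning your $1$-statement sketch: the regularity-plus-K\L R route you describe (common $(\eps,p)$-regular partition for all colour classes, a colored reduced graph, an asymmetric Ramsey argument on the reduced graph, then Theorem~\ref{thm:KLR} to find a canonical copy of $H_i$) is exactly the Kohayakawa--Kreuter / Marciniszyn--Skokan--Sp\"ohel--Steger route that the paper cites. But as implemented in~\cite{MaSkSpSt}, that route requires $H_1$ strictly $2$-balanced: the quantity $m_2(H_1,H_2)$ in~\eqref{eq:m2H2H1} is a maximum over subgraphs $H_1'\subseteq H_1$, and without strict $2$-balancedness the relevant density at which one must invoke Theorem~\ref{thm:KLR} is not controlled by $H_1$ itself. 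Your outline does not engage with this; even if filled in correctly it would reproduce Theorem~\ref{thm:Ramsey-Gnp}, not the $1$-statement of the conjecture.

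Concerning your $0$-statement sketch: the deletion argument as described does not go through. You pick $q$ so that $pq \sim c' n^{-1/m_2(H_1)}$ and assert that, being ``below'' the symmetric $H_1$-threshold, $G(n,pq)$ has $o(pqn^2)$ copies of $H_1$ a.a.s. This is false: if $H_1'$ attains $m_2(H_1)$ and $q' = c' n^{-1/m_2(H_1)}$, then
\[
\Ex\big[\#\text{copies of }H_1'\text{ in }G(n,q')\big] \;=\; \Theta\Big(n^{v(H_1')}(q')^{e(H_1')}\Big) \;=\; \Theta\Big((c')^{e(H_1')-1}\, q' n^2\Big),
\]
which is a constant fraction of (not little-$o$ of) the number of edges. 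This comparability is precisely \emph{why} $n^{-1/m_2(H)}$ is the symmetric Ramsey threshold, and why the $0$-statement of R\"odl and Ruci\'nski~\cite{RoRu93, RoRu95}, as well as the asymmetric version proved in~\cite{MaSkSpSt} for cliques, is established through a careful structural analysis of how copies of $H$ cluster, not by a first-moment deletion. Additionally, even granting a small number of monochromatic copies after random coloring, the deleted edges must themselves receive colours, and re-colouring them can create new monochromatic copies; the structural $0$-statement arguments in the cited papers are designed exactly to circumvent this obstruction, which your sketch does not address.
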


More accurately, the above conjecture was stated in~\cite{KoKr} only in the case $r=2$, but the above generalization is quite natural.\footnote{To see why the graphs $H_3,\ldots,H_r$ do not appear in the threshold, replace each of $H_2,\ldots,H_r$ by the disjoint union $H' = H_2 \cup \cdots \cup H_r$, and note that $m_2(H') = m_2(H_2)$, see~\cite{MaSkSpSt}.} There had been little progress on Conjecture~\ref{conj:KoKr} until quite recently, when the $0$-statement was proved by Marciniszyn, Skokan, Sp\"ohel, and Steger~\cite{MaSkSpSt} in the case where all of the $H_i$ are cliques, and the $1$-statement in the case $r = 2$ was established\footnote{In their concluding remarks, the authors of~\cite{KoSchSp} moreover claim that  their method can be extended to the setting with more than two colours, using  ideas from~\cite{RoRu95}.} by Kohayakawa, Schacht, and Sp\"ohel~\cite{KoSchSp} under very mild extra assumptions on $H_1$ and $H_2$. It was observed in~\cite[Theorem~31]{MaSkSpSt} that, using Theorem~\ref{thm:KLR}, the approach of Kohayakawa and Kreuter~\cite{KoKr}, which employs the sparse regularity lemma, can be adapted to yield a proof of the $1$-statement in Conjecture~\ref{conj:KoKr} for the following class of graphs.

\begin{thm}
  \label{thm:Ramsey-Gnp}
  Let $H_1, \ldots, H_r$ be graphs with $1 < m_2(H_r) \le \ldots \le m_2(H_1)$ and such that $H_1$ is strictly $2$-balanced. Then there exists a constant $C$ such that if $p_n \ge Cn^{-1/m_2(H_1,H_2)}$, then a.a.s.
  \[
  G(n,p_n) \to (H_1, \ldots, H_r).
  \]
\end{thm}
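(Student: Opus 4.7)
The plan is to follow the sparse regularity method of Kohayakawa and Kreuter, substituting Theorem~\ref{thm:KLR} (the K\L R conjecture) for the sparse embedding lemma. We proceed by contradiction: suppose that with probability bounded away from zero, $G = G(n, p_n)$ admits an $r$-coloring $\chi \colon E(G) \to [r]$ such that $G_i := \chi^{-1}(i)$ contains no copy of $H_i$ for any $i$.

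Condition on the a.a.s.\ event that $G$ is $(\eta, p_n, K)$-upper-uniform (via Chernoff), and apply the sparse regularity lemma to obtain an $(\eps, p_n)$-regular partition $V_1, \ldots, V_R$ of $V(G)$, with $R$ a large constant depending on $H_1, \ldots, H_r$. On each non-exceptional regular pair $(V_j, V_k)$, let $d_{i,jk}$ be the density of $G_i$ between $V_j$ and $V_k$. Introduce thresholds $\tau_1 = \alpha_1 p_n$ and $\tau_i = \alpha_i n^{-1/m_2(H_i)}$ for $i \ge 2$, where the constants $\alpha_i > 0$ are calibrated so that Theorem~\ref{thm:KLR} applies to the color-$i$ subgraph on any pair with $d_{i,jk} \ge \tau_i$. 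The central combinatorial claim, which is where the exponent $1/m_2(H_1, H_2)$ enters, asserts that if $p_n \ge Cn^{-1/m_2(H_1,H_2)}$ with $C$ large enough, then any assignment of each regular pair to a color $i$ satisfying $d_{i,jk} \ge \tau_i$ (with ties broken by a suitable cascade order) must yield some color $i^* \in [r]$ whose color class in the reduced graph contains a copy of $H_{i^*}$. This is a deterministic Tur\'an/Ramsey-type statement whose proof uses the definition~\eqref{eq:m2H2H1} together with the strict $2$-balancedness of~$H_1$.

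Given such a monochromatic $H_{i^*}$ in the reduced graph of color $i^*$, the corresponding $e(H_{i^*})$ regular pairs in $G$ yield $v(H_{i^*})$ vertex classes of size $n/R$ together with, on each edge-pair, an $(\eps, p_n)$-regular bipartite color-$i^*$ subgraph of at least $\alpha_{i^*} \tau_{i^*} (n/R)^2 \ge C' (n/R)^{2 - 1/m_2(H_{i^*})}$ edges. Apply Theorem~\ref{thm:KLR} with $H = H_{i^*}$ and the above parameters: the number of $H_{i^*}$-free configurations in $\G^*(H_{i^*}, n/R, m, m/(n/R)^2, \eps)$ is at most $\beta^m \binom{(n/R)^2}{m}^{e(H_{i^*})}$, which is vastly smaller than the total number of such configurations typically induced by $G(n,p_n)$. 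A union bound over the $\binom{R}{v(H_{i^*})}$ choices of vertex classes and the choice of coloring $\chi$ then shows that a.a.s.\ $G_{i^*}$ must contain a canonical copy of $H_{i^*}$, contradicting our assumption.

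The main obstacle is the combinatorial claim of paragraph two: locating, given the threshold cascade, a monochromatic copy of $H_{i^*}$ in the reduced graph for some color $i^*$. This is the technical heart of the Kohayakawa--Kreuter argument, and it crucially uses the strict $2$-balancedness of $H_1$, which ensures that the maximum in~\eqref{eq:m2H2H1} is attained at $H_1' = H_1$ itself, so that no proper subgraph $H_1' \subsetneq H_1$ can undercut the cascade at a cheaper threshold. The hypothesis $1 < m_2(H_r) \le \ldots \le m_2(H_1)$ is used to order the colors so that the cascade is well-defined. A secondary (but routine) technical point is the passage from the deterministic count supplied by Theorem~\ref{thm:KLR} to a probabilistic statement about random subgraphs of $G(n, p_n)$, which requires a careful union bound over all colorings $\chi$ but is otherwise standard.
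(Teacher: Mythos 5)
The paper does not actually prove Theorem~\ref{thm:Ramsey-Gnp}: having established Theorem~\ref{thm:KLR}, it simply points to Kohayakawa--Kreuter~\cite{KoKr} and~\cite[Section~4]{MaSkSpSt} for the deduction. Your outline is a sketch of that referenced argument, so the overall strategy (sparse regularity plus cascade thresholds plus K\L R) is the right one. But the final paragraph conceals a real gap. You write that the passage from the counting statement of Theorem~\ref{thm:KLR} to the probabilistic conclusion ``requires a careful union bound over all colorings $\chi$.'' A union bound over colorings cannot work: there are on the order of $r^{e(G)} = r^{\Theta(p_n n^2)}$ of them, and, more fundamentally, a coloring of $E(G)$ is not an object one can fix in advance of sampling $G$, since the regular partition (and hence the identity of the critical pairs) depends on $G$ and $\chi$ jointly. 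The actual argument in the references avoids any enumeration of colorings. One first proves, via a first-moment calculation, that a.a.s.\ $G(n,p_n)$ has the \emph{inheritance} property: for every $H_i$, every choice of vertex sets $V_1,\ldots,V_{v(H_i)}$ of the appropriate size, and every way of choosing edge-counts on the pairs with the appropriate density, $G(n,p_n)$ contains no member of $\G^*(H_i,\cdot,\cdot,\cdot,\eps)$ across those classes. Here one union-bounds over vertex sets and edge-counts (polynomially many choices), and the expectation is controlled precisely by the K\L R bound: the expected number of $\G^*$-configurations contained in $G(n,p_n)$ is at most $\beta^{m'} \binom{(n/R)^2}{m'}^{e(H_i)} p_n^{e(H_i)m'}$, which is $o(1)$ once $\beta$ is small enough relative to the density constants. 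Having established this a.a.s.\ property once, the cascade argument then works \emph{deterministically} for \emph{every} coloring simultaneously, with no further probability. Without this reframing, the step you call routine is not, and as written the proposal would not close.

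Two smaller points. First, the sparse regularity lemma must be applied jointly to all $r$ color classes (a simultaneous partition regular for each $G_i$), not merely to $G$; otherwise the quantities $d_{i,jk}$ are not well-behaved. Second, your claim that strict $2$-balancedness of $H_1$ ``ensures that the maximum in~\eqref{eq:m2H2H1} is attained at $H_1' = H_1$ itself'' is plausible but does not follow immediately, since the functional in~\eqref{eq:m2H2H1} is not $m_2(H_1')$; this requires a short separate argument (it is in~\cite{MaSkSpSt}). Neither of these is fatal, but the union bound issue is the genuine gap.
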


For the deduction of Theorem~\ref{thm:Ramsey-Gnp} from Theorem~\ref{thm:KLR}, see~\cite{KoKr} and~\cite[Section~4]{MaSkSpSt}.

\subsection{Outline of the paper}

The remainder of this paper is organized as follows. In Section~\ref{sec:main}, we state and discuss our main result, Theorem~\ref{thm:main}, which we then prove in Section~\ref{sec:proof-thm-main}. In Section~\ref{sec:Sz}, we discuss the applications of Theorem~\ref{thm:main} in the context of subsets of $[n]$ with no $k$-term arithmetic progressions. In particular, we prove Theorem~\ref{thm:Sz} and use it to derive Corollary~\ref{cor:Sz}. In Section~\ref{sec:extremal-results}, we prove two versions of the general transference theorem of Schacht~\cite[Theorem~3.3]{Sch} (obtained independently, in a slightly different form, by Conlon and Gowers~\cite{CG}) -- a `random' version suited for extremal problems in sparse random discrete structures and its `counting' counterpart that generalizes Theorem~\ref{thm:Sz}. In Section~\ref{sec:stability-results}, we prove `random' and `counting' versions of the general stability result of Conlon and Gowers~\cite{CG} in a form that is easily comparable with~\cite[Theorem~3.4]{Sam}. In Section~\ref{sec:Turan}, we discuss several applications of Theorem~\ref{thm:main} in the context of the Tur{\'a}n problem in sparse random graphs. In particular, using the results of Sections~\ref{sec:extremal-results} and~\ref{sec:stability-results} we give new proofs of the sparse random analogues (stated above) of the classical theorems of Erd{\H o}s and Stone, and Erd{\H o}s and Simonovits, see Section~\ref{sec:turan-problem}. In Section~\ref{sec:Turan-counting}, we discuss applications of Theorem~\ref{thm:main} to the problem of describing the typical structure of a sparse graph without a forbidden subgraph. In particular, we prove sparse analogues of classical theorems of Erd{\H o}s, Frankl, and R{\"o}dl and Erd{\H o}s, Kleitman, and Rothschild, see Section~\ref{sec:typical-structure}. Finally, in Section~\ref{sec:KLR}, we use Theorem~\ref{thm:main} to prove the K\L R conjecture for every graph~$H$. 

\section{The Main Theorem}

\label{sec:main}

In this section, we present the main result of this paper, Theorem~\ref{thm:main}, which gives a structural characterization of the collection of all independent sets in a large class of uniform hypergraphs. Let us stress here that all of the hypergraphs we consider are allowed to have multiple edges; moreover, we shall always count edges with multiplicities. 

We start with an important definition. Recall that a family of sets $\F \subseteq \cP(V)$ is called \emph{increasing} (or an \emph{upset}) if it is closed under taking supersets, that is, if for every $A, B \subseteq V$, $A \in \F$ and $A \subseteq B$ imply that $B \in \F$.

\begin{defn}
  \label{defn:Feps-dense}
  Let $\HH$ be a uniform hypergraph with vertex set $V$, let $\F$ be an increasing family of subsets of $V$ and let $\eps \in (0,1]$. We say that $\HH$ is \emph{$(\F,\eps)$-dense} if
  \[
  e(\HH[A]) \ge \eps e(\HH)
  \]
  for every $A \in \F$.
\end{defn}

A moment of thought reveals that for an arbitrary hypergraph $\HH$ and $\eps \in (0,1]$, it is extremely simple to find families $\F \subseteq \cP(V(\HH))$ for which $\HH$ is $(\F, \eps)$-dense. To this end, let
\[
\F_{\eps} = \big\{ A \subseteq V(\HH) \colon e(\HH[A]) \ge \eps e(\HH) \big\}
\]
and note that $\F_{\eps}$ is increasing and $\HH$ is $(\F_{\eps}, \eps)$-dense. In fact, the families $\F$ for which $\HH$ is $(\F, \eps)$-dense are precisely all increasing subfamilies of $\F_\eps$. 

In this work, we will be interested in upsets that admit a much more `constructive' description than that of $\F_\eps$. Many such families arise naturally in the study of extremal and structural problems in combinatorics. For example, consider the $k$-uniform hypergraph $\HH_1$ on the vertex set $[n]$ whose edges are all $k$-term arithmetic progressions in $[n]$ and let $\F_1$ be the collection of all subsets of $[n]$ with at least $\delta n$ elements. Clearly, $\F_1$ is an upset and it follows from the famous theorem of Szemer{\'e}di~\cite{Sz} that $\HH_1$ is $(\F_1, \eps)$-dense for some positive $\eps$ depending only on $\delta$ and $k$, see Section~\ref{sec:Sz}. Similarly, consider the $3$-uniform hypergraph $\HH_2$ on the vertex set $E(K_n)$ whose edges are edge sets of all copies of $K_3$ in the complete graph $K_n$ and let $\F_2$ be the family of all $n$-vertex graphs (subgraphs of $K_n$) with at least $(1/2 - \eps)\binom{n}{2}$ edges such that every $2$-coloring of its vertices yields at least $\delta n^2$ monochromatic edges. Again, $\F_2$ is increasing and it follows from the stability theorem of Erd{\H o}s and Simonovits~\cite{ES1,ES2} and the triangle removal lemma of Ruzsa and Szemer{\'e}di~\cite{RuSz} that $\HH_2$ is $(\F_2, \eps)$-dense, provided that $\eps$ is sufficiently small as a function of $\delta$.

Our main result roughly says the following. If $\HH$ is a uniform hypergraph that is $(\F, \eps)$-dense for some family $\F$ and whose edge distribution satisfies certain natural boundedness conditions, then the collection $\I(\HH)$ of all independent sets in $\HH$ admits a partition into relatively few classes such that all independent sets in one class are essentially contained in a single set $A \not\in \F$. Before we state the result, we first need to quantify the above boundedness conditions for the edge distribution of a hypergraph. Given a hypergraph $\HH$, for each $T \subseteq V(\HH)$, we define\footnote{We emphasize that if $\HH$ has multiple edges, then $\{ e \in \HH \colon T \subseteq e \}$ should be thought of as a multi-set. In other words, $\deg_\HH(T)$ is the number of edges of $\HH$, counted with multiplicities, which contain $T$.}
\[
\deg_\HH(T) = | \{ e \in \HH \colon T \subseteq e \} |,
\]
and let
\[
\Delta_\ell(\HH) = \max \big\{ \deg_\HH(T) \colon T \subseteq V(\HH) \text{ and } |T| = \ell \big\}.
\]
Recall that $\I(\HH)$ denotes the family of all independent sets in $\HH$. The following theorem is our main result.

\begin{thm}
  \label{thm:main}
  For every $k \in \N$ and all positive $c$ and $\eps$, there exists a positive constant $C$ such that the following holds. Let $\HH$ be a $k$-uniform hypergraph and let $\F \subseteq \cP(V(\HH))$ be an increasing family of sets such that $|A| \ge \eps v(\HH)$ for all $A \in \F$. Suppose that $\HH$ is $(\F,\eps)$-dense and $p \in (0,1)$ is such that, for every $\ell \in [k]$,
  \[
  \Delta_\ell(\HH) \le c \cdot p^{\ell-1} \frac{e(\HH)}{v(\HH)}.
  \]
  Then there exists a family $\S \subseteq \binom{V(\HH)}{\le Cp \cdot v(\HH)}$ and functions $f \colon \S \to \ol{\F}$ and $g \colon \I(\HH) \to \S$ such that for every $I \in \I(\HH)$,
  \[
  g(I) \subseteq I \qquad \text{and} \qquad I \setminus g(I) \subseteq f( g(I) ).
  \]
\end{thm}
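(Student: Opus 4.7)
The plan is to prove Theorem~\ref{thm:main} by induction on the uniformity $k$. The base case $k = 1$ is direct: the edges of a $1$-uniform hypergraph are singletons, so any independent set $I$ misses every edge-vertex of $\HH$, and one may take $\S := \{\emptyset\}$, $g(I) := \emptyset$, and $f(\emptyset) := V(\HH) \setminus \bigcup_{e \in \HH} e$. The $(\F,\eps)$-density of $\HH$ forces this set into $\ol{\F}$ whenever $e(\HH) > 0$, which is the only nontrivial case.

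For $k \ge 2$, I would construct $g$ and $f$ through an iterative algorithm on $I$ (the Scythe Algorithm flagged in the preamble), maintaining a pair $(S, A)$ with $S \subseteq I$ the growing fingerprint and $A \supseteq I \setminus S$ the shrinking container candidate (initialized to $S = \emptyset$ and $A = V(\HH)$). At each round, a canonical rule depending only on $(S, A)$ -- for instance, the vertex of maximum degree in an auxiliary hypergraph built from the links of the already-selected vertices, with ties broken by a fixed ordering of $V(\HH)$ -- chooses a vertex $v \in A$. The oracle ``is $v \in I$?'' then triggers one of two updates: on a ``yes'', append $v$ to $S$ and trim $A$ by applying the inductive hypothesis to the $(k-1)$-uniform link hypergraph $\HH_v[A]$ equipped with a suitable family $\F'$; on a ``no'', simply remove $v$ from $A$. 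Halt as soon as $A \notin \F$, and declare $g(I) := S$ and $f(S) := A$.

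Well-definedness of $f$ as a function of $S$ alone is clean: the state transitions depend only on $(S, A)$ and on the oracle answer, and for any two independent sets $I_1, I_2$ with $g(I_1) = g(I_2) = S$, the oracle answers at every round must coincide (a round answers ``yes'' iff the selected vertex belongs to the common output $S$). The containment $I \setminus g(I) \subseteq f(g(I))$ is then a loop invariant, and the halting condition places $f(g(I))$ in $\ol{\F}$. The heart of the argument is the bound $|g(I)| \le Cp \cdot v(\HH)$, obtained by showing that each ``yes'' step trims $A$ by many vertices and that the total number of ``yes'' steps needed to drop $e(\HH[A])$ below $\eps e(\HH)$ is at most $Cp \cdot v(\HH)$. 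The degree hypotheses $\Delta_\ell(\HH) \le c p^{\ell-1} e(\HH)/v(\HH)$ are tuned precisely so that each link $\HH_v[A]$ inherits analogous bounds at uniformity $k-1$, allowing the inductive trimming to be applied.

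The main obstacle will be the inductive bookkeeping: one must choose the auxiliary family $\F'$ for the link so that $\HH_v[A]$ is $(\F', \eps')$-dense with $\eps'$ bounded below in terms of $\eps$ alone (otherwise the inductive constant $C$ would compound uncontrollably in $k$), and verify that the rescaled degree bounds of $\HH_v[A]$ retain the correct form relative to a new ``$p$'' and ``$c$''. A secondary subtlety is calibrating the canonical vertex-selection rule aggressively enough that each ``yes'' step reduces $e(\HH[A])$ by the amount needed to force $A \notin \F$ within $Cp \cdot v(\HH)$ ``yes'' responses, while also ensuring that ``no'' responses (which merely delete one vertex at a time from $A$) do not dominate the accounting.
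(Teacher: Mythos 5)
Your high-level architecture --- an oracle-driven algorithm over $I$ maintaining a fingerprint $S \subseteq I$ and a container $A \supseteq I \setminus S$, with a canonical max-degree selection rule and a halt when $A \notin \F$ --- matches the spirit of the paper, and your outer loop that geometrically shrinks $A$ until it leaves $\F$ is essentially how the paper deduces Theorem~\ref{thm:main} from Proposition~\ref{prop:main}. The gap is in the recursive step. You propose, on each ``yes'', to apply the $(k-1)$-uniform inductive hypothesis to the link $\HH_v[A]$ of a single vertex, and you flag ``verify that the rescaled degree bounds of $\HH_v[A]$ retain the correct form'' as a secondary obstacle. It is not secondary; it is where the argument fails. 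From the hypothesis one gets $\Delta_\ell(\HH_v) \le \Delta_{\ell+1}(\HH) \le c\,p^{\ell}\, e(\HH)/v(\HH)$ for $\ell \in [k-1]$, whereas the $(k-1)$-uniform theorem with the same $p$ requires $\Delta_\ell(\HH_v) \le c'\, p^{\ell-1}\, e(\HH_v)/v(\HH_v)$. Taking $\ell = 1$ and $v(\HH_v) \le v(\HH)$, this forces $e(\HH_v) = \deg_\HH(v) \gtrsim p\, e(\HH)$; but $\deg_\HH(v) \le \Delta_1(\HH) \le c\, e(\HH)/v(\HH)$, so the requirement collapses to $p\, v(\HH) \lesssim c/c'$, i.e.\ it is met only in the degenerate regime where the theorem says nothing. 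In the regime of interest, $p\, v(\HH) \to \infty$, the link of any single vertex is too sparse by a factor of $\Theta(p\, v(\HH))$, and taking $c'$ or $p'$ to scale with $p\,v(\HH)$ destroys the uniformity of the constant $C$. The paper's Scythe Algorithm avoids this precisely by \emph{not} recursing on a single link: it builds each lower-uniformity hypergraph $\HH_i$ as the union of trimmed links over $b = p\, v(\HH)$ carefully chosen vertices, which is what restores the density $e(\HH_i) \gtrsim p\, e(\HH_{i+1})$ needed for the accounting (Lemma~\ref{lemma:P4Ai}), and it actively deletes edges through high-degree $\ell$-sets (the families $M_\ell^i$) to keep $\Delta_\ell(\HH_i)$ under control --- control that a raw link does not inherit.

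There is a second, independent difficulty with well-definedness. Your ``yes'' step would replace $A$ by the inductive container $f'\bigl(g'(I \cap A)\bigr)$ of the link, but that container depends on the inductive sub-fingerprint $g'(I \cap A)$, which is a subset of $I$ not recorded in $S$. Either you append $g'(I \cap A)$ to $S$ --- and then the fingerprint size compounds over the ``yes'' rounds well beyond $C p\, v(\HH)$ --- or $f$ is not a function of $S$ alone, contradicting the claim that the oracle answers can be reconstructed from $S$. The paper sidesteps this with the consistency clause at the end of Proposition~\ref{prop:main} and Lemma~\ref{lemma:WAlg-consistency}, which are stated and proved for the aggregated hypergraphs $\HH_i$ and fingerprint pieces $B_i$, not for per-vertex links; that consistency is what lets $f_0$ (and hence $f$) be determined by $S$ alone. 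To salvage your plan you would need to replace the single-link recursion by something equivalent to the Scythe Algorithm's accumulation-and-pruning loop, at which point you are reproving Proposition~\ref{prop:main} rather than invoking an inductive hypothesis at a smaller $k$.
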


Roughly speaking, if $\HH$ satisfies certain technical conditions, then each independent set $I$ in $\HH$ can be labelled with a small subset $g(I)$ in such a way that all sets labelled with some $S \in \S$ are essentially contained in a single set $f(S)$ that contains very few edges of $\HH$. We remark that the constant $C$ in the theorem has only a polynomial dependence on $\eps$. Unfortunately, however, in most of our applications $\eps$ will have a tower-type dependence on some other parameter.

Theorem~\ref{thm:main} will be proved in Section~\ref{sec:proof-thm-main}. We end this section with a short informal discussion of its consequences. As we have already mentioned, Theorem~\ref{thm:main} combined with some classical extremal results on discrete structures has strikingly strong implications. Let us briefly explain why this is so. Many classical extremal problems ask for an estimate on the number of independent sets (of a certain size) in some auxiliary uniform hypergraph. If applicable, Theorem~\ref{thm:main} implies that all such independent sets are almost contained in one of very few sets that are almost independent, that is, contain a small number of copies of some forbidden substructure. If we know a good characterization of sets that are almost independent in the above sense, which is often the case, we can easily obtain an upper bound on the number of independent sets. For example, consider the problem of counting subsets of $[n]$ with no $k$-term AP and recall the definition of $\HH_1$ and $\F_1$ from the beginning of this section. Theorem~\ref{thm:main}, applied to this pair, implies that every subset of $[n]$ with no $k$-term AP is essentially contained in one of at most $\binom{n}{O(n^{1-1/(k-1)})}$ sets of size at most $\delta n$ each, where $\delta$ is an arbitrarily small positive constant. This easily implies that if $m \gg n^{1-1/(k-1)}$, then there are at most $\binom{2\delta n}{m}$ sets of size $m$ with no $k$-term AP. For more details, we refer the reader to Section~\ref{sec:Sz}.

\section{Proof of the main theorem}

\label{sec:proof-thm-main}

In this section, we shall prove Theorem~\ref{thm:main}. The main ingredient in the proof is the following proposition, which (roughly) says that Theorem~\ref{thm:main} holds in the special case when $\F$ is the family of all subsets of $V(\HH)$ with at least $(1-\delta)v(\HH)$ elements. Theorem~\ref{thm:main} follows by applying Proposition~\ref{prop:main} a constant number of times. 

\begin{prop}
  \label{prop:main}
  For every integer $k$ and positive $c$, there exists a positive $\delta$ such that the following holds. Let $p \in (0,1)$ and suppose that $\HH$ is a $k$-uniform hypergraph such that, for every $\ell \in [k]$,
  \[
  \Delta_\ell(\HH) \le c \cdot p^{\ell-1} \frac{e(\HH)}{v(\HH)}.
  \]
  Then there exist a family $\S \subseteq \binom{V(\HH)}{\le (k-1)p \cdot v(\HH)}$ and functions $f_0 \colon \S \to \cP(V(\HH))$ and $g_0 \colon \I(\HH) \to \S$ such that for every $I \in \I(\HH)$,
  \[
  g_0(I) \subseteq I \subseteq f_0(g_0(I)) \cup g_0(I) \qquad \text{and} \qquad \big| f_0( g_0(I) ) \big| \le (1-\delta)v(\HH).
  \]
  Moreover, if for some $I, I' \in \I(\HH)$, $g_0(I) \subseteq I'$ and $g_0(I') \subseteq I$, then $g_0(I) = g_0(I')$.
\end{prop}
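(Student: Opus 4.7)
The plan is to construct $g_0$ and $f_0$ via a deterministic algorithm (the Scythe Algorithm) that takes as input $\HH$, a fixed linear order on $V(\HH)$, and an independent set $I$, and produces a fingerprint $g_0(I) \subseteq I$ of size at most $(k-1)p \cdot v(\HH)$ together with a deterministic exclusion set $A = A(g_0(I)) \subseteq V(\HH) \setminus g_0(I)$ of size at least $\delta v(\HH)$, with $A \cap I = \emptyset$. Setting $f_0(S) := V(\HH) \setminus (S \cup A(S))$ for each $S$ in the image of $g_0$ then yields both $I \subseteq g_0(I) \cup f_0(g_0(I))$ and $|f_0(g_0(I))| \le (1-\delta)v(\HH)$. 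The algorithm will run in $k-1$ nested phases indexed by $\ell = 1, \ldots, k-1$: within phase $\ell$, it maintains a level-$\ell$ auxiliary (multi-)hypergraph whose edges come from edges of $\HH$ conditioned on containing the fingerprint vertices added in earlier phases, then repeatedly picks the vertex $u$ of maximum degree in this auxiliary object (breaking ties by the fixed order), adds $u$ to the fingerprint if $u \in I$ and otherwise puts $u$ in $A$. After at most $p \cdot v(\HH)$ fingerprint additions, phase $\ell$ ends, summing to the overall fingerprint bound $(k-1)p \cdot v(\HH)$.

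The inclusion $I \setminus g_0(I) \subseteq f_0(g_0(I))$ will hold by construction, since $A$ receives only vertices queried and found outside $I$. The consistency property follows by induction on step count: because the vertex queried at each step depends only on $\HH$, the fixed order, and the fingerprint so far, running the algorithm on two sets $I, I'$ with $g_0(I) \subseteq I'$ and $g_0(I') \subseteq I$ produces the same query at step $j$. Vertices eventually added to $g_0(I)$ lie in $g_0(I) \subseteq I'$ and are therefore accepted by the run on $I'$, while vertices placed in $A$ during the run on $I$ cannot lie in $I'$ (else they would enter $g_0(I') \subseteq I$, contradicting their exclusion from $I$). Inductively, $g_0(I) = g_0(I')$.

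The main technical step, and the main obstacle, will be the lower bound $|A| \ge \delta v(\HH)$. Its proof will exploit the boundedness hypothesis $\Delta_\ell(\HH) \le c p^{\ell-1} e(\HH)/v(\HH)$ as follows: this bound guarantees that the maximum degree in the level-$\ell$ auxiliary hypergraph is at most a constant times $p$ times its average degree, so each max-degree vertex selected by the algorithm at level $\ell$ captures an $\Omega(p)$-fraction of the remaining auxiliary edges. Each such step either adds one element to the fingerprint (consuming the budget $p \cdot v(\HH)$ allotted to phase $\ell$) or forces a ``no'' answer that places a vertex in $A$; balancing the two over the $p \cdot v(\HH)$ fingerprint queries of phase $\ell$ forces $\Omega(v(\HH))$ exclusions into $A$ during that phase. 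The delicate point, which is also what necessitates the nested structure and the fingerprint size bound $(k-1)p \cdot v(\HH)$, is that one must show the level-$\ell$ auxiliary hypergraph inherits a suitably weakened form of the boundedness condition $\Delta_{\ell'}(\HH) \le c p^{\ell'-1} e(\HH)/v(\HH)$ needed for phase $\ell+1$; this will involve tracking how edges of the auxiliary hypergraph are generated from high-degree configurations of the original and carefully choosing density thresholds that govern the transition between phases.
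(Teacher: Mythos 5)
Your general architecture matches the paper's: a deterministic Scythe-style algorithm run in $k-1$ descending levels, fingerprint vertices chosen as the head of $I$ in max-degree order, and a lockstep/determinism argument for consistency. That part of your consistency argument is sound. The gap is in the central step, the bound $|A|\ge\delta v(\HH)$, and it is a real gap, not a matter of polishing.

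You claim that ``balancing the two over the $p\cdot v(\HH)$ fingerprint queries of phase $\ell$ forces $\Omega(v(\HH))$ exclusions into $A$ during that phase.'' This is false and cannot be repaired: a phase can complete its $p\cdot v(\HH)$ fingerprint queries while excluding essentially nothing, because every head-of-$I$ vertex may already be near the front of the max-degree order. Moreover the premise you offer for it does not support it even heuristically: from $\Delta_1 \le c\,e(\HH)/v(\HH)$ the top vertex has degree $O(e/v)$, so a single query accounts for an $O(1/v)$-fraction of the current edges, not an $\Omega(p)$-fraction. The correct statement, which is what the paper proves (Lemma~\ref{lemma:P4Ai}), is a \emph{dichotomy}: either the edges removed at level $i$ went mostly into the initial segments $W(u_j)$, in which case $|A_i|$ is small, \emph{or} they went into the $i$-uniform link hypergraph $\HH_i$, in which case $\HH_i$ retains an $\Omega(p)$-fraction of $e(\HH_{i+1})$ and the recursion continues. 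Nothing forces the first alternative at any particular level. The conclusion $|A|\le(1-\delta)v(\HH)$ is extracted either from whichever level happens to land in the first alternative, or, if none does, from the fact that the surviving $1$-uniform hypergraph $\HH_1$ has $\Omega(p^{k-1}e(\HH))$ edges and bounded max degree, so it has $\Omega(v(\HH))$ distinct singleton edges, all of which $I$ must avoid. Your proposal has no analogue of this terminal case, so it has no valid source for the $\delta v(\HH)$ lower bound.

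A second, related omission: for the dichotomy to hold at level $\ell+1$ one needs $\Delta_{\ell'}(\HH_\ell)$ to satisfy the recursive thresholds $\Delta_{\ell'}^\ell$. You acknowledge this as a ``delicate point'' to be handled by ``carefully choosing density thresholds,'' but the mechanism that actually makes it work is specific: at each step, in addition to pruning the prefix $W(u_j)$ from the auxiliary hypergraph, one must also delete all edges containing any $\ell'$-set whose degree in the current $\HH_\ell$ has already reached $\Delta_{\ell'}^\ell/2$ (the sets $M_{\ell'}^\ell$ in the paper). This deletion is what caps $\Delta_{\ell'}(\HH_\ell)$, and the count of such deleted edges is exactly the second branch of the dichotomy above. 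Your write-up has no such pruning rule, so the degree-inheritance claim has no proof, and without it the descent cannot be iterated.
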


The final line of Proposition~\ref{prop:main} states that the labelling function $g_0$ exhibits a certain consistency. This property of $g_0$, which may look somewhat puzzling, will be crucial in the proof of Theorem~\ref{thm:main}.

In order to prove Proposition~\ref{prop:main}, given an independent set $I \in \I(\HH)$, we shall construct a sequence $(B_{k-1}, \ldots, B_q)$ of subsets of $I$ with $|B_{k-1}|, \ldots, |B_q| \le p v(\HH)$, for some $q \in [k-1]$, and use it to define a sequence $(\HH_{k-1}, \ldots, \HH_{r})$, where $r \in \{q, q+1\}$, of hypergraphs such that the following holds for each $i \in \{r, \ldots, k-1\}$:
\begin{enumerate}[(a)]
\item
  $\HH_i$ is an $i$-uniform hypergraph on the vertex set $V(\HH)$,
\item
  $I$ is an independent set in $\HH_i$,
\item
  \label{item:DeltaHHi}
  $\Delta_1(\HH_i) \le O\big(e(\HH_i) / v(\HH_i)\big)$, and
\item
  \label{item:eHHi}
  $e(\HH_i) \ge \Omega(p^{k-i} e(\HH))$.
\end{enumerate}
We shall be able to do it in such a way that in the end, there will be a set $A \subseteq V(\HH)$ of size at most $(1-\delta)v(\HH)$ such that the remaining elements of $I$ (i.e., the set $I \setminus S$, where $S = B_k \cup \cdots \cup B_q$) must all lie inside $A$. If $r = 1$, then we will simply let $A$ be the set of non-edges of the $1$-uniform hypergraph $\HH_1$; in this case, the upper bound on $|A|$ will follow from~(\ref{item:DeltaHHi}) and~(\ref{item:eHHi}). If $r > 1$, then we will obtain an appropriate $A$ while trying (and failing) to construct the hypergraph $\HH_{r-1}$ using the hypergraph $\HH_r$ and the set $B_r$. Crucially, this set $A$ will depend solely on $S$, that is, if for some pair $I, I' \in \I(\HH)$ our procedure generates $(S, A)$ and $(S', A')$, respectively, and if $S = S'$, then also $A = A'$. This will allow us to set $g_0(I) = S$ and $f_0(S) = A$.

\subsection{The Algorithm Method}

\label{sec:WSAlg}

For the remainder of this section, let us fix $k$, $c$, $p$, and $\HH$ as in the statement of Proposition~\ref{prop:main}. Without loss of generality, we may assume that $c \ge 1$. Let $I$ be an independent set in $\HH$. We shall describe a procedure of choosing the sets $B_i \subseteq I$ and constructing the hypergraphs $\HH_i$ as above. This procedure, which we shall term the \emph{Scythe Algorithm}, lies at the heart of the proof of Proposition~\ref{prop:main}.

The general strategy used in the Scythe Algorithm, that of selecting a small set $S$ of high-degree vertices and using it to define a set $A$ such that $S \subseteq I \subseteq A \cup S$, dates back to the work of Kleitman and Winston~\cite{KW}, who used it to bound the number of independent sets in graphs satisfying the following local density condition: all sufficiently large vertex sets induce subgraphs with many edges. Recently, Balogh and Samotij~\cite{BS1,BS2} refined the ideas of Kleitman and Winston and obtained a bound on the number of independent sets in uniform hypergraphs satisfying a similar local density condition. Even more recently, Alon, Balogh, Morris and Samotij~\cite{ABMS1} used similar ideas to bound the number of independent sets in `almost linear' $3$-uniform hypergraphs satisfying a more general density condition termed $(\alpha,\B)$-stability, see Definition~\ref{defn:aB-stable}. Here, we combine, generalize, and refine all of the above approaches and make them work in the general setting of $(\F, \eps)$-dense uniform hypergraphs.

At each step of the Scythe Algorithm, we shall order the vertices of a certain subhypergraph of $\HH$ with respect to their degrees in that subhypergraph. For the sake of brevity and clarity of the presentation, let us make the following definition.

\begin{defn}[Max-degree order]
  Given a hypergraph $\G$, we define the \emph{max-degree order} on $V(\G)$ as follows:
  \begin{enumerate}
  \item
    \label{item:mdo-a}
    Fix an arbitrary total ordering of $V(\G)$.
  \item
    For each $j \in \{1, \ldots, v(\G)\}$, let $u_j$ be the maximum-degree vertex in the hypergraph $\G\big[ V(\G) \setminus \{u_1, \ldots, u_{j-1}\} \big]$; ties are broken by giving preference to vertices which come earlier in the order chosen in (\ref{item:mdo-a}).
  \item
    The max-degree order on $V(\G)$ is $(u_1,\ldots,u_{v(\G)})$. 
  \end{enumerate}
  Finally, we write $W(u)$ to denote the initial segment of the max-degree order on $V(\G)$ that ends with $u$, i.e., for every $j$, we let $W(u_j) = \{u_1, \ldots, u_j\}$.
\end{defn}
We remark here that the only property of the max-degree order that will be important for us is that for every $j \in \{1, \ldots, v(\G)\}$, the degree of the vertex $u_j$ in the hypergraph $\G[V(\G) \setminus W(u_{j-1})]$ is at least as large as the average degree of this hypergraph.

We next define the numbers $\Delta_\ell^i$, where $1 \le \ell \le i \le k$, which will play a crucial role in the description and the analysis of the algorithm.

\begin{defn}\label{def:delta:is}
  For every $\ell \in [k]$, let $\Delta_\ell^k = \Delta_\ell(\HH)$ and for all $i \in [k-1]$ and $\ell \in [i]$, let
  \begin{equation}
    \label{def:delta}
    \Delta_\ell^i = \max \left\{ 2 \cdot \Delta_{\ell+1}^{i+1}, \, p \cdot \Delta_{\ell}^{i+1} \right\}.
  \end{equation}
\end{defn}

We use the numbers $\Delta_\ell^i$ to define the following families of sets with high degree.

\begin{defn}\label{def:Msets}
  Given an $i \in [k]$, an $i$-uniform hypergraph $\G$ and an $\ell \in [i]$, let
  \[
  M_\ell^i(\G) = \left\{ T \in \binom{V(\G)}{\ell} \colon \deg_\G(T) \ge \frac{\Delta_\ell^i}{2} \right\}.
  \]
\end{defn}

Let $b = p v(\HH)$ and for each $i \in [k]$, let $c_i = (ck2^{k+1})^{i-k}$.

\begin{Prop}
  The key properties that we would like the constructed hypergraph $\HH_i$ to possess are:
  \begin{enumerate}[(P1)]
  \item
    \label{item:WAlg-prop-1}
    $\HH_i$ is $i$-uniform and $V(\HH_i) = V(\HH)$,
  \item
    \label{item:WAlg-prop-2}
    $I$ is an independent set in $\HH_i$,
  \item
    \label{item:WAlg-prop-3}
    $\Delta_\ell(\HH_i) \le \Delta_\ell^i$ for each $\ell \in [i]$,
  \item
    \label{item:WAlg-prop-4}
    $e(\HH_i) \ge c_i p^{k-i} e(\HH)$.
  \end{enumerate}
\end{Prop}

Set $\HH_k = \HH$ and note that (P\ref{item:WAlg-prop-1})--(P\ref{item:WAlg-prop-4}) are vacuously satisfied for $i = k$. The main step of the Scythe Algorithm will be a procedure that, given $\HH_{i+1}$ and $I$ satisfying (P\ref{item:WAlg-prop-1})--(P\ref{item:WAlg-prop-4}), outputs a set $B_i \subseteq I$ of cardinality at most $b$, a set $A_i \subseteq V(\HH)$ with the property that $I \setminus B_i \subseteq A_i$, and a hypergraph $\HH_i$ satisfying (P\ref{item:WAlg-prop-1})--(P\ref{item:WAlg-prop-3}). Moreover, if the constructed $\HH_i$ does not satisfy (P\ref{item:WAlg-prop-4}), then we have $|A_i| \le (1-c_i)v(\HH)$. Crucially, these $A_i$ and $\HH_i$ depend solely on $B_i$ and $\HH_{i+1}$, that is, if on two inputs $(\HH_{i+1}, I)$ and $(\HH_{i+1}, I')$, the procedure outputs the same set $B_i$, it also outputs the same $A_i$ and $\HH_i$.

\begin{WAlg}
  Given an $(i+1)$-uniform hypergraph $\HH_{i+1}$ and an independent set $I \in \I(\HH_{i+1})$, set $\A_{i+1}^{(0)} = \HH_{i+1}$ and let $\HH_i^{(0)}$ be the empty hypergraph on the vertex set $V(\HH)$. For $j = 0, \ldots, b-1$, do the following:
  \begin{enumerate}[(1)]
  \item
    \label{item:WAlg-0}
    If $I \cap V\big( \A_{i+1}^{(j)} \big) = \emptyset$, then set $\HH_i = \HH_i^{(0)}$, $A_i = \emptyset$, and $B_i = \{u_0, \ldots, u_{j-1}\}$ and STOP.
  \item
    \label{item:WAlg-1}
    Let $u_j$ be the first vertex of $I$ in the max-degree order on $V\big( \A_{i+1}^{(j)} \big)$.
  \item \label{item:WAlg-2}
    Let $\HH_{i}^{(j+1)}$ be the hypergraph on the vertex set $V(\HH)$ defined by:
    \[
    \HH_{i}^{(j+1)} = \HH_{i}^{(j)} \cup \left\{D \in \binom{V(\HH)}{i} \colon D \cup \{u_j\} \in \A_{i+1}^{(j)} \right\}.
    \]
  \item
    \label{item:WAlg-3}
    Let $\A_{i+1}^{(j+1)}$ be the hypergraph on the vertex set $V\big( \A_{i+1}^{(j)} \big) \setminus W(u_j)$ defined by:\footnote{We emphasize that $W(u_j)$ is defined relative to the max-degree order on $V(\A_{i+1}^{(j)})$.}
    \[
    \A_{i+1}^{(j+1)} = \left\{ D \in \A_{i+1}^{(j)} \colon D \cap W(u_j) = \emptyset \text{ and } T \nsubseteq D \text{ for every } T  \in \bigcup_{\ell=1}^{i} M_\ell^i \big( \HH_{i}^{(j+1)} \big) \right\}.
    \]
  \end{enumerate}
  Finally, set $\HH_{i} = \HH_{i}^{(b)}$, $A_i = V\big( \A_{i+1}^{(b)} \big)$, and $B_i = \{u_0,\ldots,u_{b-1}\}$. 
\end{WAlg}

We shall now establish various properties of the Scythe Algorithm. We begin by making some basic (but key) observations.

\begin{lemma}
  \label{lemma:WAlg-basics}
  The following hold for every $i \in [k-1]$:
  \begin{enumerate}[(a)]
  \item
    \label{item:WAlg-basics-0}
    $\HH_i$ is $i$-uniform and $V(\HH_i) = V(\HH)$.
  \item
    \label{item:WAlg-basics-1}
    If $I \in \I(\HH_{i+1})$, then $I \in \I(\HH_i)$.
  \item
    \label{item:WAlg-basics-2}
    $B_i \subseteq I \subseteq A_i \cup B_i$.
  \item
    \label{item:WAlg-basics-3}
    The hypergraph $\HH_i$ and the set $A_i$ depend only on $\HH_{i+1}$ and the set $B_i$.
  \end{enumerate}
\end{lemma}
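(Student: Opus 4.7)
The lemma collects four basic invariants of the Scythe Algorithm; parts (a)--(c) are bookkeeping that should fall out of unwinding the definitions, while part (d) is the consistency property that will be crucial in Proposition~\ref{prop:main} and is the only one with real content.

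For part (a), $\HH_i^{(0)}$ is the empty hypergraph on the vertex set $V(\HH)$, and the update rule in step (3) only adds $i$-element subsets of $V(\HH)$; so $\HH_i$ is $i$-uniform on $V(\HH)$. For part (b), any edge $D \in \HH_i$ was added at some iteration $j$ as a witness that $D \cup \{u_j\} \in \A_{i+1}^{(j)} \subseteq \HH_{i+1}$. Since $u_j \in I$ and $I$ is $\HH_{i+1}$-independent, $D \not\subseteq I$, so $I$ is independent in $\HH_i$.

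For part (c), the inclusion $B_i \subseteq I$ is immediate from step (2). For $I \subseteq A_i \cup B_i$, the plan is to maintain the invariant $I \cap V(\A_{i+1}^{(j)}) = I \setminus \{u_0, \ldots, u_{j-1}\}$ by induction on $j$. The inductive step uses the fact that $u_{j-1}$ is the \emph{first} $I$-vertex in the max-degree order on $V(\A_{i+1}^{(j-1)})$, so $W(u_{j-1}) \cap I = \{u_{j-1}\}$; passing from $V(\A_{i+1}^{(j-1)})$ to $V(\A_{i+1}^{(j)}) = V(\A_{i+1}^{(j-1)}) \setminus W(u_{j-1})$ therefore removes exactly $u_{j-1}$ from $I$. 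Whether the loop exits normally after $b$ steps or early via step (1), the invariant yields $I \setminus B_i \subseteq A_i$.

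The substantive part is (d), where the plan is to show that the entire sequence $u_0, u_1, \ldots$ can be reconstructed from $\HH_{i+1}$ and $B_i$ alone. The key observation is that at each iteration $j$, the vertex $u_j$ equals the first element of $B_i \setminus \{u_0, \ldots, u_{j-1}\}$ in the max-degree order on $V(\A_{i+1}^{(j)})$. Indeed, by the invariant from (c), $B_i \setminus \{u_0, \ldots, u_{j-1}\} \subseteq I \cap V(\A_{i+1}^{(j)})$, and by step (2) no element of $I \cap V(\A_{i+1}^{(j)})$ precedes $u_j$ in the order; since $u_j$ itself lies in $B_i \setminus \{u_0, \ldots, u_{j-1}\}$, it is the first. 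A routine induction on $j$ then shows that $u_j$, $\A_{i+1}^{(j)}$, and $\HH_i^{(j)}$ depend only on $\HH_{i+1}$ and $B_i$: the base case is definitional, and the inductive step recovers $u_j$ via the observation above and then applies the formulas in steps (3) and (4) to produce $\A_{i+1}^{(j+1)}$ and $\HH_i^{(j+1)}$. Taking $j = b$ (or the early-termination index) gives the dependence of $\HH_i$ and $A_i$ on $B_i$ and $\HH_{i+1}$ only, as required; the main difficulty is just isolating the correct ``reconstruction'' identity for the $u_j$'s, after which everything else is mechanical.
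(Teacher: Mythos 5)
Your proof is correct and follows essentially the same approach as the paper's: the same edge-lifting argument for (b), the same observation that $W(u_j)\cap I = \{u_j\}$ for (c), and the same reconstruction-by-induction idea for (d). The only thing worth making fully explicit (the paper does this in one sentence) is how one detects early termination from $B_i$ alone: the algorithm stops in step (1) precisely when $|B_i| < b$, in which case both $\HH_i$ and $A_i$ are empty and there is nothing further to reconstruct.
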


\begin{proof}
  Property (\ref{item:WAlg-basics-0}) is trivial. To see (\ref{item:WAlg-basics-1}), simply observe that each edge of $\HH_i$ is of the form $D \setminus \{u\}$ for some $D \in \HH_{i+1}$ and $u \in I$. Thus, if $I$ contains an edge of $\HH_i$, it must also contain an edge of $\HH_{i+1}$. To see (\ref{item:WAlg-basics-2}), observe that for each $j$, $u_j$ is the first vertex of $I$ in the max-degree order on $V\big( \A_{i+1}^{(j)} \big)$ and hence $W(u_j) \cap I = \{u_j\}$. It follows that $B_i \subseteq I$ and that $I \setminus A_i = B_i$. Note in particular that if $A_i = \emptyset$, then $I \cap V\big( \A_{i+1}^{(j)}\big) = \emptyset$ for some $j \in \{0, \ldots, b\}$, which implies that $B_i = I$. Finally, to prove (\ref{item:WAlg-basics-3}), observe that all steps of the Scythe Algorithm are deterministic and that every element of $I$ that we need to observe in order to define $A_i$ and $\HH_i$ is placed in $B_i$. More precisely, note that while choosing the vertex $u_j$, we only need to know the first vertex of $I$ in the max-degree order on $V\big(\A_{i+1}^{(j)}\big)$; the remaining vertices remain unobserved. Since we have $W(u_j) \cap B_i = W(u_j) \cap I = \{u_j\}$, this information can be recovered from $B_i$. Thus, at each step, the hypergraph $\HH_i^{(j+1)}$ can be recovered from $\HH_i^{(j)}$ and $B_i$, and the hypergraph $\A_{i+1}^{(j+1)}$ can be recovered from $\A_{i+1}^{(j)}$, $\HH_i^{(j+1)}$ and $B_i$. Hence, a trivial inductive argument proves that, if the algorithm does not stop in step~(\ref{item:WAlg-0}), for each $j \in \{0, \ldots, b\}$, the hypergraphs $\HH_i^{(j)}$ and $\A_{i+1}^{(j)}$ are determined by $\HH_{i+1}$ and the set $B_i$, as required.  Finally, the algorithm stops in step~(\ref{item:WAlg-0}) if and only if $|B_i| < b$. If this happens, then $\HH_i$ and $A_i$ are empty.
\end{proof}

We next show that the Scythe Algorithm exhibits a certain `consistency' while generating its output. This property will be important in the proof of Proposition~\ref{prop:main}.

\begin{lemma}
  \label{lemma:WAlg-consistency}
  Suppose that on inputs $(\HH_{i+1}, I)$ and $(\HH_{i+1}, I')$, the Scythe Algorithm outputs $(A_i, B_i, \HH_i)$ and $(A_i', B_i', \HH_i')$, respectively. If $B_i \subseteq I'$ and $B_i' \subseteq I$, then $(A_i, B_i, \HH_i) = (A_i', B_i', \HH_i')$.
\end{lemma}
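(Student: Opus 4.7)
The plan is to reduce the lemma to showing $B_i = B_i'$, and then prove this equality by a straightforward lock-step induction on the steps of the Scythe Algorithm. Indeed, part~(\ref{item:WAlg-basics-3}) of Lemma~\ref{lemma:WAlg-basics} already tells us that $\HH_i$ and $A_i$ are determined by $\HH_{i+1}$ together with $B_i$, and $\HH_i'$ and $A_i'$ are determined by $\HH_{i+1}$ together with $B_i'$; so once we show $B_i = B_i'$, the triples must coincide.

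To prove $B_i = B_i'$, I would show by induction on $j$ that the auxiliary objects constructed in the two runs agree at every stage: namely, $\A_{i+1}^{(j)} = \A_{i+1}^{\prime(j)}$, $\HH_i^{(j)} = \HH_i^{\prime(j)}$, and, if the vertex $u_{j-1}$ has been selected, then $u_{j-1} = u_{j-1}'$. The base case $j=0$ is immediate since both runs start from $\A_{i+1}^{(0)} = \HH_{i+1}$ and the empty hypergraph. For the inductive step, note that since the state at the start of step $j$ is identical in the two runs, the max-degree order on $V(\A_{i+1}^{(j)})$ that governs step~(\ref{item:WAlg-1}) is also identical. Now the key observation is this: if step~(\ref{item:WAlg-1}) selects some vertex $u_j$ in the first run, then $u_j \in B_i \subseteq I'$, so $u_j$ lies in $I' \cap V(\A_{i+1}^{(j)})$; hence the second run does not terminate at this step either, and it selects a vertex $u_j'$ that appears no later than $u_j$ in the order. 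Symmetrically, $u_j' \in B_i' \subseteq I$, so $u_j$ appears no later than $u_j'$. Therefore $u_j = u_j'$, and the updates in steps~(\ref{item:WAlg-2})--(\ref{item:WAlg-3}) produce identical $\HH_i^{(j+1)}$ and $\A_{i+1}^{(j+1)}$.

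It remains to handle the stopping clause. If the first run halts in step~(\ref{item:WAlg-0}) at iteration $j$, i.e.\ $I \cap V(\A_{i+1}^{(j)}) = \emptyset$, then I must show the second run halts at the same iteration. If instead the second run selected some $u_j'$, then $u_j' \in B_i' \subseteq I$, contradicting $I \cap V(\A_{i+1}^{(j)}) = \emptyset$. So both runs halt simultaneously with the same partial sequence of $u$'s, giving $B_i = B_i'$ as required. The main (mild) subtlety is simply making sure the hypothesis $B_i \subseteq I'$ is used precisely at the moment when we need to know that a vertex selected from $I$ also belongs to $I'$, so that the symmetric first-in-the-order argument forces the two choices to coincide; once this is set up, the induction is essentially mechanical.
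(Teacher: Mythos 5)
Your proposal is correct and follows essentially the same route as the paper's proof: reduce to $B_i = B_i'$ via Lemma~\ref{lemma:WAlg-basics}(\ref{item:WAlg-basics-3}), then use $B_i \subseteq I'$ and $B_i' \subseteq I$ to argue that at the first place the two runs could differ, each selected vertex would have to precede the other in the same max-degree order, forcing them to coincide. The only cosmetic difference is that the paper organizes this as a minimal-counterexample contradiction (treating the early-stop case separately), whereas you run a direct lock-step induction with the termination clause folded into the inductive step.
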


\begin{proof}
  By Lemma~\ref{lemma:WAlg-basics}, it suffices to show that $B_i = B_i'$. Let us first consider the (degenerate) case when $\min\{|B_i|, |B_i'|\} < b$. Without loss of generality, we may assume that $|B_i| < b$. This means that, while running on $(\HH_{i+1}, I)$, the Scythe Algorithm stopped in step~(\ref{item:WAlg-0}). By Lemma~\ref{lemma:WAlg-basics}, it follows that $B_i = I$ and hence $B_i' \subseteq B_i$, which means that $|B_i'| < b$ and therefore $B_i' = I'$. Hence, $B_i = B_i'$, as claimed. On the other hand, if $|B_i| = |B_i'| = b$ and $B_i \neq B_i'$, then there must exist some $j$ such that $u_j \neq u_j'$. Let $j$ be the smallest such index. Note that by the minimality of $j$, we have $\A_{i+1}^{(j)} = \big(\A_{i+1}^{(j)}\big)' = \A$. Since $u_j \neq u_j'$, one of these vertices comes earlier in the max-degree order on $V(\A)$; without loss of generality, we may suppose that it is $u_j$. Since $B_i \subseteq I'$, it follows that $u_j \in I'$ and hence the Algorithm, while running on the input $(\HH_{i+1}, I')$, would not pick $u_j'$ in step $j$, a contradiction. This shows that in fact $B_i = B_i'$, as required.
\end{proof}

The next lemma shows that if $\HH_{i+1}$ satisfies (P\ref{item:WAlg-prop-3}), then so does $\HH_i$. The lemma follows easily from the definitions of $\Delta_\ell^i$ and $M_\ell^i(\G)$. 

\begin{lemma}
  \label{lemma:Delta}
  If $\Delta_{\ell+1}(\HH_{i+1}) \le \Delta_{\ell+1}^{i+1}$ for some $\ell \in [i]$, then $\Delta_\ell(\HH_{i}) \le \Delta_\ell^i$.
\end{lemma}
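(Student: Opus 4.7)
The plan is to fix an arbitrary $T \in \binom{V(\HH)}{\ell}$ and track the degree $d_j := \deg_{\HH_i^{(j)}}(T)$ across the steps $j = 0, 1, \ldots, b$ of the Scythe Algorithm; since $\HH_i = \HH_i^{(b)}$, the goal is to show $d_b \le \Delta_\ell^i$.

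The first ingredient is a bound on each jump $d_{j+1} - d_j$. Any edge added to $\HH_i$ at step~$j$ has the form $D = e \setminus \{u_j\}$ for some $e \in \A_{i+1}^{(j)}$ with $u_j \notin D$; in particular, if $T \subseteq D$ then $u_j \notin T$ and $T \cup \{u_j\} \subseteq e$. Consequently the number of such $D$ is at most $\deg_{\A_{i+1}^{(j)}}(T \cup \{u_j\}) \le \deg_{\HH_{i+1}}(T \cup \{u_j\}) \le \Delta_{\ell+1}(\HH_{i+1}) \le \Delta_{\ell+1}^{i+1}$, by the hypothesis of the lemma. Thus $d_{j+1} - d_j \le \Delta_{\ell+1}^{i+1}$ for every $j$.

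The second ingredient is that once $d_j$ crosses the threshold $\Delta_\ell^i/2$, step~(\ref{item:WAlg-3}) of the algorithm prevents any further growth. Let $j^*$ be the smallest $j$ with $d_j \ge \Delta_\ell^i/2$, or $j^* = b$ if no such $j$ exists. The case $j^* = b$ gives $d_b < \Delta_\ell^i/2 \le \Delta_\ell^i$ immediately, so assume $1 \le j^* < b$. Then $T \in M_\ell^i(\HH_i^{(j^*)})$, so the cleaning condition in step~(\ref{item:WAlg-3}) at time $j^* - 1$ forces $T \nsubseteq D$ for every $D \in \A_{i+1}^{(j^*)}$. Since the hypergraphs $\A_{i+1}^{(j)}$ are nested decreasing, and the $\HH_i^{(j)}$ are nested increasing (so $T$ remains in $M_\ell^i(\HH_i^{(j)})$ for all $j \ge j^*$), no edge containing $T$ is ever added to $\HH_i^{(j)}$ after step $j^* - 1$. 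Hence $d_b = d_{j^*}$, and combining with the jump bound gives
\[
d_b \;=\; d_{j^*} \;\le\; d_{j^* - 1} + \Delta_{\ell+1}^{i+1} \;<\; \frac{\Delta_\ell^i}{2} + \Delta_{\ell+1}^{i+1} \;\le\; \Delta_\ell^i,
\]
where the last inequality is precisely the bound $\Delta_\ell^i \ge 2 \Delta_{\ell+1}^{i+1}$ supplied by Definition~\ref{def:delta:is}.

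No serious obstacle arises: the argument is essentially bookkeeping. The only conceptual point is to recognize that the sets $M_\ell^i(\HH_i^{(j+1)})$ are inserted into the algorithm precisely so as to cap the degree of each $\ell$-subset at roughly $\Delta_\ell^i/2$, and that the factor of $2$ in the definition $\Delta_\ell^i = \max\{2\Delta_{\ell+1}^{i+1}, p \Delta_\ell^{i+1}\}$ provides exactly the slack needed to absorb one final increment of size at most $\Delta_{\ell+1}^{i+1}$.
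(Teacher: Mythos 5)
Your proof is correct and takes essentially the same approach as the paper: bound each per-step increment $d_{j+1} - d_j$ by $\Delta_{\ell+1}^{i+1}$ via the fact that new edges of $\HH_i$ containing $T$ correspond to edges of $\HH_{i+1}$ containing $T \cup \{u_j\}$, then observe that the cleaning step freezes $\deg_{\HH_i^{(j)}}(T)$ once it crosses $\Delta_\ell^i/2$, giving $\deg_{\HH_i}(T) < \Delta_\ell^i/2 + \Delta_{\ell+1}^{i+1} \le \Delta_\ell^i$. The only tiny imprecision is that your case split formally omits the possibility that the threshold is first crossed exactly at $j^* = b$, but the same jump bound handles that case trivially.
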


\begin{proof}
  The crucial observation is that if
  \[
  \deg_{\HH_i^{(j)}}(T) \ge \frac{\Delta_\ell^i}{2}
  \]
  for some $T \in \binom{V(\HH)}{\ell}$ and $j \in [b]$, then all edges containing $T$ are removed from $\A_{i+1}^{(j)}$ and hence no more such edges are added to $\HH_i$. It follows that $\deg_{\HH_i}(T) = \deg_{\HH_i^{(j)}}(T)$. Moreover, when we extend $\HH_{i}^{(j-1)}$ to $\HH_{i}^{(j)}$, then we only add to it sets $D$ such that $D \cup \{u_j\} \in \A_{i+1}^{(j-1)} \subseteq \HH_{i+1}$ and hence 
  \[
  \deg_{\HH_i^{(j)}}(T) - \deg_{\HH_i^{(j-1)}}(T) \le \deg_{\HH_{i+1}}(T \cup \{u_j\}) \le \Delta_{|T|+1}(\HH_{i+1}).
  \]
  It follows that
  \[
  \Delta_\ell(\HH_i) \le \frac{\Delta_\ell^i}{2} + \Delta_{\ell+1}(\HH_{i+1}) \le \frac{\Delta_\ell^i}{2} + \Delta_{\ell+1}^{i+1} \le \Delta_\ell^i
  \]
  where the last inequality follows from~\eqref{def:delta}.
\end{proof}

Next, let us establish an easy bound on the numbers $\Delta_1^i$.

\begin{lemma}
  \label{lemma:Delta-facts}
    $\Delta_1^i \le c 2^k p^{k-i} \frac{e(\HH)}{v(\HH)}$ for every $i \in \{1, \ldots, k\}$.
    \end{lemma}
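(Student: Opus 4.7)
The plan is to establish a more general bound on $\Delta_\ell^i$ for all $1 \le \ell \le i \le k$ by reverse induction on $i$, and then specialize to $\ell = 1$. Specifically, I would aim to prove the slightly stronger statement
\[
\Delta_\ell^i \le c \cdot 2^{k-i} \cdot p^{k-i+\ell-1} \cdot \frac{e(\HH)}{v(\HH)} \qquad \text{for every } 1 \le \ell \le i \le k,
\]
from which the lemma follows immediately by setting $\ell = 1$ and noting $2^{k-i} \le 2^k$.

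The base case $i = k$ is exactly the hypothesis of the lemma: by definition $\Delta_\ell^k = \Delta_\ell(\HH)$, and the assumption on $\HH$ gives $\Delta_\ell(\HH) \le c p^{\ell-1} e(\HH)/v(\HH)$, which matches the claimed bound since $2^{k-k} = 1$ and $k - k + \ell - 1 = \ell - 1$.

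For the inductive step, assume the bound holds for some $i+1$ and all $\ell' \in [i+1]$. Fix $\ell \in [i]$. By definition \eqref{def:delta}, $\Delta_\ell^i = \max\{2\Delta_{\ell+1}^{i+1},\, p\Delta_\ell^{i+1}\}$. Applying the inductive hypothesis to each of the two terms (legal since $\ell+1 \le i+1$),
\[
2\Delta_{\ell+1}^{i+1} \le 2 \cdot c\, 2^{k-i-1} p^{k-i+\ell-1} \frac{e(\HH)}{v(\HH)} = c\, 2^{k-i} p^{k-i+\ell-1} \frac{e(\HH)}{v(\HH)},
\]
\[
p\Delta_\ell^{i+1} \le p \cdot c\, 2^{k-i-1} p^{k-i+\ell-2} \frac{e(\HH)}{v(\HH)} = c\, 2^{k-i-1} p^{k-i+\ell-1} \frac{e(\HH)}{v(\HH)}.
\]
The maximum of the two right-hand sides is $c\, 2^{k-i} p^{k-i+\ell-1} e(\HH)/v(\HH)$, completing the induction.

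There is no real obstacle here: the lemma is a routine consequence of unwinding the recursion in Definition~\ref{def:delta:is}, and the only mild subtlety is choosing an inductive statement strong enough to carry through (i.e., keeping track of $\Delta_\ell^i$ for all $\ell$ simultaneously, rather than just $\ell = 1$). The factor $2^k$ in the conclusion is a slack generous bound, since in fact the induction yields the sharper $2^{k-i}$.
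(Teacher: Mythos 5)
Your proof is correct and takes essentially the same approach as the paper: both arguments unwind the recursion in Definition~\ref{def:delta:is} by induction. The paper phrases its induction as the exact identity $\Delta_\ell^i = 2^d p^{k-i-d}\Delta_{d+\ell}(\HH)$ for some $d \in \{0,\ldots,k-i\}$ (equation~\eqref{eq:Deltali}) and then plugs in the hypothesis on $\Delta_{d+\ell}(\HH)$; you instead carry the bound through the induction directly, which yields the mildly sharper factor $2^{k-i}$ in place of $2^k$, but this is only a cosmetic difference.
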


\begin{proof}
  To prove the lemma, simply note that, by the definition of $\Delta_\ell^i$, for every $i \in [k]$ and every $\ell \in [i]$,
  \begin{equation}
    \label{eq:Deltali}
    \Delta_\ell^i =  2^dp^{k-i-d}\Delta_{d+\ell}(\HH) \quad \text{for some $d \in \{0, \ldots, k-i\}$}.
  \end{equation}
  One easily proves~\eqref{eq:Deltali} by induction on $k-i$. Intuitively, $d$ in~\eqref{eq:Deltali} is the number of times that the first term in the maximum in~\eqref{def:delta} is larger than the second term when following the recursive definition of $\Delta_\ell^i$ back to $\Delta_{d+\ell}^k$.
 
  Since $\Delta_\ell(\HH) \le c \cdot p^{\ell - 1} \frac{e(\HH)}{v(\HH)}$, as in the statement of Proposition~\ref{prop:main}, it follows from~\eqref{eq:Deltali} that
  \[
  \Delta_1^i \le \max_{0 \le d \le k - i}\left\{ 2^d p^{k-i-d} \Delta_{d+1}(\HH)  \right\} \le \max_{0 \le d \le k - i}\left\{ 2^d p^{k-i-d} \cdot c p^{d} \cdot \frac{e(\HH)}{v(\HH)} \right\} \le c \cdot 2^k p^{k-i}\frac{e(\HH)}{v(\HH)},
  \]
  as required.
\end{proof}

Finally, we show that if $\HH_{i+1}$ satisfies (P\ref{item:WAlg-prop-3}) and (P\ref{item:WAlg-prop-4}), then either $\HH_{i+1}$ also satisfies (P\ref{item:WAlg-prop-4}) or we have $|A_i| \le (1-c_i)v(\HH)$. Recall that $c_i = (ck2^{k+1})^{i-k}$.

\begin{lemma}
  \label{lemma:P4Ai}
  Let $i \in [k-1]$ and suppose that $e(\HH_{i+1}) \ge c_{i+1} p^{k - (i+1)} e(\HH)$ and that $\Delta_\ell(\HH_{i+1}) \le \Delta_\ell^{i+1}$ for every $\ell \in [i+1]$. Then either 
  \begin{equation}
    \label{eq:eHi-large}
    e(\HH_i) \ge \frac{p}{c \cdot 2^{k+1} k} e(\HH_{i+1}) \ge c_i p^{k-i} e(\HH)
  \end{equation}
  or $|A_i| \le (1 - c_i)v(\HH)$.
\end{lemma}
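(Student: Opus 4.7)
I plan to argue as follows. If the Scythe Algorithm halts at step~(1) for some $j$ (i.e.\ $I \cap V\big(\A_{i+1}^{(j)}\big) = \emptyset$), then $A_i = \emptyset$ and the second alternative holds trivially. So I may assume the algorithm runs all $b = p v(\HH)$ steps and produces $A_i = V\big(\A_{i+1}^{(b)}\big)$. Writing $r_j$ for the rank of $u_j^{alg}$ in the max-degree order on $V\big(\A_{i+1}^{(j)}\big)$, we have $|A_i| = v(\HH) - \sum_{j=0}^{b-1} r_j$. I may further assume $\sum_j r_j < c_i v(\HH)$ (i.e.\ $|A_i| > (1 - c_i) v(\HH)$), for otherwise the second alternative again holds.

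\textbf{Edge partition and bounds.} The key step is to partition the edges of $\HH_{i+1}$ by their fate: each edge either survives in $\A_{i+1}^{(b)}$, or is removed at some step~$j$ for exactly one of three disjoint reasons---(a) it contains $u_j^{alg}$ (and so contributes an edge to $\HH_i$), (b) it avoids $u_j^{alg}$ but meets $W(u_j^{alg})$, or (c) it avoids $W(u_j^{alg})$ but contains some $T \in \bigcup_\ell M_\ell^i(\HH_i^{(j+1)})$. This yields the identity
\[
e(\HH_{i+1}) = e\big(\A_{i+1}^{(b)}\big) + e(\HH_i) + N_b + N_c,
\]
where $N_b$ and $N_c$ are the totals of type-(b) and type-(c) edges. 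One bounds $N_b \le c_i v(\HH) \Delta_1^{i+1}$ using that each type-(b) edge hits $\bigcup_j \big(W(u_j^{alg}) \setminus \{u_j^{alg}\}\big)$, a set of size at most $\sum_j r_j$, and each such vertex has degree at most $\Delta_1^{i+1}$. Combining Lemma~\ref{lemma:Delta-facts}, the hypothesis $e(\HH_{i+1}) \ge c_{i+1} p^{k - i - 1} e(\HH)$, and the relation $c_{i+1}/c_i = c k 2^{k+1}$ then gives $N_b \le e(\HH_{i+1})/(2k)$. For $N_c$, double counting $\ell$-subsets of edges of $\HH_i$ yields $|M_\ell^i(\HH_i)| \le 2\binom{i}{\ell} e(\HH_i)/\Delta_\ell^i$, and since each $T \in M_\ell^i(\HH_i)$ lies in at most $\Delta_\ell^{i+1} \le \Delta_\ell^i/p$ edges of $\HH_{i+1}$, summing over $\ell$ gives $N_c \le \tfrac{2(2^i - 1)}{p} e(\HH_i)$.

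\textbf{The case split.} I would finish by splitting on the size of $e\big(\A_{i+1}^{(b)}\big)$. In Case~A, where $e\big(\A_{i+1}^{(b)}\big) \le e(\HH_{i+1})/2$, the identity together with the two bounds yields
\[
\left(1 + \frac{2(2^i - 1)}{p}\right) e(\HH_i) \ge \frac{k-1}{2k}\, e(\HH_{i+1}),
\]
and a short manipulation (using $p \le 1$, $i \le k - 1$, $c \ge 1$, and $k \ge 2$) transforms this into $e(\HH_i) \ge p e(\HH_{i+1})/(c \cdot 2^{k+1} k)$. In Case~B, where $e\big(\A_{i+1}^{(b)}\big) > e(\HH_{i+1})/2$, I would invoke the max-degree property: at each step~$j$, the degree $d_j := \deg_{\A_{i+1}^{(j)}}(u_j^{alg})$ is at least the average degree of $\A_{i+1}^{(j)}\big[V\big(\A_{i+1}^{(j)}\big) \setminus \{u_1, \ldots, u_{r_j - 1}\}\big]$, giving $d_j \ge (i+1)(e_j - r_j \Delta_1^{i+1})/v(\HH)$. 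Summing and using $e_j \ge e\big(\A_{i+1}^{(b)}\big) > e(\HH_{i+1})/2$ together with the bound $c_i v(\HH) \Delta_1^{i+1} \le e(\HH_{i+1})/(2k)$ yields $e(\HH_i) \ge (i + 1) p e(\HH_{i+1})/4$, far stronger than needed.

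\textbf{Main obstacle.} The principal difficulty is the delicate alignment of constants. The inductive choice $c_{i+1}/c_i = c k 2^{k+1}$ is engineered precisely so that $N_b$ is absorbed below $e(\HH_{i+1})/(2k)$, while the factor $p$ in the recursion $\Delta_\ell^i \ge p \Delta_\ell^{i+1}$ ensures that $N_c$ grows only like $e(\HH_i)/p$. Case~A is the tight one, where these ingredients must combine cleanly with the case threshold to deliver the stated constant $p/(c \cdot 2^{k+1} k)$.
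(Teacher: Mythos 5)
Your proof is correct and rests on the same two underlying ideas as the paper's -- charging removed edges of $\A_{i+1}$ to the segments $W(u_j)$ and to the high-degree families $M_\ell^i$, and using the max-degree property of the Scythe Algorithm -- but your bookkeeping differs in a pleasant way. The paper proceeds by a per-step removal bound (each step removes at most $|W(u_j)|\Delta_1^{i+1} + \sum_\ell |M_\ell^i(\HH_i^{(j+1)}) \setminus M_\ell^i(\HH_i^{(j)})|\Delta_\ell^{i+1}$ edges) and, \emph{assuming the failure of the easy case} $(i+1)e\big(\A_{i+1}^{(b)}\big) \ge e(\HH_{i+1})$, establishes the intermediate dichotomy ``$\sum_j|W(u_j)|$ large or some $|M_\ell^i(\HH_i)|$ large,'' each branch then yielding one of the two alternatives. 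You instead \emph{assume from the outset} the negation $|A_i| > (1-c_i)v(\HH)$, i.e.\ $\sum_j r_j < c_i v(\HH)$, which immediately caps $N_b$, and then package the removal bookkeeping as a global edge-partition identity $e(\HH_{i+1}) = e(\A_{i+1}^{(b)}) + e(\HH_i) + N_b + N_c$, explicitly separating the type-(a) edges (which are precisely the multi-edges of $\HH_i$) from the rest. Your Case~A then corresponds to the paper's Case~1, with the same handshaking bound $|M_\ell^i(\HH_i)| \le 2\binom{i}{\ell}e(\HH_i)/\Delta_\ell^i$ driving the tight constant; your Case~B corresponds to the paper's ``easy'' case. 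The differences are: (i) your case threshold is $e(\A_{i+1}^{(b)}) \lessgtr e(\HH_{i+1})/2$ rather than $e(\HH_{i+1})/(i+1)$, which works because your accounting is finer; (ii) in Case~B you lower-bound $d_j$ against $e_j - r_j\Delta_1^{i+1}$ and then invoke $\sum_j r_j < c_i v(\HH)$, whereas the paper compares $d_j$ directly against $e(\A_{i+1}^{(j+1)})$ via $\tilde\A_{i+1}^{(j)}$, which is slightly cleaner and avoids the additive term $e(\HH_{i+1})/(2k)$ that your bound must absorb (harmless here, since $b \ge 1$ and $k \ge 2$ force $b \ge 2/k$, but worth noting). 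Both organizations deliver the stated constant; yours arguably makes the role of the recursion $c_{i+1}/c_i = ck2^{k+1}$ and of the relation $\Delta_\ell^i \ge p\Delta_\ell^{i+1}$ more transparent.
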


\begin{proof}
  If the Scythe Algorithm stops in step~(\ref{item:WAlg-0}), then $|A_i| = 0$ and there is nothing to prove. Hence, we may assume that steps~(\ref{item:WAlg-1})--(\ref{item:WAlg-3}) are executed $b$ times. Note that, for each $j \in \{0, \ldots, b-1\}$, we have
  \begin{equation}
    \label{eq:eHi-change}
    e\big( \HH_i^{(j+1)} \big) - e\big(\HH_i^{(j)} \big) = \deg_{\A_{i+1}^{(j)}}(u_j).    
  \end{equation}

  By the definition of the max-deg order, the right-hand side of~\eqref{eq:eHi-change} is at least the average degree of the hypergraph $\tilde{\A}_{i+1}^{(j)}$, the subhypergraph of $\A_{i+1}^{(j)}$ induced by the set $\big(V\big(\A_{i+1}^{(j)}\big) \setminus W(u_j)\big) \cup \{u_j\}$. Therefore, by the definition of $\A_{i+1}^{(j+1)}$, we have
  \[
  e\big( \HH_i^{(j+1)} \big) - e\big(\HH_i^{(j)} \big) \ge \frac{(i+1)e\big( \tilde{\A}_{i+1}^{(j)} \big)}{v\big( \tilde{\A}_{i+1}^{(j)} \big)} \ge  \frac{(i+1)e\big( \A_{i+1}^{(j+1)} \big)}{v\big( \HH \big)}.
  \]
  Hence, if $(i+1)e\big( \A_{i+1}^{(j+1)} \big) \ge e\big( \HH_{i+1} \big)$ for every $j \in \{0, \ldots, b-1\}$, then
  \[
  e(\HH_i) \ge \sum_{j = 0}^{b-1}  \frac{(i+1)e\big( \A_{i+1}^{(j+1)} \big)}{v\big( \HH \big)} \ge b \cdot \frac{e(\HH_{i+1})}{v(\HH)} = p \cdot e(\HH_{i+1}),
  \]
 since $b = p \cdot v(\HH)$, as required. Thus, we may assume that for some $j$,
  \begin{equation}
    \label{eq:eAb-small}
    e\big( \A_{i+1}^{(b)} \big) \le e\big( \A_{i+1}^{(j+1)} \big) < \frac{e\big( \HH_{i+1} \big)}{i+1}.    
  \end{equation}
  Intuitively, \eqref{eq:eAb-small} means that while running the Scythe Algorithm on $\HH_{i+1}$ and $I$, many edges are removed from $\A_{i+1}$ (that is, $\HH_{i+1}$) in step~(\ref{item:WAlg-3}). This may happen for one of the following two reasons: either many of the initial segments $W(u_j)$ are long or one of the families $M_\ell^i(\HH_i)$ of sets with high degree in $\HH_i$ is large.

  \begin{claim*}
    Either
    \[
    \sum_{j=0}^{b-1} | W(u_j) | \ge \frac{1}{4\Delta_1^{i+1}} \cdot e(\HH_{i+1})
    \]
    or for some $\ell \in [i]$,
    \[
    \left| M_\ell^i \big( \HH_i \big) \right| \ge \frac{1}{2(i+1)\Delta_\ell^{i+1}} \cdot e(\HH_{i+1}).
    \]
  \end{claim*}
  
  \begin{proof}[Proof of claim]
    Recall that $\A_{i+1}^{(0)} = \HH_{i+1}$ and observe that for every $j \in \{0, \ldots, b-1\}$,
    \begin{equation}
      \label{eq:eAi-change}
      e\big( \A_{i+1}^{(j)} \big) - e\big( \A_{i+1}^{(j+1)} \big) \le | W(u_j) | \cdot \Delta_1(\HH_{i+1})  + \sum_{\ell=1}^i \left| M_\ell^i \big( \HH_{i}^{(j+1)} \big) \setminus M_\ell^i \big( \HH_{i}^{(j)} \big) \right| \cdot \Delta_\ell(\HH_{i+1}).
    \end{equation}
    Inequality~\eqref{eq:eAi-change} follows since in step (\ref{item:WAlg-3}) of the Scythe Algorithm, we remove from $\A_{i+1}^{(j)}$ only the edges that contain either a vertex of $W(u_j)$ or a member of $M_\ell^i \big( \HH_{i}^{(j+1)} \big)$ for some $\ell \in [i]$. Thus, since $\Delta_\ell(\HH_{i+1}) \le \Delta_\ell^{i+1}$ for every $\ell \in [i]$, summing~\eqref{eq:eAi-change} over all $j$, we get
    \[
    e\big( \HH_{i+1} \big) - e(\A_{i+1}^{(b)}) \le \sum_{j=0}^{b-1} |W(u_j)| \cdot \Delta_1^{i+1} + \sum_{\ell = 1}^i \left| M_\ell^i \big( \HH_{i}^{(b)} \big) \right| \cdot  \Delta_\ell^{i+1}.
    \]
    Since we assumed that $e(\A_{i+1}^{(b)}) < e\big( \HH_{i+1} \big) / (i+1)$, see~\eqref{eq:eAb-small}, and $\HH_i = \HH_i^{(b)}$, it follows that if 
    \[
    \sum_{j=0}^{b-1} \big| W(u_j) \big| \cdot \Delta_1^{i+1} < \frac{e(\HH_{i+1})}{4} \le \frac{i}{2(i+1)} \cdot e(\HH_{i+1}),
    \]
    then
    \[
    \left| M_\ell^i \big( \HH_i \big) \right| \cdot \Delta_\ell^{i+1} \ge \frac{1}{2(i+1)} \cdot e(\HH_{i+1}) \quad \text{for some $\ell \in [i]$},
    \]
    as claimed.
  \end{proof}
  
  Finally, let us deal with the two cases implied by the claim. In the remainder of the proof, we will show that if $M_\ell^i \big( \HH_i \big)$ is large for some $\ell \in [i]$, then $e(\HH_i)$ is large and if $\sum_{j=0}^{b-1} | W(u_j) |$ is large, then $|A_i|$ is small.
  
  \medskip
  \noindent
  \textbf{Case 1:} $\left| M_\ell^i \big( \HH_i \big) \right| \ge \frac{1}{2(i+1)\Delta_\ell^{i+1}} \cdot e(\HH_{i+1})$ for some $\ell \in [i]$.
  \smallskip

Since $\deg_{\HH_i}(T) \ge \Delta_\ell^i/2$ for every $T \in M_\ell^i \big( \HH_i \big)$, it follows by the handshaking lemma that
  \begin{equation}
    \label{eq:edges2}
    e(\HH_i) = \binom{i}{\ell}^{-1} \sum_{T \in \binom{V(\HH)}{\ell}} \deg_{\HH_{i}}(T) \ge \frac{\big| M_\ell^i(\HH_i) \big| \cdot \Delta_\ell^i}{2\binom{i}{\ell}}.
  \end{equation}
  Recalling that $\Delta_\ell^i \ge p \Delta_{\ell}^{i+1}$, see~\eqref{def:delta}, we have
  \[
  e(\HH_{i}) \ge \frac{e(\HH_{i+1})}{4(i+1) \binom{i}{\ell}} \cdot  \frac{\Delta_\ell^i}{\Delta_\ell^{i+1}} \ge \frac{p}{2^{i+2}(i+1)} \cdot e(\HH_{i+1}) \ge \frac{p}{2^{k+1} k } \cdot  e(\HH_{i+1}),
  \]
  as required.
  
  \medskip
  \noindent
  \textbf{Case 2:} $\sum_{j=0}^{b-1} |W(u_j)| \ge \frac{1}{4\Delta_1^{i+1} } \cdot e(\HH_{i+1})$.
  \medskip

  We claim that in this case, $|A_i| \le (1 - c_i) v(\HH)$. Indeed, we have
  \[
  v(\HH) - |A_i| = v\big( \A_{i+1}^{(0)} \big) - v\big( \A_{i+1}^{(b)} \big) = \sum_{j=0}^{b-1} |W(u_j)| \ge \frac{e(\HH_{i+1})}{4\Delta_1^{i+1}}.
  \]
  Recall that $\Delta_1^{i+1} \le c 2^k p^{k-i-1} \frac{e(\HH)}{v(\HH)}$ by Lemma~\ref{lemma:Delta-facts}. Thus,
  \[
  v(\HH) - |A_i|  \ge \frac{p^{i+1-k}}{c 2^{k+2}} \cdot \frac{v(\HH)}{e(\HH)} \cdot e(\HH_{i+1}) \ge c_i v(\HH),
  \]
  since $e(\HH_{i+1}) \ge c_{i+1} p^{k - (i+1)} e(\HH)$ and $c_{i+1} / (c2^{k+2}) \ge c_i$.
\end{proof}

\subsection{The proof of Proposition~\ref{prop:main} and Theorem~\ref{thm:main}}

\begin{proof}[Proof of Proposition~\ref{prop:main}] 
  Let $k$ be an integer and let $c$ be a positive constant. Furthermore, let $p \in (0,1)$ and let $\HH$ be a $k$-uniform hypergraph that satisfy the assumptions of Proposition~\ref{prop:main}. Let $\delta = (ck2^{k+1})^{-k}$ and $b = pv(\HH)$. We will use the Scythe Algorithm, described in Section~\ref{sec:WSAlg}, to construct a family $\S$ and functions $f_0$ and $g_0$ as in the statement of Proposition~\ref{prop:main}. We  obtain them by running the following algorithm (with $\HH_k = \HH$) on every independent set $I \in \I(\HH)$. We shall define $f_0$ somewhat implicitly by defining a function $f_0^* \colon \I(\HH) \to \cP(V(\HH))$ that is constant on the set $g_0^{-1}(S)$ for every $S \in \S$.

  \begin{Proc}
    Given an $I \in \I(\HH)$, set $i = k - 1$ and repeat the following:
    \begin{enumerate}[(1)]
    \item
      Apply the Scythe Algorithm to $\HH_{i+1}$ and $I$. Suppose that it outputs $\HH_i$, $A_i$ and $B_i$.
    \item
      \label{item:Alg-2}
      If $|A_i| \le (1 - \delta) v(\HH)$, then set $q = i$, $r = i+1$ and STOP.
    \item
      If $i > 1$, then set $i = i - 1$. Otherwise, set $q = r = 1$ and STOP.
    \end{enumerate}
  \end{Proc}

  Let $I$ be an independent set and let us execute the above procedure (with $\HH_k = \HH$) on $I$. We claim that for every $i \in \{r, \ldots, k\}$, the hypergraph $\HH_i$ satisfies properties (P\ref{item:WAlg-prop-1})--(P\ref{item:WAlg-prop-4}) defined in Section~\ref{sec:WSAlg}. This follows by induction on $k - i$. The base of the induction, the case $i = k$, follows vacuously from the definitions of $c_k$ and $\Delta_\ell^k$ for $\ell \in [k]$. The inductive step follows from Lemmas~\ref{lemma:WAlg-basics}, \ref{lemma:Delta}, and~\ref{lemma:P4Ai}. To see this, note that since $|A_i| > (1-\delta)v(\HH) \ge (1-c_i)v(\HH)$ for all $i \in \{r, \ldots, k-1\}$, then~\eqref{eq:eHi-large} in Lemma~\ref{lemma:P4Ai} always holds.

  Now, let us define $g_0(I)$ and $f_0^*(I)$. Suppose first that $r > 1$ and note that in this case, the algorithm stopped in step (\ref{item:Alg-2}), which means that $|A_q| \le (1 - \delta) v(\HH)$; we set 
  \[
  g_0(I) = B_{k-1} \cup \ldots \cup B_q \quad \text{and} \quad f_0^*(I) = A_q.
  \]
  On the other hand, if $r = 1$, then we set
  \[
  g_0(I) = B_{k-1} \cup \ldots \cup B_1 \quad \text{and} \quad f_0^*(I) = \big\{ v \in V(\HH_1) \colon \{v\} \not\in \HH_1 \big\}.
  \]
  Finally, we let
  \[
  \S  = \{ g_0(I) \colon I \in \I(\HH) \}.
  \]
  We will define $f_0$ by letting $f_0(S) = f_0^*(I)$ for some $I \in g_0^{-1}(S)$. We first show that this definition will not depend on the choice of $I$. In fact, we shall prove a slightly stronger statement, which also establishes the consistency property of $g_0$ stated in the final line of Proposition~\ref{prop:main}.

  \begin{claim*}
    Suppose that for some $I, I' \in \I(\HH)$, $g_0(I) \subseteq I'$ and $g_0(I') \subseteq I$. Then $g_0(I) = g_0(I')$ and $f_0^*(I) = f_0^*(I')$.
  \end{claim*}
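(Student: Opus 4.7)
The plan is to run the \emph{Constructing $g_0$ and $f_0^*$} procedure in parallel on $I$ and $I'$ and to show, step by step, that the two executions produce identical output. Write $\HH_i(I)$, $B_i(I)$, $A_i(I)$, $q(I)$ for the objects generated while processing $I$, with analogous notation for $I'$. The heart of the argument is a downward induction on $j$ establishing that for every $j \in \{\max(q(I), q(I')), \ldots, k-1\}$ the triples produced at step $j$ by the two procedures coincide:
\[
\HH_j(I) = \HH_j(I'), \qquad B_j(I) = B_j(I'), \qquad A_j(I) = A_j(I').
\]

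For the induction, the base case and the inductive step are handled uniformly. Assume $\HH_{j+1}(I) = \HH_{j+1}(I')$ (trivially true when $j+1 = k$, since both equal $\HH$). I apply Lemma~\ref{lemma:WAlg-consistency} to this common hypergraph with inputs $I$ and $I'$. The hypothesis of that lemma requires $B_j(I) \subseteq I'$ and $B_j(I') \subseteq I$; these follow because the procedure guarantees $B_j(I) \subseteq g_0(I)$ for every $j \ge q(I)$ (and symmetrically for $I'$), while the standing assumption gives $g_0(I) \subseteq I'$ and $g_0(I') \subseteq I$. The conclusion of the lemma delivers the desired equalities at level $j$, and Lemma~\ref{lemma:WAlg-basics}(\ref{item:WAlg-basics-3}) combined with the equal $B_j$'s shows that $\HH_j(I) = \HH_j(I')$ too, closing the induction.

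To finish, I would deduce $q(I) = q(I')$. Assuming without loss of generality $q(I) \ge q(I')$, step $q(I)$ is executed by both procedures, and the induction then gives $A_{q(I)}(I) = A_{q(I)}(I')$. Since the procedure on $I$ halted at step $q(I)$, either $q(I) = 1$ (in which case $q(I') \le q(I) = 1$ forces $q(I') = 1$) or $|A_{q(I)}(I)| \le (1 - \delta) v(\HH)$, which then forces the procedure on $I'$ to halt at the same step, giving $q(I') = q(I)$. In both cases $g_0(I) = g_0(I')$ because the two $g_0$'s are the same union of the same $B_j$'s. Finally, $f_0^*(I) = f_0^*(I')$: in the regime $r > 1$ we have $f_0^*(\cdot) = A_{q(\cdot)}$, which agree by the induction, and in the regime $r = 1$ both values are read off from the (equal) hypergraph $\HH_1$. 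The one potential obstacle is bookkeeping around the stopping rule --- the two procedures have different termination indices \emph{a priori} --- but this is neutralized by the construction, which arranges that $B_j$ accumulates into $g_0$, so the hypothesis of Lemma~\ref{lemma:WAlg-consistency} is automatically available at every step that both procedures execute.
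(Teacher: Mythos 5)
Your proposal is correct and follows essentially the same approach as the paper's proof: a downward induction from $i = k-1$, using Lemma~\ref{lemma:WAlg-consistency} to conclude $B_i = B_i'$ at each step and the determinism from Lemma~\ref{lemma:WAlg-basics} to propagate equality of $\HH_i$ and $A_i$. The paper is slightly terser about matching the stopping indices $q$ and $q'$, which you spell out explicitly via the stopping rule, but the underlying argument is the same.
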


  \begin{proof}[Proof of claim]
    Suppose that while running the algorithm on some $I$, we obtain a sequence $(B_{k-1}, \ldots, B_q)$. Since $g_0(I)$ depends solely on $(B_{k-1}, \ldots, B_q)$ and, by Lemma~\ref{lemma:WAlg-basics}, for each $i$, the hypergraph $\HH_i$ and the set $A_i$ depend only on $(B_{k-1}, \ldots, B_i)$, then also $f_0^*(I)$ depends solely on $(B_{k-1}, \ldots, B_q)$. Hence, it suffices to show that if, while running the algorithm on some $I'$ with $B_{k-1} \cup \ldots \cup B_q \subseteq I'$, we obtain a sequence $(B_{k-1}', \ldots, B_{q'}')$ with $B_{k-1}' \cup \ldots \cup B_{q'}' \subseteq I$, then $(B_{k-1}', \ldots, B_{q'}') = (B_{k-1}, \ldots, B_q)$. To this end, let us first observe that, under the above assumptions, for every $i \in [k-1]$, if $\HH_{i+1} = \HH_{i+1}'$, then $B_i = B_i'$. Indeed, note that $B_i$ and $B_i'$ are the outputs of the Scythe Algorithm executed on the inputs $(\HH_{i+1}, I)$ and $(\HH_{i+1}', I')$, respectively. Hence, if $\HH_{i+1} = \HH_{i+1}'$, then since
    \[
    B_i \subseteq B_{k-1} \cup \ldots \cup B_q \subseteq I' \quad \text{and} \quad B_i' \subseteq B_{k-1}' \cup \ldots \cup B_{q'}' \subseteq I,
    \]
    then Lemma~\ref{lemma:WAlg-consistency} implies that $B_i = B_i'$. Since clearly $\HH_k = \HH_k' = \HH$ and, as noted before, for each $i$, $\HH_{i+1}$ depends only on $(B_{k-1}, \ldots, B_{i+1})$, it follows that $B_i = B'_i$ for all $i$, as required.
  \end{proof}

  By the above claim, we can define $f_0$ by letting, for every $S \in \S$, $f(S) = f_0^*(I)$ for any $I \in g_0^{-1}(S)$. Finally, let us show that the $\S$, $g_0$, and $f_0$, which we have just defined, satisfy the required conditions, that is, for all $I, I' \in \I(\HH)$,
  \begin{enumerate}[(i)]
  \item
    \label{item:cond-S}
    $|S| \le (k-1)pv(\HH)$ for every $S \in \S$,
  \item
    \label{item:cond-gfI}
    $g_0(I) \subseteq I \subseteq f_0(g_0(I)) \cup g_0(I)$,
  \item
    \label{item:cond-f-size}
    $|f_0(g_0(I))| \le (1-\delta)v(\HH)$,
  \item
    \label{item:cond-g-consistency}
    $g_0(I) \subseteq I'$ and $g_0(I') \subseteq I$ imply that $g_0(I) = g_0(I')$.
  \end{enumerate}

  To see (\ref{item:cond-S}), simply recall that $|B_i| \le p v(\HH)$ for every $i \in [k-1]$. To see (\ref{item:cond-gfI}), note that $B_i \subseteq I \subseteq A_i \cup B_i$  for every $i \in \{q, \ldots, k-1\}$, by Lemma~\ref{lemma:WAlg-basics}, that $I$ is an independent set in $\HH_1$ (if $r = 1$) and, crucially, that $f_0(g_0(I)) = f_0^*(I)$. To see (\ref{item:cond-f-size}), note that if $r > 1$, then $|A_q| \le (1 - \delta)v(\HH)$, see step (\ref{item:Alg-2}) of the algorithm; if $r = 1$, then since $\Delta_1(\HH_1) \le c \cdot 2^k p^{k-1} e(\HH) / v(\HH)$, by Lemma~\ref{lemma:Delta-facts} and property~(P\ref{item:WAlg-prop-3}), we have
$$\left| \big\{ v \in V(\HH_1) \colon \{v\} \in \HH_1 \big\} \right| \ge \frac{e(\HH_1)}{\Delta_1(\HH_1)} \ge \frac{c_1p^{k-1}e(\HH)}{\Delta_1(\HH_1)} \ge \delta v(\HH),$$
since $\delta \le c_1 / ( c \cdot 2^k)$ and $\HH_1$ satisfies property (P\ref{item:WAlg-prop-4}), so $e(\HH_1) \ge c_1p^{k-1}e(\HH)$. Finally, (\ref{item:cond-g-consistency}) follows directly from the claim.
\end{proof}

\begin{proof}[Proof of Theorem~\ref{thm:main}] 
  The theorem follows by applying Proposition~\ref{prop:main} a bounded number of times. Given an integer $k$ and positive reals $c$ and $\eps$, let $\delta = \delta_{\ref{prop:main}}(c/\eps)$ and let
  \[
  C = (k-1) \cdot \left(\frac{1}{\delta} \log \frac{1}{\eps} + 1\right).
  \]
  Let $V$ be a finite set and let $\F$ be an increasing family of subsets of $V$ such that $|A| \ge \eps |V|$ for every $A \in \F$. Let $p \in (0,1)$ and suppose that $\HH$ is a $k$-uniform hypergraph on the vertex set $V$ that is $(\F, \eps)$-dense and satisfies the assumptions of the theorem, that is, 
    \[
  \Delta_\ell(\HH) \le c p^{\ell-1}\frac{e(\HH)}{v(\HH)}
  \]
  for every $\ell \in [k]$. We now show how to construct a family $\S \subseteq \binom{V(\HH)}{\le C p v(\HH)}$ and functions $f \colon \S \to \ol{\F}$ and $g \colon \I(\HH) \to \S$ such that
  \begin{equation}
    \label{eq:fg-cond}
    g(I) \subseteq I \quad \text{and} \quad I \setminus g(I) \subseteq f(g(I))
  \end{equation}
  for every $I \in \I(\HH)$. Similarly as in the proof of Proposition~\ref{prop:main}, we shall define $f$ via a function $f^* \colon \I(\HH) \to \cP(V)$ that is constant on each set $g^{-1}(S)$ with $S \in \S$.
  
  Fix some $I \in \I(\HH)$. Using Proposition~\ref{prop:main}, we shall construct (for some $J \le \frac{1}{\delta} \log \frac{1}{\eps} + 1$) a sequence $(A_j, S_j)_{j=1}^J$ of pairs of subsets of $V$ such that for each $j \in [J]$,
  \[
  S_1 \cup \ldots \cup S_j \subseteq I \subseteq A_j \cup S_1 \cup \ldots \cup S_j.
  \]
  Moreover, $A_J \in \ol{\F}$ while $|S_1 \cup \ldots \cup S_J| \le Cpv(\HH)$. Crucially, the set $A_J$ will depend solely on $S_1 \cup \ldots \cup S_J$. We will let $g(I) = S_1 \cup \ldots \cup S_J$ and $f^*(I) = A_J$.

  \begin{Const}
    Let $S_0 = \emptyset$ and let $A_0 = V$. For $j = 0, 1, \ldots$, do the following:
    \begin{enumerate}[(1)]
    \item
      \label{item:Const-1}
      If $A_j \in \F$, then let $I_j = I \cap A_j$ and apply Proposition~\ref{prop:main} with $c_{\ref{prop:main}} = c/\eps$ and $p_{\ref{prop:main}} = p$ to the hypergraph $\HH[A_j]$ and the set $I_j$ to obtain sets $g_0(I_j)$ and $f_0(g_0(I_j))$ such that $g_0(I_j) \subseteq I_j$ and $I_j \setminus g_0(I_j) \subseteq f_0(g_0(I_j))$. Otherwise, if $A_j \in \ol{\F}$, then STOP.
    \item
      Let $S_{j+1} = g_0(I_j)$ and let $A_{j+1} = f_0(g_0(I_j))$.
    \end{enumerate}
  \end{Const}
  Let us first show that the above procedure is well-defined, that is, that the assumptions of Proposition~\ref{prop:main} are satisfied each time we are in~(\ref{item:Const-1}). To this end, fix some $A \subseteq V$ and note that if $A \in \F$, then, since $\HH$ is $(\F, \eps)$-dense,
  \[
  \Delta_\ell(\HH[A]) \le \Delta_\ell(\HH) \le c p^{\ell-1}\frac{e(\HH)}{v(\HH)} \le \frac{c}{\eps} \cdot p^{\ell-1}\frac{e(\HH[A])}{v(\HH[A])},
  \]
where the last step follows since $e(\HH[A]) \ge \eps \cdot e(\HH)$ and $v(\HH[A]) \le v(\HH)$.
  
  Next, let us show that the above procedure terminates, therefore producing a finite sequence $(A_j, S_j)$ with $j \in [J]$. To this end, let us simply note that by Proposition~\ref{prop:main}, $|A_{j+1}| \le (1-\delta)|A_j|$ for all $j$, $A_0 = V$ and $|A| \ge \eps|V|$ for every $A \in \F$. Moreover, since $A_{J-1} \in \F$, then
  \[
  \eps |V| \le |A_{J-1}| \le (1-\delta)^{J-1} |A_0| = \exp(-(J-1)\delta)|V|
  \]
  and hence $J \le \frac{1}{\delta} \log \frac{1}{\eps} + 1$. It immediately follows that
  \[
  |g(I)| \le \sum_{j=1}^{J} |S_j| \le \sum_{j=1}^J (k-1)pv(\HH[A_j]) \le J(k-1)pv(\HH) \le Cpv(\HH).
  \]
  Finally, let $\S = \{g(I) \colon I \in \I(\HH)\}$. It remains to show that for every $S \in \S$, $f^*$ is constant on $g^{-1}(S)$. Similarly as in the proof of Proposition~\ref{prop:main}, we shall prove a somewhat stronger statement.
  
  \begin{claim*}
    Suppose that for some $I, I' \in \I(\HH)$, $g(I) \subseteq I'$ and $g(I') \subseteq I$. Then $g(I) = g(I')$ and $f^*(I) = f^*(I')$.
  \end{claim*}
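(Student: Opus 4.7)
My plan is to prove the claim by induction on the iteration index $j$ of the \textbf{Construction}, showing that the sequences $(A_j, S_j)$ produced when the procedure is run on $I$ and $I'$ coincide step by step. Writing $(A_j, S_j)$ and $(A_j', S_j')$ for the two sequences, and $J$, $J'$ for their respective lengths, I will establish by induction on $j \ge 0$ the statement: $A_j = A_j'$ and $S_i = S_i'$ for all $i \le j$. The base case $j = 0$ is immediate since $A_0 = V = A_0'$ and $S_0 = \emptyset = S_0'$. Once this induction is in place, it automatically follows that $J = J'$ (since both procedures stop precisely when the common set $A_j$ first fails to lie in $\F$), so $g(I) = \bigcup_{i=1}^J S_i = \bigcup_{i=1}^{J'} S_i' = g(I')$ and $f^*(I) = A_J = A_{J'}' = f^*(I')$, giving the claim.

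For the inductive step, assume $A_j = A_j'$ (call this common set $A$) and $j < \min\{J, J'\}$, so that $A \in \F$. The construction defines $I_j = I \cap A$ and $I_j' = I' \cap A$, both independent in $\HH[A]$, and applies Proposition~\ref{prop:main} to $\HH[A]$ to produce $S_{j+1} = g_0(I_j)$, $A_{j+1} = f_0(g_0(I_j))$ and analogously the primed versions. My goal is to invoke the consistency clause in the last sentence of Proposition~\ref{prop:main} for the hypergraph $\HH[A]$ and the two independent sets $I_j, I_j'$. Concretely, I need to verify the hypotheses $g_0(I_j) \subseteq I_j'$ and $g_0(I_j') \subseteq I_j$. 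This is where the overall hypotheses of the claim feed in: since $g_0(I_j) = S_{j+1}$ is one of the building blocks of $g(I) = \bigcup_{i=1}^J S_i$, we have $g_0(I_j) \subseteq g(I) \subseteq I'$ by the claim's hypothesis, and trivially $g_0(I_j) \subseteq A$, so $g_0(I_j) \subseteq I' \cap A = I_j'$. The symmetric argument gives $g_0(I_j') \subseteq I_j$. The consistency clause then yields $S_{j+1} = g_0(I_j) = g_0(I_j') = S_{j+1}'$, and since $f_0$ is a function of $g_0$, it also gives $A_{j+1} = f_0(S_{j+1}) = f_0(S_{j+1}') = A_{j+1}'$, closing the induction.

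I expect the only subtle point to be the bookkeeping around termination: I must be sure that as long as the induction is still running (i.e., $j < \min\{J, J'\}$), the sets $S_{j+1}$ and $S_{j+1}'$ really are contained in $g(I)$ and $g(I')$ respectively, which is immediate from the very definition $g(I) = S_1 \cup \cdots \cup S_J$ as a union over \emph{all} iterations. Once $A_j = A_j'$ is maintained at every step, the stopping condition $A_j \in \overline{\F}$ triggers simultaneously for both executions, so $J = J'$ is forced rather than assumed. There is no genuine obstacle beyond invoking the consistency property of $g_0$ at each level; the claim is really just a one-line amplification of the consistency statement in Proposition~\ref{prop:main}, propagated through the finitely many iterations of the Construction.
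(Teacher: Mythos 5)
Your proof is correct and follows essentially the same approach as the paper's: an induction on the iteration index, maintaining $A_j = A_j'$ and $S_{j+1} = S_{j+1}'$, with the inductive step powered by the consistency clause of Proposition~\ref{prop:main} applied to $\HH[A_j]$ and the independent sets $I \cap A_j$, $I' \cap A_j$. Your write-up is slightly more explicit than the paper's (e.g., you check $g_0(I_j) \subseteq A$ to confirm $g_0(I_j) \subseteq I_j'$, and you make the simultaneous termination $J = J'$ explicit), but the underlying argument is identical.
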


  \begin{proof}[Proof of claim]
    Suppose that while running the above procedure on some $I$, we generate a sequence $(A_j, S_j)_{j=1}^J$. Since for each $j$, $A_{j+1}$ depends solely on $A_j$ and $S_{j+1}$, where $A_0 = V$, then both $g(I)$ and $f^*(I)$ depend solely on $(S_1, \ldots, S_J)$. Hence, it suffices to show that if, while running the above procedure on some $I'$ with $S_1 \cup \ldots \cup S_J \subseteq I'$, we generate a sequence $(A_j', S_j')_{j=1}^{J'}$ with $S_1' \cup \ldots \cup S_{J'}' \subseteq I$, then $(S_1, \ldots, S_J) = (S_1', \ldots, S_{J'}')$. To this end, it suffices to note that if $A_j = A_j'$, then, since
    \[
    S_{j+1} \subseteq S_1 \cup \ldots \cup S_J \subseteq I' \quad \text{and} \quad  S_{j+1}' \subseteq S_1' \cup \ldots \cup S_{J'}' \subseteq I,
    \]
    by the consistency property of $g_0$ stated in the final line of Proposition~\ref{prop:main}, $S_{j+1} = S_{j+1}'$. Since $A_0 = A_0' = V$ and for each $j$, $A_j$ depends only on $(S_1, \ldots, S_j)$, it follows that $S_j = S_j'$ for all $j$, as required.
  \end{proof}
  Finally, for every $S \in \S$, we let $f(S) = f^*(I)$ for some $I \in g^{-1}(S)$. This completes the proof of Theorem~\ref{thm:main}.
\end{proof}

\section{Szemer\'edi's theorem for sparse sets}

\label{sec:Sz}

In this section, we prove Theorem~\ref{thm:Sz} and derive from it Corollary~\ref{cor:Sz}. Before we get to the proofs, let us first remark that Theorem~\ref{thm:Sz} and Corollary~\ref{cor:Sz} are both sharp up to the value of the constant $C$ in the lower bounds for $p$ and $m$. More precisely, let us make the following two observations.

\begin{enumerate}
\item
  For every $\beta \in (0,1)$, there is a positive $c$ such that if $m \le cn^{1-1/(k-1)}$, then the number of $m$-subsets of~$[n]$ that contain no $k$-term AP is at least $(1-\beta)^m\binom{n}{m}$. To see this, let $\eps = \beta^2$ and observe that if $c$ is sufficiently small and $m \le cn^{1-1/(k-1)}$, then the expected number of $k$-term APs in a random $(1+\eps)m$-subset of~$[n]$ is smaller than $\eps m/2$ and hence by Markov's inequality, at least half of all $(1+\eps)m$-subsets of~$[n]$ contain a subset of size $m$ with no $k$-term AP. Hence\footnote{We assume here, without loss of generality, that $\beta$ (and hence also $\eps$) is sufficiently small.}
  \[
  \text{\#\{$m$-subsets of $[n$] with no $k$-term AP\}} \ge \frac{\binom{n}{(1+\eps)m}}{2\binom{n}{\eps m}} \ge \left(1-\sqrt{\eps}\right)^m \binom{n}{m},
  \]
where the final inequality holds since $\binom{n}{(1+\eps)m} \ge \big( \frac{n}{2m} \big)^{\eps m} \binom{n}{m}$ and $\binom{n}{\eps m} \le \big( \frac{en}{\eps m} \big)^{\eps m}$.
\item 
   There is a positive constant $c$ such that if $p_n \le cn^{-1/(k-1)}$, then
   \[
   \Pr\big( \text{$[n]_{p_n}$ is $(\delta,k)$-Szemer{\'e}di} \big) \to 0 \text{ as $n \to \infty$}.
   \]
   For a (simple) proof of this statement, we refer the reader to~\cite{Sch}.
\end{enumerate}

We shall in fact prove the following somewhat stronger version of Corollary~\ref{cor:Sz}, originally proved by Schacht~\cite{Sch} (the approach of Conlon and Gowers~\cite{CG} yields a somewhat weaker probability estimate).

\begin{cor}
  \label{cor:Sz-strong}
  For every $k \in \N$ and every $\delta \in (0,1)$, there exists a constant $C$ such that for all sufficiently large $n$, if $p \ge Cn^{-1/(k-1)}$, then
  \[
  \Pr\big( \text{$[n]_p$ is $(\delta,k)$-Szemer{\'e}di} \big) \ge 1 - 2\exp(-pn/8).
  \]
\end{cor}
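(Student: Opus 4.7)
The plan is to combine Theorem~\ref{thm:Sz} with Chernoff's inequality via a first-moment union bound over the possible sizes of AP-free subsets of $[n]_p$. Observe that $[n]_p$ fails to be $(\delta,k)$-Szemer\'edi precisely when some $B \subseteq [n]_p$ with $|B| \ge \delta|[n]_p|$ contains no $k$-term AP. Since $|[n]_p| \sim \Bin(n,p)$, Chernoff's inequality gives $\Pr(|[n]_p| < pn/2) \le \exp(-pn/8)$, so it suffices to bound by $\exp(-pn/8)$ the probability of the event $F$ that $[n]_p$ contains an AP-free subset of size at least $m_0 := \lceil \delta pn/2 \rceil$.

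To do this, I would first fix $\beta = \beta(\delta) > 0$ small enough (the choice $\beta = \delta \exp(-1/\delta)/(4e)$ will do), let $C_1 = C_1(\beta,k)$ be the constant produced by Theorem~\ref{thm:Sz} for this $\beta$, and set $C = 2C_1/\delta$. The hypothesis $p \ge Cn^{-1/(k-1)}$ then guarantees $m_0 \ge C_1 n^{1-1/(k-1)}$, so Theorem~\ref{thm:Sz} bounds the number of AP-free $m$-subsets of $[n]$ by $\binom{\beta n}{m}$ for every $m \ge m_0$. Since any fixed $m$-subset of $[n]$ lies inside $[n]_p$ with probability $p^m$, a union bound combined with the standard estimate $\binom{\beta n}{m} \le (e\beta n/m)^m$ yields
\[
\Pr(F) \,\le\, \sum_{m \ge m_0} \binom{\beta n}{m} p^m \,\le\, \sum_{m \ge m_0} \left(\frac{e\beta np}{m}\right)^m \,\le\, \sum_{m \ge m_0} \left(\frac{2e\beta}{\delta}\right)^m.
\]
With the chosen $\beta$, the geometric ratio $2e\beta/\delta$ equals $\exp(-1/\delta)/2$, so the tail is bounded by $2\exp(-m_0/\delta) \le 2\exp(-pn/2)$, which is well below $\exp(-pn/8)$ for large $n$. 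Combined with the Chernoff estimate, this gives the target $2\exp(-pn/8)$.

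The only delicate point is the calibration of $\beta$ against $\delta$. One might be tempted to take $\beta = \delta/(4e)$, which makes the geometric ratio $1/2$, but this only produces a bound of order $2^{-m_0} = 2^{-\delta pn/2}$, which is weaker than $\exp(-pn/8)$ when $\delta$ is small. To avoid this, the ratio $2e\beta/\delta$ must be driven exponentially small in $1/\delta$, so that the leading term $(2e\beta/\delta)^{m_0}$ already beats the target regardless of how small $\delta$ is. Once this calibration is in place, everything else — the Chernoff bound, the union bound, and the entropy-type estimate on $\binom{\beta n}{m}$ — is routine, and Theorem~\ref{thm:Sz} does all of the genuine combinatorial work.
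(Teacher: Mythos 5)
Your argument is correct and follows essentially the same route as the paper: apply Theorem~\ref{thm:Sz} with a carefully calibrated $\beta$ so that the first-moment bound on AP-free subsets of $[n]_p$ of size $\sim \delta pn/2$ is exponentially small in $pn/\delta$, and absorb the case $|[n]_p| < pn/2$ with Chernoff's inequality. The only cosmetic difference is that you sum over all $m \ge m_0$; the paper uses just the single term $m = \delta pn/2$, since monotonicity (an AP-free set of size $\ge m_0$ contains one of size exactly $m_0$) makes the geometric tail unnecessary, which in turn lets it take the slightly larger $\beta = (\delta/2e)\,e^{-1/\delta}$.
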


In the proofs of Theorem~\ref{thm:Sz} and Corollary~\ref{cor:Sz-strong}, and frequently in later sections, we shall need various estimates on binomial coefficients, which we list here for future reference. Let $a$, $b$, and $c$ be integers satisfying $a \ge b \ge c \ge 0$. Then the following inequalities hold:

\begin{minipage}{0.45\textwidth} 
  \begin{align}
    \label{eq:binomial-1}
    \binom{a}{b} & \le \bigg( \frac{ea}{b} \bigg)^b, \\
    \addtocounter{equation}{1}
    \label{eq:binomial-2}
    \binom{a}{b-c} & \le \left(\frac{b}{a-b}\right)^c\binom{a}{b},
  \end{align}
\end{minipage} 
\begin{minipage}{0.45\textwidth} 
  \begin{align}
    \addtocounter{equation}{-2}
    \label{eq:binomial-3}
    \binom{b}{c} & \le \left(\frac{b}{a}\right)^c\binom{a}{c}, \\
    \addtocounter{equation}{1}
    \label{eq:binomial-4}
    \binom{a}{c} & \le \left(\frac{a-c}{b-c}\right)^c\binom{b}{c}.
  \end{align}
\end{minipage}

\smallskip
\noindent
We remark that each inequality above follows easily from the definition of $\binom{a}{b}$.

\begin{proof}[Proof of Corollary~\ref{cor:Sz-strong}]
  Fix $k \in \N$ and $\delta \in (0,1)$, let $\beta = \delta/(2e) \cdot e^{-1/\delta}$, and set $C = 2C_{\ref{thm:Sz}}(\beta,k)/\delta$. Assume that $p \ge Cn^{-1/(k-1)}$, let $m = \delta pn/2$, and let $X_m$ denote the number of $m$-subsets of $[n]_p$ that contain no $k$-term AP. By Theorem~\ref{thm:Sz} and~\eqref{eq:binomial-1}, we have
  \begin{equation}
    \label{eq:ExXm}
    \Pr( X_m > 0 ) \le \Ex[X_m] \le \binom{\beta n}{m}p^m \le \left( \frac{\beta e p n}{m} \right)^m = \left( \frac{2\beta e}{\delta} \right)^m = e^{-m/\delta}.
  \end{equation}
  Let $\A$ denote the event that $[n]_p$ is \emph{not} $(\delta,k)$-Szemer{\'e}di, i.e., that $[n]_p$ contains a subset with $\delta |[n]_p|$ elements and no $k$-term AP. By~\eqref{eq:ExXm} and Chernoff's inequality (see, e.g., \cite[Appendix~A]{AlSp}), it follows that
  \[
  \Pr(\A) \le \Pr\left(\A \wedge |[n]_p| \ge \frac{pn}{2}\right) + \Pr\left( |[n]_p| < \frac{pn}{2} \right) \le \Pr(X_m > 0) + e^{-pn/8} \le 2e^{-pn/8},
  \]
  as required.
\end{proof}

Finally, let us show how to deduce Theorem~\ref{thm:Sz} from Theorem~\ref{thm:main}. Our proof will use the following robust version of Szemer{\'e}di's theorem, which can be proved by a simple averaging argument, originally observed by Varnavides~\cite{Varn}.

\begin{lemma}
  \label{lemma:Varn}
  For every positive $\delta$ and $k \in [n]$, there exists a positive $\eps$ such that the following holds for all sufficiently large $n$. Every subset of $[n]$ with at least $\delta n$ elements contains at least $\eps n^2$ $k$-term APs.
\end{lemma}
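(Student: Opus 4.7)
The plan is to use the standard Varnavides averaging argument, which reduces the ``robust'' statement to a single application of Szemer\'edi's theorem on intervals of bounded length. First, apply Szemer\'edi's theorem with density $\delta/2$ in place of $\delta$ to produce an integer $N = N(\delta/2, k)$ such that every subset of $\{1, \ldots, N\}$ of size at least $(\delta/2)N$ contains a $k$-term AP. We then fix this $N$ for the rest of the argument; it will depend only on $\delta$ and $k$.

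Next, consider the collection $\mathcal{L}$ of all $N$-term APs contained in $[n]$. A direct count shows $|\mathcal{L}| = \Theta(n^2/N)$ and, crucially, every element of $[n]$ lies in roughly the same number of members of $\mathcal{L}$ (up to a lower-order correction from endpoints of $[n]$, which we can absorb since $n$ is large relative to $N$). Consequently, for any $A \subseteq [n]$ with $|A| \ge \delta n$, a uniformly chosen $L \in \mathcal{L}$ satisfies $\mathbb{E}[|A \cap L|] \ge (\delta - o(1))N$. By Markov's inequality applied to the random variable $N - |A\cap L|$, at least a constant fraction (depending only on $\delta$) of the APs $L \in \mathcal{L}$ satisfy $|A \cap L| \ge (\delta/2)N$. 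Call such an $L$ \emph{good}; then the number of good $L$ is at least $\eps' n^2$ for some $\eps' = \eps'(\delta, N) > 0$, and by the choice of $N$ every good $L$ contains a $k$-term AP lying entirely in $A$.

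Finally, we pass from good $N$-APs to distinct $k$-APs in $A$ by noting that each $k$-term AP in $[n]$ is contained in at most $M = M(N,k)$ members of $\mathcal{L}$, where $M$ depends only on $N$ and $k$ (roughly $\binom{N}{2}$ suffices, since an $N$-AP extending a given $k$-AP is determined by the positions of the first and last terms of the $k$-AP inside it). Therefore the number of distinct $k$-term APs in $A$ is at least $\eps' n^2 / M \ge \eps n^2$ for a suitable $\eps = \eps(\delta, k) > 0$. The only genuinely subtle point is keeping the edge effects from the ends of $[n]$ from corrupting the averaging, but since $N$ is fixed before $n \to \infty$, these contribute only an $O(N/n)$ loss that is easily absorbed into the constant $\eps$.
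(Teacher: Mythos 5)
Your overall strategy --- a Varnavides-type averaging reducing to Szemer\'edi's theorem on a single interval of bounded length --- is exactly the right one and is precisely the argument the paper alludes to (it cites Varnavides and omits the proof). However, there is a genuine gap in the averaging step. The claim that every element of $[n]$ lies in roughly the same number of $N$-term APs, with only a lower-order boundary correction, is false. The element $x = 1$ lies in only $\lfloor (n-1)/(N-1) \rfloor$ members of $\mathcal{L}$ (only APs starting at $1$), whereas an element near $n/2$ lies in $\Theta(n)$ of them; the degrees thus vary by a factor of order $N$, and a \emph{constant} fraction of $[n]$ (not $O(N/n)$ of it) has degree a constant factor below the average. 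Consequently, if $A = \{1, \dots, \delta n\}$, a short computation shows $\mathbb{E}[|A \cap L|]$ is of order $\delta^2 N$, not $(\delta - o(1))N$. For small $\delta$ this falls below your Markov threshold $(\delta/2)N$, and the Markov step then yields nothing: the density you feed into Szemer\'edi must be a genuine lower bound on the typical value of $|A \cap L|/N$, and $\delta/2$ is not.

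The standard fix is to restrict $\mathcal{L}$ to $N$-APs whose common difference is at most $D := \lfloor \delta n / (4(N-1)) \rfloor$. Every $x$ in the central interval $[(N-1)D + 1,\, n - (N-1)D]$, which omits at most $\delta n/2$ elements of $[n]$, then lies in \emph{exactly} $ND$ members of $\mathcal{L}$, so $\sum_{x \in A} \deg(x) \ge (\delta n/2) \cdot ND$, while $|\mathcal{L}| \le Dn$, giving $\mathbb{E}[|A \cap L|] \ge \delta N/2$ as you intended; since also $|\mathcal{L}| \ge Dn/2 = \Omega_\delta(n^2/N)$, the rest of your proof (the Markov step and the bound of $\binom{N}{2}$ on the number of $N$-APs extending a fixed $k$-AP) goes through unchanged. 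Alternatively, you may keep all $N$-APs but invoke Szemer\'edi at the smaller density $\delta^2/8$, using the cruder but correct bound $\mathbb{E}[|A \cap L|] \ge \delta^2 N/4$, which follows since at least $\delta n/2$ elements of $A$ lie in $[\delta n/4,\, n - \delta n/4]$ and each such element has degree at least $N \lfloor \delta n / (4(N-1)) \rfloor$.
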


\begin{proof}[Proof of Theorem~\ref{thm:Sz}]
  Given $k \in \N$ and positive $\beta$, let $\delta = \min\{ \beta/2, 1/10 \}$ and let $n \in \N$ be sufficiently large. Let $\HH$ be the $k$-uniform hypergraph of $k$-term APs in $[n]$, i.e., the hypergraph on the vertex set $[n]$ whose edges are all $k$-term APs in $[n]$, let $\F$ denote the family of subsets of $[n]$ with at least $\delta n$ elements, and let $\eps = \eps_{\ref{lemma:Varn}}(\delta, k)$. By Lemma~\ref{lemma:Varn}, the hypergraph $\HH$ is $(\F, \eps)$-dense, provided that $n$ is sufficiently large. Let $p = n^{-1/(k-1)}$ and let $c = 2k^2$. Since $e(\HH) \ge n^2/k^2 \ge 2n^2/c$, it follows that 
  \[
  \Delta_1(\HH) \le k \cdot \frac{n}{k-1} \le 2n \le c \cdot p^{1-1} \frac{e(\HH)}{v(\HH)},
  \]
   for every $\ell \in \{2, \ldots, k-1\}$,
  \[
  \Delta_\ell(\HH) \le \Delta_2(\HH) \le \binom{k}{2} \le 2n^{1/(k-1)} \le c \cdot p^{\ell-1}\frac{e(\HH)}{v(\HH)},
  \]
 and $\Delta_k(\HH)  = 1 \le c \cdot p^{k-1}\frac{e(\HH)}{v(\HH)}$. 
 
  Let $C' = C_{\ref{thm:main}}(k,\eps,c)$, let $C = C'/\delta$, and assume that $m \ge Cn^{1-1/(k-1)} = Cpn$. Note that if $m > \delta n/2$, then $\I(\HH, m) = 0$ by Szemer{\'e}di's theorem, so we may assume that $m \le \delta n /2$. Since $C'pn \le \delta m$, then by Theorem~\ref{thm:main}, there exists a family $\S \subseteq \binom{[n]}{\le C'pn} \subseteq \binom{[n]}{\le \delta m}$ and functions $f \colon \S \to \ol{\F}$ and $g \colon \I(\HH) \to \S$, such that for every $I \in \I(\HH)$,
  \[
  g(I) \subseteq I \quad \text{and} \quad I \setminus g(I) \subseteq f(g(I)).
  \]
  Therefore, using \eqref{eq:binomial-1} and \eqref{eq:binomial-2}, the number of independent sets of size $m$ in $\HH$ can be estimated as follows:
  \begin{align*}
    \label{eq:Sz-IHm}
    |\I(\HH, m)| & \, = \, \sum_{S \in \S} |\{ I \in \I(\HH, m) \colon g(I) = S \}| \le \sum_{S \in \S} \binom{|f(S)|}{m-|S|} \\
    &\, \le \, \sum_{k \le \delta m} \binom{n}{k} \binom{\delta n}{m - k} \le \sum_{k \le \delta m}  \left( \frac{en}{k} \right)^k \left( \frac{m}{\delta n - m} \right)^k \binom{\delta n}{m}. 
  \end{align*}
  Since $m \le \delta n /2$ and the function $x \mapsto (y/x)^x$ is increasing on $(0,y/e)$, it follows that
  \[
  |\I(\HH,m)| \le \sum_{k \le \delta m} \left( \frac{2em}{\delta k} \right)^k \binom{\delta n}{m} \le m \left( \frac{2e}{\delta^2} \right)^{\delta m} \binom{\delta n}{m} \le \binom{\beta n}{m},
  \]
  where the final inequality follows since $\binom{\delta n}{m} \le 2^{-m} \binom{2\delta n}{m}$, by~\eqref{eq:binomial-3}, and since $2^{1/\delta} > 2e / \delta^2$ if $\delta \le 1/10$. This proves Theorem~\ref{thm:Sz}.
\end{proof}

The same proof, combined with an analogue of Lemma~\ref{lemma:Varn} due to Furstenberg and Katznelson~\cite{FK}, yields the following generalization of Theorem~\ref{thm:Sz}, which strengthens both~\cite[Theorem~10.4]{CG} and~\cite[Theorem~2.3]{Sch}. Given a set $F \subseteq \N^\ell$, we call a set  of the form $a + bF = \{a + bx \colon x \in F\}$, with $a \in \N^\ell$ and $b \in \ZZ \setminus \{0\}$, a \emph{homothetic copy} of $F$.

\begin{thm}
  For every positive $\beta$, every $\ell \in \N$, and every finite configuration $F \subseteq \N^\ell$, there exist constants $C$ and $n_0$ such that the following holds. For every $n \in \N$ with $n \ge n_0$, if $m \ge Cn^{\ell - 1/(|F| - 1)}$, then there are at most
  \[
  \binom{\beta n^\ell}{m}
  \]
  $m$-subsets of $[n]^\ell$ that contain no homothetic copy of $F$.
\end{thm}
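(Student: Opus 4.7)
The plan is to repeat, essentially verbatim, the proof of Theorem~\ref{thm:Sz}, with $[n]$ replaced by $[n]^\ell$ and the hypergraph of $k$-term arithmetic progressions replaced by the $k$-uniform hypergraph $\HH$ on vertex set $[n]^\ell$ whose edges are the homothetic copies of $F$ contained in $[n]^\ell$, where $k := |F|$. The two things I need to supply before Theorem~\ref{thm:main} can be applied are suitable boundedness of the edge distribution of $\HH$ and the $(\F, \eps)$-density of $\HH$ with respect to $\F = \{A \subseteq [n]^\ell \colon |A| \ge \delta n^\ell\}$.

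For the parameters of $\HH$, $v(\HH) = n^\ell$, and since a copy $a + bF$ with $a \in \N^\ell$ and $b \in \ZZ \setminus \{0\}$ is determined, up to a multiplicity bounded in terms of the automorphisms of $F$, by the pair $(a, b)$, a direct count gives $e(\HH) = \Theta_F(n^{\ell + 1})$. The key degree bound is that for any two distinct $v_1, v_2 \in [n]^\ell$, once we fix their preimages $x_1, x_2 \in F$ inside a copy $a + bF$, the equation $v_2 - v_1 = b(x_2 - x_1)$ forces $b$ and hence $a$; so $\Delta_2(\HH) \le \binom{k}{2}$ and, a fortiori, $\Delta_\ell(\HH) = O_F(1)$ for all $\ell \ge 2$, while $\Delta_1(\HH) = O_F(n)$ by the analogous one-parameter count. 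Setting $p = n^{-1/(k-1)}$, the hypothesis
\[
\Delta_\ell(\HH) \le c \cdot p^{\ell - 1} \cdot \frac{e(\HH)}{v(\HH)}
\]
of Theorem~\ref{thm:main} then holds for some $c = c(F, \ell)$, with the bottleneck at $\ell = k$.

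For the density of $\HH$, the multi-dimensional density theorem of Furstenberg and Katznelson~\cite{FK} asserts that every $A \subseteq [n]^\ell$ with $|A| \ge \delta n^\ell$ contains at least one homothetic copy of $F$ (for $n$ sufficiently large); a standard averaging argument in the style of Varnavides~\cite{Varn} promotes this to: there exists $\eps = \eps(\delta, F) > 0$ such that every such $A$ contains at least $\eps n^{\ell+1}$ homothetic copies of $F$. This is exactly what is needed to conclude that $\HH$ is $(\F, \eps')$-dense for some $\eps' > 0$ depending only on $\delta$ and $F$.

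With both ingredients in hand, Theorem~\ref{thm:main} yields a family $\S \subseteq \binom{[n]^\ell}{\le C p v(\HH)}$ and functions $f \colon \S \to \ol{\F}$ and $g \colon \I(\HH) \to \S$ with $g(I) \subseteq I \subseteq f(g(I)) \cup g(I)$ for every $I \in \I(\HH)$. Taking $\delta = \min(\beta/2, 1/10)$ and enlarging $C$ so that $Cpv(\HH) = Cn^{\ell - 1/(k-1)} \le \delta m$ whenever $m \ge Cn^{\ell - 1/(k-1)}$, the summation
\[
|\I(\HH, m)| \le \sum_{S \in \S} \binom{|f(S)|}{m - |S|} \le \sum_{j \le \delta m} \binom{n^\ell}{j} \binom{\delta n^\ell}{m - j} \le \binom{\beta n^\ell}{m}
\]
is carried out verbatim as in the proof of Theorem~\ref{thm:Sz}, using the binomial estimates~\eqref{eq:binomial-1}--\eqref{eq:binomial-3}. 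I do not anticipate any real obstacle: the only genuinely new work beyond Theorem~\ref{thm:Sz} is the elementary degree calculation for $\HH$ and the invocation of the robust Furstenberg-Katznelson theorem in place of Varnavides' lemma.
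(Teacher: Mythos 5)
Your proposal is correct and follows exactly the route the paper intends: the paper's own ``proof'' of this theorem is the single sentence ``The same proof, combined with an analogue of Lemma~\ref{lemma:Varn} due to Furstenberg and Katznelson,'' and you have filled in precisely the details that sentence elides --- the degree calculation for the hypergraph of homothetic copies (with $v(\HH) = n^\ell$, $e(\HH) = \Theta(n^{\ell+1})$, $\Delta_1 = O(n)$, $\Delta_{j} = O(1)$ for $j \ge 2$, so the boundedness hypothesis holds with $p = n^{-1/(k-1)}$), the robust multidimensional Szemer\'edi theorem in place of Varnavides, and the same binomial summation as in the proof of Theorem~\ref{thm:Sz}. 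The only tiny inaccuracy is the bound $\Delta_2 \le \binom{k}{2}$, which should be $k(k-1)$ (ordered pairs of preimages in $F$), but as only $\Delta_2 = O_F(1)$ matters, this is immaterial.
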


Finally, using the famous polynomial Szemer\'edi theorem of Bergelson and Leibman~\cite{BL}, the same argument gives a counting version of~\cite[Theorem~10.7]{CG}. 

\begin{thm}
  For every positive $\beta$ and integers $k$ and $r$, there exist constants $C$ and $n_0$ such that the following holds. For every $n \in \N$ with $n \ge n_0$, if $m \ge C n^{1 - 1/kr}$, then there are at most
  \[
  \binom{\beta n}{m}
  \]
  $m$-subsets of $[n]$ that contain no set of the form $\{a, a + d^r, \ldots, a + k d^r \}$.
\end{thm}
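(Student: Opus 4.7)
The plan is to mirror the proof of Theorem~\ref{thm:Sz} verbatim, replacing the hypergraph of $k$-term APs by the $(k+1)$-uniform hypergraph $\HH$ on the vertex set $[n]$ whose edges are all sets of the form $\{a, a+d^r, a+2d^r, \ldots, a+kd^r\}$ with $d \ge 1$ and $a + kd^r \le n$. Easy counts give $v(\HH) = n$ and $e(\HH) = \Theta(n^{1+1/r})$, since for each $d \in \{1, \ldots, \lfloor (n/k)^{1/r}\rfloor\}$ there are $\Theta(n)$ admissible starts $a$; hence $e(\HH)/v(\HH) = \Theta(n^{1/r})$. For the higher-order degrees, $\Delta_1(\HH) = O(n^{1/r})$ (a vertex $v$ occupies one of $k+1$ positions in an edge, and each choice of $(i,d)$ determines $a = v - id^r$), while $\Delta_\ell(\HH) = O(1)$ for $\ell \ge 2$, because any two distinct vertices in an edge determine $d^r$ (hence $d$) via their difference, up to a constant number of choices for the two positions they occupy.

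Setting $p = n^{-1/(kr)}$ and $c = c(k,r)$ sufficiently large, one verifies that $\Delta_\ell(\HH) \le c p^{\ell-1} e(\HH)/v(\HH)$ for every $\ell \in [k+1]$. The binding constraint is at $\ell = k+1$, where $\Delta_{k+1}(\HH) = 1$ is matched against $c\, p^{k} \cdot n^{1/r} = c$; this is precisely what forces the exponent $1 - 1/(kr)$. For $\ell \in \{2, \ldots, k\}$ the condition is even easier because $p^{\ell-1}n^{1/r} \ge p^{k} n^{1/r} = $ const, and $\ell = 1$ reduces to the trivial $O(n^{1/r}) \le c n^{1/r}$.

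The only ingredient beyond Theorem~\ref{thm:Sz}'s template is a Varnavides-type supersaturation statement: for every $\delta > 0$ there exists $\eps > 0$ such that, for all large $n$, every subset of $[n]$ of size at least $\delta n$ contains at least $\eps e(\HH)$ edges of $\HH$. This is derived from the polynomial Szemer\'edi theorem of Bergelson and Leibman~\cite{BL} by the standard averaging trick (sampling random affine windows of a bounded length that depends on $\delta$ and counting configurations inside), exactly as Varnavides' original argument extends Szemer\'edi's theorem. Taking $\delta = \min\{\beta/2, 1/10\}$ and $\F = \{A \subseteq [n] : |A| \ge \delta n\}$, this makes $\HH$ into an $(\F, \eps)$-dense hypergraph, and Theorem~\ref{thm:main} (with the parameters above) delivers a family $\S \subseteq \binom{[n]}{\le C'pn}$ and maps $f \colon \S \to \ol{\F}$, $g \colon \I(\HH) \to \S$ with $g(I) \subseteq I \subseteq f(g(I)) \cup g(I)$ and $|f(g(I))| < \delta n$. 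Summing $\binom{|f(S)|}{m - |S|}$ over $S \in \S$ and invoking the binomial estimates~\eqref{eq:binomial-1}--\eqref{eq:binomial-4} with $C = C'/\delta$ yields $|\I(\HH,m)| \le \binom{\beta n}{m}$ once $m \ge Cn^{1-1/(kr)}$, by the identical telescoping computation used for Theorem~\ref{thm:Sz}.

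The main (and essentially only) non-mechanical step is the supersaturation lemma; however, we need only qualitative positivity of $\eps$, not a quantitative bound, so the Varnavides averaging argument on Bergelson--Leibman is painless. Everything else is a routine verification of the degree bounds and the same binomial-coefficient calculation that closed out the proof of Theorem~\ref{thm:Sz}.
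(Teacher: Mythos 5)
Your proposal is correct and follows exactly the route the paper intends: the paper only says ``the same argument gives'' after citing Bergelson--Leibman, and you have filled in that argument faithfully — the $(k+1)$-uniform hypergraph of configurations $\{a,a+d^r,\ldots,a+kd^r\}$, the degree bounds $\Delta_1 = O(n^{1/r})$ and $\Delta_\ell = O(1)$ for $\ell\ge 2$ with $p=n^{-1/(kr)}$ (correctly identifying $\ell=k+1$ as the binding constraint), the polynomial Varnavides supersaturation, and the same closing binomial computation as in Theorem~\ref{thm:Sz}. The one place that deserves slightly more care than ``painless'' suggests is the supersaturation step: to extract $\Omega(e(\HH))=\Omega(n^{1+1/r})$ configurations from a $\delta$-dense set (rather than merely the $\Omega(n)$ that translated intervals would give), the averaging must be over affine windows $\{x+jz^r : 1\le j\le N_0\}$ whose scaling factor is itself a perfect $r$-th power, so that a Bergelson--Leibman configuration $\{a',a'+e^r,\ldots,a'+ke^r\}$ in the pullback to $[N_0]$ maps back to one with common difference $e^r z^r=(ez)^r$ of the correct form, and one must also check that any fixed configuration lies in at most $O_{N_0,k,r}(1)$ such windows to control the overcount; both points are routine but they are where the argument is genuinely more than a cosmetic substitution.
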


\section{Extremal results for sparse sets}

\label{sec:extremal-results}

In this section, we shall deduce from Theorem~\ref{thm:main} two versions of the general transference theorem of Schacht~\cite[Theorem~3.3]{Sch}. We remind the reader that a statement very similar to Schacht's theorem was proved independently by Conlon and Gowers~\cite{CG}. For the benefit of the readers who are familiar with~\cite{Sch}, we shall state it using the terminology used there.

\begin{defn}
  \label{defn:alpha-dense}
  Let $\HH = (\HH_n)_{n \in \N}$  be a sequence of $k$-uniform hypergraphs and let $\alpha \in [0,1)$. We say that $\HH$ is \emph{$\alpha$-dense} if the following is true: For every positive $\delta$, there exist positive $\eps$ and $n_0$ such that for every $n$ with $n \ge n_0$ and every $U \subseteq V(\HH_n)$ with $|U| \ge (\alpha + \delta)v(\HH_n)$, we have
  \[
  e(\HH_n[U]) \ge \eps e(\HH_n).
  \]
\end{defn}

Let us remark here that Definition~\ref{defn:Feps-dense} is a generalization of Definition~\ref{defn:alpha-dense}. Indeed, if $\F_\delta$ denotes the collection of all subsets of $V(\HH_n)$ with at least $(\alpha + \delta)v(\HH_n)$ elements, then a sequence $\HH$ of hypergraphs is $\alpha$-dense if and only if for every positive $\delta$, there exists a positive $\eps$ such that for all sufficiently large $n$, the hypergraph $\HH_n$ is $(\F_\delta, \eps)$-dense.

We start with the `random' version of our extremal result, which was originally proved by Schacht~\cite[Theorem~3.3]{Sch}.

\begin{thm}
  \label{thm:Sch-weak}
  Let $\HH$ be a sequence of $k$-uniform hypergraphs, let $\alpha \in [0,1)$, and let $c$ be a  positive constant. Suppose that $\p \in [0,1]^\N$ is a sequence of probabilities such that for all sufficiently large $n \in \N$, and for every $\ell \in [k]$, we have
  \begin{equation}
    \label{eq:Delta-ell}
    \Delta_\ell(\HH_n) \le c \cdot p_n^{\ell-1} \frac{e(\HH_n)}{v(\HH_n)}.
  \end{equation}
  If $\HH$ is $\alpha$-dense, then the following holds. For every positive $\delta$, there exists a constant $C$ such that if $q_n \ge Cp_n$ and $q_nv(\HH_n) \to \infty$ as $n \to \infty$, then a.a.s.
  \[
  \alpha\big( \HH_n[V(\HH_n)_{q_n}] \big) \le (\alpha + \delta) q_n v(\HH_n).
  \]
\end{thm}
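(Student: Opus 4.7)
The plan is to apply Theorem~\ref{thm:main} to each $\HH_n$ with the upset
\[
\F_n = \bigl\{ A \subseteq V(\HH_n) : |A| \ge (\alpha + \delta/2)\, v(\HH_n) \bigr\}.
\]
The $\alpha$-density of $\HH$ supplies a positive $\eps_0 = \eps_0(\alpha,\delta)$ such that $\HH_n$ is $(\F_n, \eps_0)$-dense for all large $n$; setting $\eps = \min\{\eps_0, \alpha + \delta/2\}$, Theorem~\ref{thm:main} produces a constant $C_1 = C_{\ref{thm:main}}(k, c, \eps)$ together with a family $\S_n \subseteq \binom{V(\HH_n)}{\le C_1 p_n v(\HH_n)}$ and maps $f_n \colon \S_n \to \ol{\F_n}$, $g_n \colon \I(\HH_n) \to \S_n$ such that every $I \in \I(\HH_n)$ satisfies $g_n(I) \subseteq I$, $I \setminus g_n(I) \subseteq f_n(g_n(I))$, and $|f_n(g_n(I))| < (\alpha + \delta/2)\, v(\HH_n)$.

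Write $N = v(\HH_n)$ and $V_n = V(\HH_n)_{q_n}$. If $\alpha(\HH_n[V_n]) > (\alpha + \delta) q_n N$, then there exist $S \in \S_n$ and $J \in \I(\HH_n)$ with $J \subseteq V_n$, $g_n(J) = S$, and $|J| > (\alpha + \delta) q_n N$. Setting $A_S := f_n(S) \setminus S$, this forces both $S \subseteq V_n$ and $|V_n \cap A_S| > (\alpha + \delta) q_n N - |S|$, and these two events involve disjoint coordinates of the product Bernoulli measure, hence are independent. I would pick $C \ge 8 C_1/\delta$, which ensures $|S| \le C_1 p_n N \le (\delta/8) q_n N$; the second event then demands a deviation of at least $(3\delta/8) q_n N$ above the mean $q_n |A_S| \le (\alpha + \delta/2) q_n N$, which by Chernoff's inequality has probability at most $\exp(-c_0 q_n N)$ for a constant $c_0 = c_0(\alpha,\delta) > 0$ (e.g.\ $c_0 = \delta^2/200$).

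Summing over $S \in \S_n$, the probability of failure is at most
\[
\exp(-c_0 q_n N) \sum_{j=0}^{C_1 p_n N} \binom{N}{j} q_n^j \le (C_1 p_n N + 1) \Bigl( \frac{e q_n}{C_1 p_n} \Bigr)^{C_1 p_n N} \exp(-c_0 q_n N),
\]
since $\binom{N}{j} q_n^j$ is increasing in $j$ throughout $[0, C_1 p_n N]$ once $C \ge 2C_1$, and $\binom{N}{C_1 p_n N} \le (e/(C_1 p_n))^{C_1 p_n N}$. Writing $s := q_n/(C_1 p_n) \ge C/C_1$, the logarithm of the main factor equals $q_n N \cdot \log(es)/s$, and since $\log(es)/s \to 0$ as $s \to \infty$, choosing $C$ large enough as a function of $c_0$ (hence of $\alpha, \delta$) makes this at most $(c_0/2)\, q_n N$. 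The polynomial prefactor is absorbed by the hypothesis $q_n N \to \infty$, so the total probability tends to zero.

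The only real obstacle is the quantitative trade-off in the union bound: the Chernoff saving $\exp(-c_0 q_n N)$ must swallow both $|\S_n|$ and the factor $q_n^{|S|}$ counting the probability that $S \subseteq V_n$. This reduces exactly to the inequality $\log(es)/s \le c_0/2$, which is achieved by enlarging $C$ past a threshold depending only on $\alpha$ and $\delta$. Once this is arranged, Theorem~\ref{thm:main} is used as a black box and everything else is routine.
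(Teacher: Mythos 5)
Your proof is correct and follows essentially the same route as the paper's: apply Theorem~\ref{thm:main} with the natural upset $\F_n$ of sets of size at least $(\alpha+\delta')v(\HH_n)$, use the resulting fingerprint $S = g_n(I)$ and container $f_n(S)$ to decompose the failure event, apply Chernoff to each container, and absorb the union bound over $S \in \S_n$ by making $C$ large. The one small refinement you make — working with $A_S = f_n(S)\setminus S$ so the events $\{S\subseteq V_n\}$ and $\{|V_n\cap A_S| \ge \cdot\}$ are literally independent (disjoint coordinates) — is slightly cleaner than the paper's inequality~\eqref{eq:Pr-IS}, which is stated for $f(S)$ rather than $f(S)\setminus S$; the paper's inequality is still valid for the same reason (the independent set $I\setminus S$ actually lands in $f(S)\setminus S$), but you make the independence explicit. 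The remaining differences (how the constant $C$ is tuned, the specific split of $\delta$ into $\delta/2$ and $\delta/8$ versus the paper's $\delta'=\delta/3$ and $C = C'/\delta^3$, and the way the sum $\sum_S \Pr(S\subseteq V_n)$ is estimated) are purely cosmetic bookkeeping.
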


We note that the probability bounds implicit in the `asymptotically almost surely' statement that we obtain are, as in~\cite{Sch}, optimal, that is, they decay exponentially in $p_n v(\HH_n)$. 

\begin{remark}
  We remark that the only difference between Theorem~\ref{thm:Sch-weak} and \cite[Theorem~3.3]{Sch} are the assumptions on the hypergraph sequence $\HH$. It turns out that this difference is only superficial, since condition~\eqref{eq:Delta-ell} is essentially equivalent to the condition that $\HH$ is $(K,\p)$-bounded (see~\cite{Sch}). One easily checks that if $\HH_n$ satisfies~\eqref{eq:Delta-ell} for sufficiently large $n$, then $\HH$ is $(K,\p)$-bounded for some constant $K$ that depends only on $c$ and $k$. Conversely, if $\HH$ is $(K,\p)$-bounded, then for all sufficiently large $n$, there is an $\HH_n' \subseteq \HH_n$ with at least $(1-\eps)e(\HH_n)$ edges that satisfies~\eqref{eq:Delta-ell} for some constant $c$ that depends only on $\eps$, $k$, and $K$. One obtains such $\HH_n'$ by repeatedly deleting from $\HH_n$ edges that contain an $\ell$-set $T$ with $\deg_{\HH}(T) > c \cdot p_n^{\ell-1} e(\HH_n)/v(\HH_n)$. Finally, note that, trivially, if $\HH_n$ is $(\F,2\eps)$-dense for some family $\F \subseteq \cP(V(\HH_n))$, then every $\HH_n'$ with $e(\HH_n') \ge (1-\eps)e(\HH_n)$ is $(\F,\eps)$-dense.
\end{remark}

Our methods also yield the following `counting' analogue of Theorem~\ref{thm:Sch-weak}. This generalizes Theorem~\ref{thm:Sz}, and does not follow from the methods of~\cite{CG} or~\cite{Sch}. In the case $\alpha = 0$, it can be thought of as a strengthening of Theorem~\ref{thm:Sch-weak}, see Corollary~\ref{cor:Sz}.

\begin{thm}
  \label{thm:Sch-counting}
  Let $\HH$ be a sequence of $k$-uniform hypergraphs, let $\alpha \in [0,1)$, and let $c$ be a positive constant. Suppose that $\p \in [0,1]^\N$ is a sequence of probabilities such that for all sufficiently large $n \in \N$, and for every $\ell \in [k]$, we have 
    \[
  \Delta_\ell(\HH_n) \le c \cdot p_n^{\ell-1} \frac{e(\HH_n)}{v(\HH_n)}.
  \]
  If $\HH$ is $\alpha$-dense, then the following holds. For every positive $\delta$, there exists a constant $C$ such that for all sufficiently large $n$, if $m \ge C p_n v(\HH_n)$, then 
  \[
  |\I( \HH_n, m )| \le \binom{(\alpha + \delta) v(\HH_n)}{m}.
  \]
\end{thm}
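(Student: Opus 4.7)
The plan is to apply Theorem~\ref{thm:main} to each $\HH_n$ with a family $\F$ derived from the $\alpha$-density hypothesis, and then estimate the resulting binomial sum in the spirit of the proof of Theorem~\ref{thm:Sz}. First I would choose $\eta > 0$ and $\delta'' \in (0, \delta)$ small enough that $(1+\eta)(\alpha + \delta'') < \alpha + \delta$ and $(2e/(\delta'')^2)^{\delta''} \le 1 + \eta$; both conditions can be met since $\alpha/(\alpha+\delta) < 1$ and $(2e/(\delta'')^2)^{\delta''} \to 1$ as $\delta'' \to 0^+$. The $\alpha$-density hypothesis, applied with parameter $\delta''/2$, then provides an $\eps > 0$ and $n_0$ such that for $n \ge n_0$ the hypergraph $\HH_n$ is $(\F, \eps)$-dense for $\F = \{U \subseteq V(\HH_n) : |U| \ge (\alpha+\delta'')v(\HH_n)\}$, and in addition no subset of size at least $(\alpha+\delta''/2)v(\HH_n)$ is independent (so we may assume $m \le (\alpha+\delta''/2)v(\HH_n)$, else $|\I(\HH_n,m)|=0$). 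Applying Theorem~\ref{thm:main}, with $\eps$ replaced by $\min(\eps, \alpha+\delta'')$ to meet its size hypothesis on $\F$, yields a constant $C'$ and functions $g : \I(\HH_n) \to \S$ and $f : \S \to \ol{\F}$ with $|g(I)| \le C' p_n v(\HH_n)$, $|f(g(I))| < (\alpha+\delta'')v(\HH_n)$, $g(I) \subseteq I$ and $I \setminus g(I) \subseteq f(g(I))$. Setting $C = C'/\delta''$ then guarantees that $m \ge C p_n v(\HH_n)$ forces $C' p_n v(\HH_n) \le \delta'' m$.

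Writing $v = v(\HH_n)$ and $p = p_n$ for brevity, partitioning $\I(\HH_n, m)$ by the value of $g$ and using $g(I) \subseteq I \subseteq g(I) \cup f(g(I))$ yields the container estimate
\[
|\I(\HH_n, m)| \;\le\; \sum_{s = 0}^{C' p v} \binom{v}{s} \binom{(\alpha+\delta'')v}{m-s}.
\]
By \eqref{eq:binomial-1}, \eqref{eq:binomial-2}, and the bound $(\alpha+\delta'')v - m \ge (\delta''/2)v$, each summand is at most $(2em/(\delta'' s))^s \binom{(\alpha+\delta'')v}{m}$. Since the function $s \mapsto (y/s)^s$ is increasing on $(0, y/e)$ for $y = 2em/\delta''$, and we have $C'pv \le \delta'' m \le 2m/\delta''$, each summand is further dominated by its value at $s = \delta'' m$, namely $(2e/(\delta'')^2)^{\delta'' m} \binom{(\alpha+\delta'')v}{m} \le (1+\eta)^m \binom{(\alpha+\delta'')v}{m}$. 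Applying \eqref{eq:binomial-3} with $\rho = (\alpha+\delta'')/(\alpha+\delta)$ and summing the at most $C'pv + 1$ terms gives
\[
|\I(\HH_n, m)| \;\le\; (C' p v + 1) \bigl((1+\eta)\rho\bigr)^m \binom{(\alpha+\delta) v}{m},
\]
and since $(1+\eta)\rho < 1$ is a constant strictly less than $1$ while $m \ge C p v$, the prefactor is at most $1$ for all sufficiently large $n$, completing the proof.

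The main obstacle is the parameter selection when $\alpha > 0$: unlike the $\alpha = 0$ case of Theorem~\ref{thm:Sz}, where $\rho = \delta''/\delta$ can be made arbitrarily small so that a $2^m$-type error from the binomial sum is trivially absorbed, for $\alpha > 0$ the ratio $\rho$ is bounded below by $\alpha/(\alpha+\delta) > 0$, leaving only a constant per-element slack $1/\rho$. The key observation that rescues the argument is that $(2e/(\delta'')^2)^{\delta''} \to 1$ as $\delta'' \to 0$, so by taking $\delta''$ sufficiently small (at the cost of enlarging $C$ through $C' = C_{\ref{thm:main}}(k, c, \min(\eps, \alpha+\delta''))$) one can force the per-element error factor arbitrarily close to $1$ and thereby fit it inside the slack $(1/\rho)^m$ afforded by the gap between $\alpha+\delta''$ and $\alpha+\delta$.
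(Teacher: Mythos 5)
Your argument follows the same route as the paper's: take $\F$ to be the sets of size at least $(\alpha+\delta')v(\HH_n)$ for a slightly smaller $\delta'$, invoke $\alpha$-density to get $(\F,\eps)$-density and the restriction $m < (\alpha+\delta')v(\HH_n)$, apply Theorem~\ref{thm:main}, and absorb the resulting container sum into $\binom{(\alpha+\delta)v(\HH_n)}{m}$ via~\eqref{eq:binomial-1}--\eqref{eq:binomial-3} and the monotonicity of $s \mapsto (y/s)^s$. The parameter bookkeeping differs only superficially from the paper's (which takes $\delta' = \delta/2$, $C = C'/\delta^2$, assumes WLOG $\delta$ small, and bounds $\binom{v(\HH_n)}{s}|\I_S| \le e^{-\delta^2 m}\binom{(\alpha+\delta)v(\HH_n)}{m}$ term by term before summing).

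There is one slip at the very end. The theorem imposes no lower bound on $p_n v(\HH_n)$, so $m \ge Cp_n v(\HH_n)$ together with ``$n$ sufficiently large'' does not force $m \to \infty$; the inequality must hold for every admissible $m$, including small ones, so ``the prefactor is at most $1$ for all sufficiently large $n$'' is not justified as written. The fix is purely arithmetic and mirrors what the paper does implicitly with $(\delta^2 m+1)e^{-\delta^2 m} \le 1$: since $C'pv \le \delta'' m$, your prefactor is at most $(\delta'' m + 1)\bigl((1+\eta)\rho\bigr)^m$, so strengthen your first parameter condition to $(1+\eta)(\alpha+\delta'') \le (\alpha+\delta)e^{-\delta''}$ (still attainable for small $\delta''$). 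Then $(1+\eta)\rho \le e^{-\delta''}$ and $(\delta'' m+1)\bigl((1+\eta)\rho\bigr)^m \le (\delta'' m+1)e^{-\delta'' m} \le 1$ for every $m$ by $1+x \le e^x$. With that adjustment your proof is correct.
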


\begin{proof}[Proof of Theorem~\ref{thm:Sch-weak}]
  Let $\alpha \in [0,1)$, let $k \in \N$, let $\p \in [0,1]^\N$, let $c \in (0,\infty)$, and let $\HH$ be a sequence of $k$-uniform hypergraphs as in the statement of Theorem~\ref{thm:Sch-weak}. Furthermore, suppose that $\HH$ is $\alpha$-dense and fix some positive $\delta$; without loss of generality, we may assume that $\delta$ is sufficiently small. Let $n \in \N$ be sufficiently large, let $\delta' = \delta/3$, and let $\F$ denote the family of all subsets of $V(\HH_n)$ with at least $(\alpha + \delta')v(\HH_n)$ elements. Since $\HH_n$ is $\alpha$-dense, it follows that $\HH_n$ is $(\F, \eps)$-dense for some small positive $\eps$ that does not depend on $n$. Let $C' = C_{\ref{thm:main}}(k,\eps,c)$. By Theorem~\ref{thm:main}, there exist a family $\S \subseteq \binom{V(\HH_n)}{\le C'p_nv(\HH_n)}$ and functions $f \colon \S \to \ol{\F}$ and $g \colon \I(\HH_n) \to \S$ such that
  \[
  g(I) \subseteq I \qquad \text{and} \qquad I \setminus g(I) \subseteq f(g(I))
  \]
  for every $I \in \I(\HH_n)$. Let $C = C'/\delta^3$ and assume that $q_n \ge Cp_n$. Let $m = (\alpha+\delta) q_n v(\HH_n)$ and, for the sake of brevity, let us write $V = V(\HH_n)$ and $q = q_n$. Observe that
  \begin{align}
    \label{eq:Pr-alpha-H-large}
    \Pr\Big(\alpha(\HH_n[V_q]) \ge m\Big) & = \Pr\Big(\text{$I \subseteq V_q$ for some $I \in \I(\HH_n,m)$}\Big) \\
    \nonumber
    & \le \sum_{S \in \S} \Pr\Big(\text{$I \subseteq V_{q}$ for some $I \in \I(\HH_n,m)$ such that $g(I) = S$}\Big).
  \end{align}
  Fix an $S \in \S$ and let $\I'_S = \{I \in \I(\HH_n,m) \colon g(I) = S\}$. We estimate the summand in the right-hand side of~\eqref{eq:Pr-alpha-H-large} as follows:
  \begin{equation}
    \label{eq:Pr-IS}
    \Pr\Big(\text{$I \subseteq V_q$ for some $I \in \I'_S$}\Big) \le \Pr\big(S \subseteq V_q\big) \cdot \Pr\left(\big|V_q \cap f(S)\big| \ge m - |S| \right).
  \end{equation}
  To see the above inequality, simply note that for every $I \in \I'_S$, we have $I \setminus S \subseteq f(S)$.

  Now, since $m = (\alpha+3\delta') q_n v(\HH_n)$ and $S \in \S$, then
  \[
  |S| \le C'p_nv(\HH_n) \le \delta^3 q v(\HH_n) \le \delta' q v(\HH_n)
  \]
  and hence $m - |S| \ge (\alpha+2\delta')qv(\HH_n)$. On the other hand, since $|f(S)| \le (\alpha + \delta')v(\HH_n)$ by the definition of $\F$, then
  \[
  \Ex\big[|V_q \cap f(S)|\big] \le (\alpha + \delta') q v(\HH_n).
  \]
  Hence, by Chernoff's inequality, we have
  \begin{equation}
    \label{eq:Pr-Vq-fS}
    \Pr\left(\big|V_q \cap f(S)\big| \ge m - |S| \right) \le \exp\left(-\frac{(\delta')^2qv(\HH_n)}{4}\right) = \exp\left(-\frac{\delta^2qv(\HH_n)}{36}\right).
  \end{equation}
  Finally, note that since $|S| \le \delta^3 q v(\HH_n)$ for every $S \in \S$, and using~\eqref{eq:binomial-1}, 
  \begin{equation}
    \label{eq:Pr-Ssum}
    \sum_{S \in \S} \Pr\big( S \subseteq V_q \big) \le \sum_{s = 0}^{\delta^3 q v(\HH_n)} \binom{v(\HH_n)}{s} q^s \le v(\HH_n) \cdot \left( \frac{e}{\delta^3} \right)^{\delta^3 q v(\HH_n)}.
  \end{equation}
  Putting~\eqref{eq:Pr-alpha-H-large}, \eqref{eq:Pr-IS}, \eqref{eq:Pr-Vq-fS}, and~\eqref{eq:Pr-Ssum} together, we obtain
  \[
  \Pr\Big(\alpha(\HH_n[V_q]) \ge m\Big) \le \sum_{S \in \S} \Pr\big(S \subseteq V_q\big) \exp\left( -\frac{\delta^2qv(\HH_n)}{36} \right) \le \exp\big( - \delta^3 q v(\HH_n) \big),
  \]
  as required.
\end{proof}

\begin{proof}[Proof of Theorem~\ref{thm:Sch-counting}]
  Let $\alpha \in [0,1)$, let $k \in \N$, let $\p \in [0,1]^\N$, let $c \in (0,\infty)$, and let $\HH$ be a sequence of $k$-uniform hypergraphs as in the statement of Theorem~\ref{thm:Sch-weak}. Furthermore, suppose that $\HH$ is $\alpha$-dense and fix some positive $\delta$. Let $n$ be sufficiently large, let $\delta' = \delta/2$, and let $\F$ denote the family of all subsets of $V(\HH_n)$ with at least $(\alpha + \delta')v(\HH_n)$ elements. Since $\HH_n$ is $\alpha$-dense, it follows that $\HH_n$ is $(\F, \eps)$-dense for some small positive $\eps$ that does not depend on $n$. Let $C' = C_{\ref{thm:main}}(k,\eps,c)$. By Theorem~\ref{thm:main}, there exist a family $\S \subseteq \binom{V(\HH_n)}{\le C'p_nv(\HH_n)}$ and functions $f \colon \S \to \ol{\F}$ and $g \colon \I(\HH_n) \to \S$ such that
  \[
  g(I) \subseteq I \qquad \text{and} \qquad I \setminus g(I) \subseteq f(g(I))
  \]
  for every $I \in \I(\HH_n)$. Let $C = C'/\delta^2$ and assume that $m \ge C p_n v(\HH_n)$. Fix an $S \in \S$, let $\I_S = \{I \in \I(\HH_n, m) \colon g(I) = S\}$, and note for future reference that
  \begin{equation}
    \label{eq:S-Sch-count}
    |S| \le C'p_nv(\HH_n) \le \delta^2 m.
  \end{equation}
  Since $f(S) \in \ol{\F}$, we have $|f(S)| < (\alpha + \delta')v(\HH_n)$. Therefore,
  \[
  |\I_S| \le \binom{|f(S)|}{m - |S|} \le \binom{(\alpha + \delta')v(\HH_n)}{m - |S|}.
  \]
  To see the above inequality, simply note that for every $I \in \I_S$, we have $I \setminus S \subseteq f(S)$. 
  
  It follows, using~\eqref{eq:binomial-3} and~\eqref{eq:binomial-2}, that
 \begin{equation}  \label{eq:IS-Sch-count}
 |\I_S| \le  \binom{(\alpha + \delta')v(\HH_n)}{m - |S|} \le \bigg( \frac{\alpha + \delta'}{\alpha + \delta} \bigg)^{m - |S|} \left( \frac{m}{(\alpha + \delta) v(\HH_n) - m} \right)^{|S|} \binom{(\alpha + \delta)v(\HH_n)}{m}.
 \end{equation}
 Now, if $m \ge (\alpha+\delta')v(\HH_n)$, then every $m$-subset of $V(\HH_n)$ belongs to $\F$ and hence there is no independent set of size $m$. We may therefore assume that $m < (\alpha + \delta')v(\HH_n) = (\alpha + \delta/2)v(\HH_n)$. Setting $s = |S|$, we obtain
  \[
  \binom{v(\HH_n)}{s} \cdot |\I_S| \le \bigg( \frac{\alpha + \delta'}{\alpha + \delta} \bigg)^{m/2} \left( \frac{e v(\HH_n)}{s} \cdot \frac{2m}{\delta v(\HH_n)} \right)^{s} \binom{(\alpha + \delta)v(\HH_n)}{m} \le e^{-\delta^2 m} \binom{(\alpha+\delta) v(\HH_n)}{m},
  \]
since $s \le \delta^2 m$, by~\eqref{eq:S-Sch-count}, and provided that $\delta$ is sufficiently small. It follows that
  \[
  |\I(\HH_n,m)| = \sum_{S \in \S} |\I_S| \le \sum_{s = 0}^{\delta^2 m} \binom{v(\HH_n)}{s} \max\big\{ |\I_S| \colon |S| = s \big\} \le \binom{(\alpha+\delta) v(\HH_n)}{m},
  \]
  as claimed.
\end{proof}

\section{Stability results for sparse sets}

\label{sec:stability-results}

In this section, we shall deduce from Theorem~\ref{thm:main} two versions of the general transference theorem for stability results proved by Conlon and Gowers~\cite{CG}. Similarly as in Section~\ref{sec:extremal-results}, we shall state our results using the terminology used by Schacht~\cite{Sch}. We remark here that in parallel to this work, Schacht's method was adapted to yield sparse random analogues of stability statements by Samotij~\cite{Sam}. The main result of this section is most easily compared with~\cite[Theorem~3.4]{Sam}. We begin by recalling the following definition from~\cite{ABMS1}.

\begin{defn}
  \label{defn:aB-stable}
  Let $\HH$ be a sequence of $k$-uniform hypergraphs, let $\alpha$ be a positive real, and let $\B$ be a sequence of sets with $\B_n \subseteq \cP(V(H_n))$. We say that $\HH$ is \emph{$(\alpha, \B)$-stable} if for every positive $\delta$, there exist positive $\eps$ and $n_0$ such that the following holds. For every $n$ with $n \ge n_0$ and every $U \subseteq V(\HH_n)$ with $|U| \ge (\alpha - \eps)v(\HH_n)$, we have either $e(\HH_n[U]) \ge \eps e(\HH_n)$ or $|U \setminus B| \le \delta v(\HH_n)$ for some $B \in \B_n$.
\end{defn}

Roughly speaking, a sequence $\HH$ of hypergraphs is $(\alpha, \B)$-stable if for every $A \subseteq V(\HH_n)$ that is almost as large as $\alpha v(\HH_n)$, the set $A$ is either very `close' to some extremal set $B \in \B_n$ or it contains `many' (a positive fraction of all) edges of $\HH_n$. Note that in many natural settings, such a property does hold, for example, as a~consequence of the~Erd{\H o}s-Simonovits stability theorem~\cite{ES1, ES2} and the~removal lemma for graphs.

We again start with the `random' version of our stability result, which was originally proved in~\cite{Sam}.

\begin{thm}
  \label{thm:stability-random}
  Let $\HH$ be a sequence of $k$-uniform hypergraphs, let $\alpha \in (0,1)$, and let $c$ be a positive constant. Let $\p$ be a sequence of probabilities such that, for every $\ell \in [k]$,
  \[
  \Delta_\ell(\HH_n) \le c \cdot p_n^{\ell-1} \frac{e(\HH_n)}{v(\HH_n)}
  \]
  and let $\B$ be a sequence of sets with $\B_n \subseteq \cP(V(\HH_n))$.
  
  If $\HH$ is $(\alpha, \B)$-stable, then the following holds. For every positive $\delta$, there exist $\eps$ and $C$ such that if $q_n \ge Cp_n$ and $q_n v(\HH_n) \to \infty$ as $n \to \infty$, then a.a.s.~every independent set $I \subseteq V(\HH_n)_{q_n}$ with $|I| \ge (\alpha - \eps) q_n v(\HH_n)$ satisfies $|I \setminus B| < \delta q_n v(\HH_n)$ for some $B \in \B_n$.
\end{thm}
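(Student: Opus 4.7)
The plan is to follow the scheme of the proof of Theorem~\ref{thm:Sch-weak}, but with an upset $\F$ that simultaneously encodes the size requirement and the ``far from every $B\in\B_n$'' requirement supplied by $(\alpha,\B)$-stability. Given $\delta>0$, set $\delta_1=\delta/4$ and let $\eps_1>0$ be the constant provided by Definition~\ref{defn:aB-stable} when applied with parameter $\delta_1$; without loss of generality $\eps_1\le\alpha/2$. Define
\[
\F \;=\; \bigl\{\, A\subseteq V(\HH_n) : |A|\ge(\alpha-\eps_1)v(\HH_n)\text{ and }|A\setminus B|>\delta_1 v(\HH_n)\text{ for every }B\in\B_n \,\bigr\}.
\]
The family $\F$ is manifestly increasing, and by $(\alpha,\B)$-stability, $\HH_n$ is $(\F,\eps_1)$-dense (every $A\in\F$ is ``far'' from each $B\in\B_n$, so the second alternative of Definition~\ref{defn:aB-stable} is ruled out, and only the first, density alternative can hold). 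Apply Theorem~\ref{thm:main} to obtain a constant $C'=C_{\ref{thm:main}}(k,\eps_1,c)$, a family $\S\subseteq\binom{V(\HH_n)}{\le C'p_n v(\HH_n)}$, and functions $g\colon\I(\HH_n)\to\S$ and $f\colon\S\to\ol{\F}$ with the usual containment. In the theorem to be proved, set $\eps=\eps_1/2$ and choose $C$ sufficiently large compared with $C'/\delta$, so that $|S|\le(\delta/4)q_n v(\HH_n)$ for every $S\in\S$ whenever $q_n\ge Cp_n$.

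The key observation is that $f(S)\in\ol{\F}$ splits, for each $S\in\S$, into two (possibly overlapping) cases. In Case~(a), $|f(S)|<(\alpha-\eps_1)v(\HH_n)$. For any independent $I\subseteq V(\HH_n)_{q_n}$ with $g(I)=S$, the containment $I\subseteq S\cup f(S)$ gives $|I|\le|S|+|V(\HH_n)_{q_n}\cap f(S)|$, and Chernoff's inequality forces $|V(\HH_n)_{q_n}\cap f(S)|\le(\alpha-3\eps_1/4)q_n v(\HH_n)$ except on an event of probability $\exp(-\Omega(\eps_1^2 q_n v(\HH_n)))$; on the complement, no $I$ with $g(I)=S$ can reach the threshold $(\alpha-\eps)q_n v(\HH_n)$ demanded by the theorem, so the implication holds vacuously. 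In Case~(b), there is some $B(S)\in\B_n$ with $|f(S)\setminus B(S)|\le\delta_1 v(\HH_n)$, and then for any $I$ with $g(I)=S$,
\[
|I\setminus B(S)| \;\le\; |S|+|V(\HH_n)_{q_n}\cap(f(S)\setminus B(S))|.
\]
A second Chernoff bound controls the right-hand side by $(\delta/4+\delta/2)q_n v(\HH_n)<\delta q_n v(\HH_n)$ except with probability $\exp(-\Omega(\delta q_n v(\HH_n)))$, which is the desired conclusion of the theorem with $B=B(S)$.

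To finish, one conditions on $S\subseteq V(\HH_n)_{q_n}$ and takes a union bound over $S\in\S$, exactly as in the proof of Theorem~\ref{thm:Sch-weak}. The sum $\sum_{S\in\S}\Pr(S\subseteq V(\HH_n)_{q_n})$ is bounded by $v(\HH_n)\cdot(e/\gamma)^{\gamma q_n v(\HH_n)}$ for some $\gamma$ that tends to $0$ as $C/C'\to\infty$, while each summand is multiplied by the Chernoff failure probability $\exp(-\Omega(q_n v(\HH_n)))$; choosing $C$ large ensures that the Chernoff factor dominates, so the overall failure probability is $\exp(-\Omega(q_n v(\HH_n)))\to 0$. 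The genuinely new ingredient is the definition of $\F$, chosen so that $\ol{\F}$ cleanly decomposes into ``too small'' (Case~(a)) and ``close to some $B\in\B_n$'' (Case~(b)); once this decomposition is in place, the argument is structurally identical to Theorem~\ref{thm:Sch-weak}, and the main obstacle is the bookkeeping among the three small parameters $\delta_1$, $\eps_1$, $\eps$ (plus the Theorem~\ref{thm:main} constant $C'$) so that both Chernoff bounds beat the union bound over $\S$.
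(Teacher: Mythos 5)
Your proposal follows the same strategy as the paper's proof: define $\F$ so that $\ol{\F}$ decomposes into the ``too small'' and ``close to some $B\in\B_n$'' cases, apply Theorem~\ref{thm:main}, condition on $g(I)=S$, apply Chernoff's inequality in each case, and take a union bound over $S\in\S$. The structure is correct. However, the parameter bookkeeping has a real (if small) gap that your own final sentence acknowledges as ``the main obstacle'' but does not resolve. You guarantee only $|S|\le(\delta/4)q_n v(\HH_n)$. In Case~(a) you need $|S|+(\alpha-3\eps_1/4)q_n v(\HH_n)<(\alpha-\eps_1/2)q_n v(\HH_n)$, i.e.\ $|S|<(\eps_1/4)q_n v(\HH_n)$; but $\eps_1$, being the stability constant associated to $\delta_1=\delta/4$, will in general be much smaller than $\delta$, so the bound $|S|\le(\delta/4)q_n v(\HH_n)$ is not strong enough to conclude. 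The same mismatch affects the final union bound: the sum $\sum_{S\in\S}\Pr(S\subseteq V_{q_n})$ is of order $(e/\gamma)^{\gamma q_n v(\HH_n)}$ with $\gamma=\delta/4$, while your Case~(a) Chernoff factor is only $\exp(-\Omega(\eps_1^2 q_n v(\HH_n)))$, so the product need not vanish. The fix is straightforward and is exactly what the paper does: one must choose $C$ large in terms of the stability constant $\eps_1$ (not just $\delta$), e.g.\ $C = C'/\eps_1^3$, so that $|S|\le \eps_1^3 q_n v(\HH_n)$, which is small enough for both the Case~(a) threshold computation and the union bound. With that adjustment your proof goes through and is essentially the paper's proof.
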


The following theorem, a `counting' analogue of Theorem~\ref{thm:stability-random}, is our main stability result. A simple version of it, applicable to $3$-uniform hypergraphs with $\Delta_2(\HH_n) = O(1)$, was proved in~\cite{ABMS1} and used in~\cite{ABMS1,ABMS2} to count sum-free subsets in Abelian groups and in the set $[n]$.

\begin{thm}
  \label{thm:stability-counting}
  Let $\HH$ be a sequence of $k$-uniform hypergraphs, let $\alpha \in (0,1)$, and let $c$ be a positive constant. Let $\p$ be a sequence of probabilities such that, for every $\ell \in [k]$,
  \[
  \Delta_\ell(\HH_n) \le c \cdot p_n^{\ell-1} \frac{e(\HH_n)}{v(\HH_n)}
  \]
  and let $\B$ be a sequence of sets with $\B_n \subseteq \cP(V(\HH_n))$.
  
  If $\HH$ is $(\alpha, \B)$-stable, then the following holds. For every positive $\delta$, there exist $\eps$ and $C$ such that if $m \ge Cp_nv(\HH_n)$, then there are at most
  \[
  (1-\eps)^m \binom{\alpha v(\HH_n)}{m}
  \]
  independent sets $I \in \I(\HH_n,m)$ such that $|I \setminus B| \ge \delta m$ for every $B \in \B_n$. 
\end{thm}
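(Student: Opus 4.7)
The plan is to deduce Theorem~\ref{thm:stability-counting} from Theorem~\ref{thm:main} in a manner paralleling the proof of Theorem~\ref{thm:Sch-counting}, but with a refined family $\F$ that encodes both the near-extremal size condition and the closeness-to-$\B$ condition. Given $\delta > 0$, I would set $\delta_1 = \delta\alpha/C_0$ for a sufficiently large absolute constant $C_0$ (to be pinned down in the computation), and let $\eps_1 > 0$ be supplied by $(\alpha, \B)$-stability on input $\delta_1$; we may assume $|B| \le \alpha v(\HH_n)$ for every $B \in \B_n$, since shrinking elements of $\B_n$ only tightens the badness condition. Define
\[
\F = \big\{ A \subseteq V(\HH_n) : |A| \ge (\alpha - \eps_1) v(\HH_n) \text{ and } |A \setminus B| > \delta_1 v(\HH_n) \text{ for every } B \in \B_n \big\}.
\]
Then $\F$ is increasing and, by stability, $\HH_n$ is $(\F, \eps_1)$-dense. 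Theorem~\ref{thm:main} then supplies a constant $C_1$ and the structures $\S$, $f$, $g$, after which I would take $C$ to be a sufficiently large multiple of $C_1$.

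For each bad independent set $I$ of size $m$, the set $A := f(g(I)) \in \ol{\F}$ falls into one of two cases: (a) $|A| < (\alpha - \eps_1) v(\HH_n)$, or (b) there exists $B = B(S) \in \B_n$ with $|A \setminus B| \le \delta_1 v(\HH_n)$. Choosing $\delta_1 < \delta(\alpha - \eps_1)$ rules out bad $I$'s of size $m \ge (\alpha - \eps_1) v(\HH_n)$: stability applied to $I$ itself gives some $B$ with $|I \setminus B| \le \delta_1 v < \delta m$, contradicting badness. So I may assume $m < (\alpha - \eps_1) v(\HH_n)$, and for simplicity I will focus on $m \le \alpha v(\HH_n)/2$; the narrow residual range can be handled by the same estimates with minor modifications.

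Case (a) will be treated essentially as in the proof of Theorem~\ref{thm:Sch-counting}: for each $S$, the number of $I$ with $g(I) = S$ is at most $\binom{|A|}{m - |S|} \le \binom{(\alpha - \eps_1)v}{m - |S|}$. Summing over $s = |S| \le C_1 p_n v$ and using \eqref{eq:binomial-1}--\eqref{eq:binomial-4}, the key saving comes from the factor $\binom{(\alpha - \eps_1)v}{m-s}/\binom{\alpha v}{m-s} \le (1 - \eps_1/\alpha)^{m-s}$; taking $C$ large enough that $s/m \le C_1/C$ is tiny makes the losses from $\binom{v}{s}$ and from $\binom{\alpha v}{m-s}/\binom{\alpha v}{m}$ negligible, producing a bound of the form $(1 - \eps')^m \binom{\alpha v}{m}$ for some $\eps' > 0$.

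The harder step, which I expect to be the main obstacle, is case~(b). For each $S$ with $f(S)$ in case (b), fix one witness $B = B(S)$; a bad $I$ with $g(I) = S$ then satisfies $I \setminus B \subseteq (A \setminus B) \cup (S \setminus B)$, a set of size at most $\delta_1 v + |S|$, while badness forces at least $\delta m$ points of $I$ outside $B$. Writing $k = |I \cap (A \setminus B)|$, this gives
\[
|\{I : g(I) = S,\ I \text{ bad, case (b)}\}| \le \sum_{k \ge \delta m - |S|} \binom{\delta_1 v + |S|}{k}\binom{\alpha v}{m - |S| - k},
\]
and a computation similar to that at the end of the proof of Theorem~\ref{thm:Sz} will show that for $\delta_1 < \alpha\delta/(4e)$, each summand is at most $(1/2)^{\delta m}\binom{\alpha v}{m}$. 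The delicate point is then summing over $S \in \S$ without the factor $|\S| \le 2^{\Theta(p_nv\log(1/p_n))}$ overwhelming the $(1/2)^{\delta m}$ saving. The rescue is the observation that case~(b) can produce any bad $I$ at all only when $\delta m - |S| \le \delta_1 v$; together with $m \ge Cp_nv$ this forces $p_n \le \delta_1/(\delta C - C_1) = O(1/C)$, so whenever case~(b) is non-vacuous, $p_n$ is automatically bounded by an absolute constant. In this bounded-$p_n$ regime $p_n\log(1/p_n) = O((\log C)/C)$, hence $\log|\S| = O(m\log C /C)$, which is dominated by $\delta m \log 2$ for $C$ sufficiently large. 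Combining the bounds from~(a) and~(b) will then yield the desired estimate $(1 - \eps)^m \binom{\alpha v(\HH_n)}{m}$.
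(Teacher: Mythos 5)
Your overall strategy mirrors the paper's: define $\F$ by combining the near-extremal-size condition with the far-from-$\B$ condition, invoke $(\alpha,\B)$-stability to obtain $(\F,\eps)$-density, apply Theorem~\ref{thm:main}, and split according to whether $f(S)$ is small or close to some $B \in \B_n$. Your treatment of case~(a) is sound (after noting that $(\alpha,\B)$-stability is preserved under shrinking $\eps$, so $\eps_1$ can be taken as small as needed). However, your way of summing over $\S$ in case~(b) has a genuine gap.

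The flaw is in the deduction ``$p_n\log(1/p_n) = O((\log C)/C)$, hence $\log|\S| = O(m\log C /C)$.'' The first part is fine, since $x\log(1/x)$ is increasing near $0$ and you have $p_n \le \delta_1/(\delta C - C_1)$. But $\log|\S|$ is of order $p_n v(\HH_n)\log(1/p_n) = v(\HH_n)\cdot p_n\log(1/p_n)$, which your bound controls only by $O(v(\HH_n)\log C/C)$, not $O(m\log C/C)$ — and $v(\HH_n)$ can vastly exceed $m$. The only constraint is $v(\HH_n) \le m/(Cp_n)$, so when $p_n$ is very small, $v(\HH_n)$ can be enormous while $m$ stays fixed. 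Concretely, the necessary condition you derived gives only an \emph{upper} bound on $p_n$; there is no lower bound, and in the regime $\log(1/p_n) \gg \delta C/C_1$ (with $p_n v(\HH_n)$, and hence $m$, held roughly constant) the quantity $\log|\S| \approx (C_1 m/C)\log(1/p_n)$ overwhelms $\delta m\log 2$ no matter how large $C$ is chosen. So bounding $|\S|$ on its own and multiplying by the per-$S$ estimate cannot work. The paper's proof avoids this by never estimating $|\S|$ separately: for each size $s$, it bounds the combined quantity $\binom{v(\HH_n)}{s}\cdot\max_{|S| = s}|\I'_S|$. In case~(b) the factor $\binom{v(\HH_n)}{s}$ (which is what blows up for small $p_n$) is multiplied against a bound on $|\I'_S|$ that contains a compensating factor $\binom{\delta' v(\HH_n)}{\delta m - s}$; after the algebra (see the paper's display leading to \eqref{eq:stab-count-case2}) the product is at most $\delta^m\binom{\alpha v(\HH_n)}{m}$ uniformly in $s$ and in $p_n$, and summing over the $\le \eps^2 m + 1$ values of $s$ finishes the proof. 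You should reorganize the counting in case~(b) along these lines rather than trying to control $|\S|$ directly.
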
 

\begin{proof}[Proof of Theorem~\ref{thm:stability-random}]
  The proof is similar to the proof of Theorem~\ref{thm:Sch-weak}. Let $k \in \N$, $\alpha \in (0,1)$, $\p \in [0,1]^\N$, $c \in (0,\infty)$, and $\HH$ and $\B$ be as in the statement of Theorem~\ref{thm:stability-random}. Furthermore, suppose that $\HH$ is $(\alpha, \B)$-stable and fix some small positive $\delta$. Let $\eps$ be a small positive constant, let $n$ be sufficiently large, let $\delta' = \delta/3$ and $\eps' = 3\eps$, and set
  \[
  \F = \big\{ A \subseteq V(\HH_n) \colon |A| \ge (\alpha - \eps')v(\HH_n) \text{ and } |A \setminus B| \ge \delta' v(\HH_n) \text{ for every } B \in \B_n \big\}.
  \]
  Since $\HH$ is $(\alpha, \B)$-stable, it follows that $\HH_n$ is $(\F, \eps)$-dense, provided that $\eps$ is sufficiently small. Let $C' = C_{\ref{thm:main}}(k,\eps,c)$. By Theorem~\ref{thm:main}, there exist a family $\S \subseteq \binom{V(\HH_n)}{\le C'p_nv(\HH_n)}$ and functions $f \colon \S \to \ol{\F}$ and $g \colon \I(\HH_n) \to \S$ such that
  \[
  g(I) \subseteq I \qquad \text{and} \qquad I \setminus g(I) \subseteq f(g(I))
  \]
  for every $I \in \I(\HH_n)$. Let $C = C'/\eps^3$ and assume that $q_n \ge Cp_n$. Let $m = (\alpha-\eps) q_n v(\HH_n)$ and, for the sake of brevity, let us write $V = V(\HH_n)$ and $q = q_n$. Let
  \[
  \I' = \big\{ I \in \I(\HH_n) \colon |I| \ge m \text{ and } |I \setminus B| \ge \delta q v(\HH_n) \text{ for every } B \in \B_n \big\}
  \]
  and let $\A$ denote the event that $\HH_n[V_q]$ contains an independent set $I \in \I'$. We are required to prove that $\Pr(\A)$ tends to $0$ as $n \to \infty$.

  Observe first that
  \begin{equation}
    \label{eq:Pr-not-stable}
    \Pr(\A) \le \sum_{S \in \S} \Pr\Big(\text{$I \subseteq V_q$ for some $I \in \I'$ such that $g(I) = S$}\Big).
  \end{equation}
  Fix an $S \in \S$, let $\I'_S = \{I \in \I' \colon g(I) = S\}$, and note for future reference that 
  \begin{equation}
    \label{eq:S}
    |S| \le C'p_nv(\HH_n) \le \eps^3 q v(\HH_n).
  \end{equation}
  We claim that
  \begin{equation}
    \label{eq:Pr-not-stable-S}
    \Pr\Big(\text{$I \subseteq V_q$ for some $I \in \I'_S$}\Big) \le \Pr\big(S \subseteq V_q\big) \cdot \exp\left( -\frac{\eps^2 q v(\HH_n)}{4} \right).
  \end{equation}
  In order to prove~\eqref{eq:Pr-not-stable-S}, recall that since $f(S) \in \ol{\F}$, we either have $|f(S)| < (\alpha - \eps')v(\HH_n)$ or $|f(S) \setminus B| < \delta' v(\HH_n)$ for some $B \in \B_n$. We therefore consider two cases.

  \medskip
  \noindent
  \textbf{Case 1:} $|f(S)| < (\alpha - \eps')v(\HH_n)$.
  \medskip

  \noindent
  We bound the left-hand side of~\eqref{eq:Pr-not-stable-S} as follows:
  \begin{equation}
    \label{eq:Pr-I'S-1}
    \Pr\Big(\text{$I \subseteq V_q$ for some $I \in \I'_S$}\Big) \le \Pr\big(S \subseteq V_q\big) \cdot \Pr\left(\big|V_q \cap f(S)\big| \ge m - |S| \right).
  \end{equation}
  In order to justify the above inequality, note that for every $I \in \I'_S$, we have $I \setminus S \subseteq f(S)$. Recall that $\eps' = 3\eps$. Since $m - |S| \ge (\alpha - 2\eps)qv(\HH_n)$, by~\eqref{eq:S}, and
  \[
  \Ex[|V_q \cap f(S)|] \le (\alpha - \eps')qv(\HH_n) = (\alpha - 3\eps) q v(\HH_n),
  \]
  then by Chernoff's inequality we have
  \begin{equation}
    \label{eq:Pr-Vq-fS-stab-1}
    \Pr\left(\big|V_q \cap f(S)\big| \ge m - |S| \right) \le \exp\left(-\frac{\eps^2 q v(\HH_n)}{4}\right).
  \end{equation}
  Combining~\eqref{eq:Pr-I'S-1} and~\eqref{eq:Pr-Vq-fS-stab-1}, we obtain~\eqref{eq:Pr-not-stable-S}, as required.

  \medskip
  \noindent
  \textbf{Case 2:} $|f(S) \setminus B| < \delta' v(\HH_n)$ for some $B \in \B_n$.
  \medskip

  \noindent
  We estimate the left-hand side of~\eqref{eq:Pr-not-stable-S} as follows:
  \[
  \Pr\Big(\text{$I \subseteq V_q$ for some $I \in \I'_S$}\Big) \le \Pr\big(S \subseteq V_q\big) \cdot \Pr\Big(\big|V_q \cap (f(S) \setminus B)\big| \ge \delta q v(\HH_n) - |S| \Big).
  \]
  This follows from the definition of $\I'$ and the fact that $I \setminus S \subseteq f(S)$ for every $I \in \I'_S$. Since $|f(S) \setminus B| < \delta' v(\HH_n)$, we have
  \[
  \Ex\big[|V_q \cap (f(S) \setminus B)|\big] < \delta' q v(\HH_n),
  \]
  whereas $\delta q v(\HH_n) - |S| \ge 2\delta' q v(\HH_n)$ by~\eqref{eq:S} and since $\delta = 3\delta'$. By Chernoff's inequality, it follows that
  \[
  \Pr\left(\big|V_q \cap (f(S) \setminus B)\big| \ge 3\delta' q v(\HH_n) - |S| \right) \le \exp\left(-\frac{(\delta')^2 q v(\HH_n)}{4}\right) \le \exp\left(- \frac{\eps^2 q v(\HH_n)}{4}\right)
  \]
  since $\eps$ was chosen sufficiently small. Thus~\eqref{eq:Pr-not-stable-S} follows in this case as well.

  \medskip
  Finally, note that, since $|S| \le \eps^3 q v(\HH_n)$ for every $S \in \S$, as in~\eqref{eq:Pr-Ssum}, we have
  \begin{equation}
    \label{eq:Pr-Ssum-stable}
    \sum_{S \in \S} \Pr\big(S \subseteq V_q\big) \le \sum_{s = 0}^{\eps^3 q v(\HH_n)} \binom{v(\HH_n)}{s} q^s \le v(\HH_n) \cdot \left(\frac{e}{\eps^3} \right)^{\eps^3 q v(\HH_n)} .
  \end{equation}
  Putting~\eqref{eq:Pr-not-stable},~\eqref{eq:Pr-not-stable-S}, and~\eqref{eq:Pr-Ssum-stable} together, we obtain
  \[
  \Pr(\A) \le \sum_{S \in \S} \Pr\big(S \subseteq V_q\big) \exp\left(-\frac{\eps^2 q v(\HH_n)}{4}\right) \le \exp(-\eps^3 q v(\HH_n)),
  \]
  as required.
\end{proof}

\begin{proof}[Proof of Theorem~\ref{thm:stability-counting}]
  Let $k \in \N$, $\alpha \in (0,1)$, $\p \in [0,1]^\N$, $c \in (0,\infty)$ ,and $\HH$ and $\B$ be as in the statement of Theorem~\ref{thm:stability-counting}. Furthermore, suppose that $\HH$ is $(\alpha, \B)$-stable and fix some positive $\delta$. Let $\delta'$ be a sufficiently small positive constant (depending only on $\alpha$ and $\delta$), let $\eps$ be a small positive constant, and let $n$ be sufficiently large. Let $\eps' = 2\eps$, and set
  \[
  \F = \big\{ A \subseteq V(\HH_n) \colon |A| \ge (\alpha - \eps')v(\HH_n) \text{ and } |A \setminus B| \ge \delta' v(\HH_n) \text{ for every } B \in \B_n \big\}.
  \]
  Since $\HH$ is $(\alpha, \B)$-stable, it follows that $\HH_n$ is $(\F, \eps)$-dense, provided that $\eps$ is sufficiently small (as a function of $\delta'$). Let $C' = C_{\ref{thm:main}}(k,\eps,c)$. By Theorem~\ref{thm:main}, there exist a family $\S \subseteq \binom{V(\HH_n)}{\le C'p_nv(\HH_n)}$ and functions $f \colon \S \to \ol{\F}$ and $g \colon \I(\HH_n) \to \S$ such that
  \[
  g(I) \subseteq I \qquad \text{and} \qquad I \setminus g(I) \subseteq f(g(I))
  \]
  for every $I \in \I(\HH_n)$. Let $C = C'/\eps^2$, assume that $m \ge Cp_nv(\HH_n)$, and set
  \[
  \I' = \big\{ I \in \I(\HH_n, m) \colon |I \setminus B| \ge \delta m \text{ for every } B \in \B_n \big\}.
  \]
  Our task is to bound the size of $\I'$ from above. To this end, fix an $S \in \S$ and let $\I'_S = \{I \in \I' \colon g(I) = S\}$. Note for future reference that 
  \begin{equation}
    \label{eq:S-count}
    |S| \le C'p_nv(\HH_n) \le \eps^2 m.
  \end{equation}
  Since $f(S) \in \ol{\F}$, we either have $|f(S)| < (\alpha - \eps')v(\HH_n)$ or $|f(S) \setminus B| < \delta' v(\HH_n)$ for some $B \in \B_n$. We therefore consider two cases.

  \medskip
  \noindent
  \textbf{Case 1:} $|f(S)| < (\alpha - \eps')v(\HH_n)$.
  \medskip

  \noindent
  We claim that in this case
  \begin{equation}
    \label{eq:stab-count-case1}
    \binom{v(\HH_n)}{|S|} \cdot |\I_S'| \le \frac{(1-\eps)^m}{2m} \binom{\alpha v(\HH_n)}{m}.
  \end{equation}
  To prove~\eqref{eq:stab-count-case1}, we first estimate the size of $\I'_S$ as follows:
  \[
  |\I_S'| \le \binom{|f(S)|}{m - |S|} \le \binom{(\alpha - \eps')v(\HH_n)}{m - |S|}.
  \]
  The above inequality follows since $I \setminus S \subseteq f(S)$ for every $I \in \I'_S$. 
  
 It follows, using~\eqref{eq:binomial-3} and~\eqref{eq:binomial-2}, as in~\eqref{eq:IS-Sch-count}, that
$$
 |\I'_S| \le \binom{(\alpha - \eps')v(\HH_n)}{m - |S|} \le \bigg( \frac{\alpha-\eps'}{\alpha} \bigg)^{m - |S|} \left( \frac{m}{\alpha v(\HH_n) - m} \right)^{|S|} \binom{\alpha v(\HH_n)}{m}.
$$
 Now, if $m \ge (\alpha-\eps')v(\HH_n)$, then $\I' \subseteq \F$ and hence $\I' = \emptyset$, since $\HH_n$ is $(\F, \eps)$-dense. We may therefore assume that $m < (\alpha - \eps')v(\HH_n) = (\alpha - 2\eps)v(\HH_n)$. We obtain
   \[
  \binom{v(\HH_n)}{|S|} |\I_S'| \le \bigg( \frac{\alpha-\eps'}{\alpha} \bigg)^{m/2} \left( \frac{e v(\HH_n)}{|S|} \cdot \frac{m}{2\eps v(\HH_n)} \right)^{|S|} \binom{\alpha v(\HH_n)}{m} \le \frac{(1-\eps)^m}{2m} \binom{\alpha v(\HH_n)}{m},
  \]
since $|S| \le \eps^2 m$ and $\eps' = 2\eps$,  as claimed.
 
  \medskip
  \noindent
  \textbf{Case 2:} $|f(S) \setminus B| < \delta' v(\HH_n)$ for some $B \in \B_n$.
  \medskip

  \noindent
  We claim that in this case
  \begin{equation}
    \label{eq:stab-count-case2}
    \binom{v(\HH_n)}{|S|} \cdot |\I_S'| \le \delta^m \binom{\alpha v(\HH_n)}{m}.
  \end{equation}
  To prove~\eqref{eq:stab-count-case2}, we first estimate the size of $\I'_S$ as follows:
  \begin{equation}
    \label{eq:IS'-count-2}
    |\I_S'| \le \binom{|f(S) \setminus B|}{\delta m - |S|} \binom{|f(S)|}{m - \delta m} \le \binom{\delta' v(\HH_n)}{\delta m - |S|}\binom{v(\HH_n)}{m - \delta m}.
  \end{equation}
  To see the first inequality, recall that every $I \in \I'_S$ contains at least $\delta m - |S|$ elements of $f(S) \setminus B$ for every $B \in \B_n$. Recall that $|S| \le \eps^2m$ and note that therefore, if $m \ge (\alpha/2)v(\HH_n)$, then $\delta m - |S| \ge \delta' v(\HH_n)$ and hence $\I'_S = \emptyset$. Thus, we may assume that $m < (\alpha/2)v(\HH_n)$. It follows, using~\eqref{eq:binomial-2} and \eqref{eq:binomial-4}, that
  \begin{equation}
    \label{eq:IS'-count-2-2}
    \binom{v(\HH_n)}{m - \delta m} \le \left(\frac{m}{v(\HH_n) - m} \right)^{\delta m} \binom{v(\HH_n)}{m} \le \left(\frac{2m}{v(\HH_n)} \right)^{\delta m} \left(\frac{2}{\alpha}\right)^m \binom{ \alpha v(\HH_n)}{m}.
  \end{equation}
  Hence, by~\eqref{eq:S-count},~\eqref{eq:IS'-count-2}, and~\eqref{eq:IS'-count-2-2}, using~\eqref{eq:binomial-1}, we have
  \begin{align*}
    \binom{v(\HH_n)}{|S|} |\I_S'| & \le \left( \frac{ev(\HH_n)}{|S|} \right)^{|S|} \left( \frac{2e\delta' v(\HH_n)}{\delta m}\right)^{\delta m - |S|} \left(\frac{2m}{v(\HH_n)} \right)^{\delta m} \left(\frac{2}{\alpha}\right)^m \binom{ \alpha v(\HH_n)}{m}\\
    & \le \left( \frac{1}{|S|} \cdot \frac{\delta m}{2\delta'} \right)^{|S|} \left( \frac{4e\delta'}{\delta} \right)^{\delta m} \left(\frac{2}{\alpha}\right)^m \binom{ \alpha v(\HH_n)}{m} \le \delta^m \binom{\alpha v(\HH_n)}{m},
    \end{align*}
    as claimed, since $|S| \le\eps^2 m$ and $\delta'$ and $\eps$ were chosen to be sufficiently small. Indeed, note that (for this calculation, and assuming that $\delta$ is sufficiently small) $\delta' = \delta^3 \cdot (\delta \alpha / 2e)^{1/\delta}$ and $\eps < \delta'$ suffice.

  \medskip

  Finally, by~\eqref{eq:stab-count-case1} and~\eqref{eq:stab-count-case2}, we obtain
  \[
  |\I'| = \sum_{S \in \S} |\I_S'| \le \sum_{s = 0}^{\eps^2 m} \binom{v(\HH_n)}{s} \max\big\{ |\I_S'| \colon |S| = s \big\} \le (1-\eps)^m \binom{\alpha v(\HH_n)}{m},
  \]
  as claimed.
\end{proof}

\section{Tur{\'a}n's problem in random graphs}

\label{sec:Turan}

In this section, we shall deduce from Theorems~\ref{thm:Sch-weak} and~\ref{thm:stability-random} the sparse random analogues of the classical theorems of Erd{\H o}s and Stone~\cite{ErSt} and Tur{\'a}n~\cite{Turan} and of Erd{\H o}s and Simonovits~\cite{ES1,ES2}, Theorems~\ref{thm:Turan-Gnp} and~\ref{thm:stability-Gnp}. In fact, we will prove a natural generalization of Theorem~\ref{thm:Turan-Gnp} to $t$-uniform hypergraphs, Theorem~\ref{thm:t-Turan-Gnp} below, which was already proved by Conlon and Gowers~\cite{CG} and Schacht~\cite{Sch}. We first recall the following generalization of the notion of $2$-density of a graph to $t$-uniform hypergraphs.

\begin{defn}
  Let $H$ be a $t$-uniform hypergraph with at least $t + 1$ vertices. We define the \emph{$t$-density} of $H$, denoted by $m_t(H)$, by
  \[
  m_t(H) = \max \left\{ \frac{e(H') - 1}{v(H') - t} \colon H' \subseteq H \text{ with } v(H') \ge t + 1 \right\}.
  \]
\end{defn}

We also recall that the \emph{Tur{\'a}n density} of a $t$-uniform hypergraph $H$, denoted $\pi(H)$, is defined by
\begin{equation}
  \label{eq:piH}
  \pi(H) = \lim_{n \to \infty} \frac{\ex\big(K_n^{t}, H\big)}{\binom{n}{t}},
\end{equation}
where, as usual, $\ex\big(K_n^{t}, H \big)$ is the Tur{\'a}n number for $H$, that is, the maximum number of edges in an $H$-free $t$-uniform hypergraph with $n$ vertices.

\begin{thm}
  \label{thm:t-Turan-Gnp}
  For every $t$-uniform hypergraph $H$ with $\Delta(H) \ge 2$ and every positive $\delta$, there exists a positive constant $C$ such that if $q_n \ge Cn^{-1/m_t(H)}$, then
  \[
  \Pr\left( \ex\big(G^t(n,q_n), H\big) \le (\pi(H) + \delta) q_n \binom{n}{t} \right) \to 1
  \]
  as $n \to \infty$.
\end{thm}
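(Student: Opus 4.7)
The plan is to apply Theorem~\ref{thm:Sch-weak} with $\alpha = \pi(H)$ to the auxiliary hypergraph $\HH_n$ which encodes copies of $H$ in $K_n^t$. Concretely, set $V(\HH_n) = \binom{[n]}{t}$ and let $\HH_n$ be $e(H)$-uniform, with one hyperedge for the $t$-edge set of each copy of $H$ in $K_n^t$. Then independent sets in $\HH_n$ correspond exactly to $H$-free subhypergraphs of $K_n^t$, and $V(\HH_n)_{q_n}$ has the same distribution as the edge set of $G^t(n,q_n)$. Consequently
\[
\alpha\big(\HH_n[V(\HH_n)_{q_n}]\big) = \ex\big(G^t(n,q_n), H\big),
\]
so the conclusion of Theorem~\ref{thm:t-Turan-Gnp} will follow immediately from Theorem~\ref{thm:Sch-weak} once its two hypotheses are verified, with $p_n = n^{-1/m_t(H)}$.

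The $\pi(H)$-density of $(\HH_n)$ is precisely the supersaturation phenomenon for $H$: for every $\delta' > 0$ there is $\eps > 0$ such that, for $n$ large, every $U \subseteq \binom{[n]}{t}$ with $|U| \ge (\pi(H) + \delta')\binom{n}{t}$ contains at least $\eps n^{v(H)}$ copies of $H$. This is a standard Varnavides-style averaging (identical in spirit to the proof of Lemma~\ref{lemma:Varn}) on top of the definition~\eqref{eq:piH} of $\pi(H)$: a random $v(H)$-subset of $[n]$ has, with constant probability, $U$-density at least $\pi(H) + \delta'/2$, and then by~\eqref{eq:piH} induces at least one copy of $H$ in $U$. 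Since $e(\HH_n) = \Theta(n^{v(H)})$, this translates to $e(\HH_n[U]) \ge \eps' e(\HH_n)$, as required.

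For the degree bound, let $T$ be a set of $\ell$ edges of $K_n^t$ spanning some set of $v'$ vertices of $[n]$. If $T$ does not form a subhypergraph isomorphic to some $H' \subseteq H$ with $e(H') = \ell$ and $v(H') = v'$, then $\deg_{\HH_n}(T) = 0$; otherwise, any copy of $H$ containing $T$ is obtained by extending a fixed embedding of $H'$ to an embedding of $H$ into $[n]$, which gives $\deg_{\HH_n}(T) \le C_H \cdot n^{v(H)-v'}$ for some constant $C_H$ depending only on $H$. Since $v(\HH_n) = \Theta(n^t)$ and $e(\HH_n) = \Theta(n^{v(H)})$, the required inequality reduces to
\[
n^{\,t - v'} \le O\big(p_n^{\ell - 1}\big) = O\big(n^{-(\ell-1)/m_t(H)}\big).
\]
For $\ell = 1$ we have $v' = t$ and both sides are $\Theta(1)$. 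For $\ell \ge 2$, distinct $t$-edges share at most $t-1$ vertices, so $v' \ge t+1$, and applying the definition of $m_t(H)$ to $H'$ yields $m_t(H) \ge (\ell - 1)/(v' - t)$, which is exactly what is needed.

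The main (modest) obstacle is just carrying out these two verifications cleanly in the $t$-uniform setting: the degree bound is essentially built into the definition of $m_t(H)$, while the supersaturation step is the standard hypergraph version of Varnavides' averaging argument. Once both are in place, Theorem~\ref{thm:Sch-weak} delivers the a.a.s.~upper bound $\ex(G^t(n,q_n), H) \le (\pi(H) + \delta) q_n \binom{n}{t}$. The hypothesis $\Delta(H) \ge 2$ ensures $m_t(H) \ge 1/(t-1) > 0$, which makes $p_n = n^{-1/m_t(H)}$ a meaningful probability tending to $0$, and moreover guarantees $q_n v(\HH_n) \to \infty$ whenever $q_n \ge C p_n$, so that Theorem~\ref{thm:Sch-weak} indeed applies.
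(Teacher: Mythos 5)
Your proposal is correct and follows exactly the paper's route: defining the $e(H)$-uniform hypergraph of copies of $H$ in $K_n^t$, taking $p_n = n^{-1/m_t(H)}$ and $\alpha = \pi(H)$, and feeding it into Theorem~\ref{thm:Sch-weak} after verifying $\alpha$-density via supersaturation and the $\Delta_\ell$-bounds via the definition of $m_t(H)$. The only cosmetic difference is that you re-derive the degree bounds inline, whereas the paper delegates that computation to Proposition~\ref{prop:Delta-bal-hyp} and the supersaturation step to a citation of Erd\H{o}s--Simonovits.
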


Once again, we emphasize that we actually obtain essentially optimal bounds on the probability in the above statement, i.e., bounds of the form  $1 - \exp(-bq_nn^t)$ for some positive constant $b$ that depends only on $H$ and $\delta$.

Theorems~\ref{thm:t-Turan-Gnp} and~\ref{thm:stability-Gnp}, and hence also Theorem~\ref{thm:Turan-Gnp}, will follow easily from our general transference results, Theorems~\ref{thm:Sch-weak} and~\ref{thm:stability-random}, the classical supersaturation results of Erd{\H o}s and Simonovits~\cite{ES83} (for Theorem~\ref{thm:t-Turan-Gnp}), and the stability theorem of Erd{\H o}s and Simonovits~\cite{ES1,ES2} together with the so-called graph removal lemma (for Theorem~\ref{thm:stability-Gnp}). We only need to check that the hypergraph of copies of $H$ in the complete hypergraph $K_n^t$, to which we would like to apply our transference theorems, satisfies the assumptions of Theorems~\ref{thm:Sch-weak} and \ref{thm:stability-random}. Since we are going to use this fact several times in this and later sections, we state it as a separate proposition. 

Let $H$ be an arbitrary $t$-uniform hypergraph. The \emph{hypergraph of copies of $H$ in $K_n^t$} is the $e(H)$-uniform hypergraph on the vertex set $E(K_n^t)$ whose edges are the edge sets of all copies of $H$ in $K_n^t$.

\begin{prop}
  \label{prop:Delta-bal-hyp}
  Let $n$ and $t$ be integers with $t \ge 2$ and let $H$ be a $t$-uniform hypergraph. Set $k = e(H)$ and let $\HH$ be the $k$-uniform hypergraph of copies of $H$ in $K_n^t$. There exists a positive constant $c$ such that, letting $p = n^{-1/m_t(H)}$, 
\begin{equation} \label{item:Delta-bal-hyp-b}
\Delta_\ell(\HH) \le c \cdot p^{\ell-1} \frac{e(\HH)}{v(\HH)} 
\end{equation}
for every $\ell \in [k]$.
\end{prop}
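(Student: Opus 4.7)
The plan is to reduce the required inequality to the defining inequality of $m_t(H)$, via a routine counting of extensions. First, I would compute the two sides of the target bound asymptotically: $v(\HH) = \binom{n}{t} = \Theta(n^t)$ and $e(\HH) = \Theta(n^{v(H)})$, where the constant absorbs $|\Aut(H)|$ (we count one edge of $\HH$, with multiplicity, per labelled embedding of $H$ into $K_n^t$). Hence, with $p = n^{-1/m_t(H)}$, the claim is equivalent to
\[
\Delta_\ell(\HH) \le C_1 \cdot n^{v(H) - t - (\ell - 1)/m_t(H)}
\]
for every $\ell \in [k]$ and some constant $C_1$ depending only on $H$ and $t$.

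Next, I would fix a set $T \subseteq E(K_n^t)$ with $|T| = \ell$ and let $V(T) \subseteq [n]$ denote the set of vertices incident to some edge in $T$, of size $v(T) \le t\ell$. Any copy of $H$ in $K_n^t$ whose edge set contains $T$ arises from an injection $\phi \colon V(H) \to [n]$ whose restriction to some $V(H') \subseteq V(H)$ is an isomorphism from a subhypergraph $H' \subseteq H$ with $e(H') = \ell$ and $v(H') = v(T)$ onto $(V(T), T)$. For each of the boundedly many choices of such $H'$ and of an isomorphism $V(H') \to V(T)$, the number of extensions of $\phi$ to the remaining $v(H) - v(T)$ vertices of $H$ is at most $(n - v(T))^{v(H) - v(T)}$. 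Summing over these choices yields $\deg_\HH(T) \le C_2 \cdot n^{v(H) - v(T)}$ for a constant $C_2 = C_2(H)$.

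Finally, I would translate the exponent $v(H) - v(T)$ into the required $v(H) - t - (\ell - 1)/m_t(H)$ using the definition of $m_t(H)$. When $v(T) \ge t + 1$, the witnessing $H' \subseteq H$ has $v(H') \ge t + 1$, so by definition of $m_t(H)$,
\[
\frac{\ell - 1}{v(T) - t} \;=\; \frac{e(H') - 1}{v(H') - t} \;\le\; m_t(H),
\]
which rearranges to $v(T) \ge t + (\ell - 1)/m_t(H)$, giving the claim. The degenerate case $v(T) = t$ can occur only when $\ell = 1$ (since $K_n^t$ contains a single edge on any $t$-set and $T$ is a set of distinct edges), and then the target bound reads $\Delta_1(\HH) \le C_1 \cdot n^{v(H) - t}$, which follows directly from $\deg_\HH(T) \le C_2 \cdot n^{v(H) - t}$. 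The only subtlety in the argument is precisely this separation of the $v(T) = t$ case, since the definition of $m_t(H)$ excludes subhypergraphs on $t$ vertices; once this is isolated, the rest is a standard extension count.
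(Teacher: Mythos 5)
Your proposal is correct and follows essentially the same route as the paper's proof: compute $v(\HH) = \Theta(n^t)$ and $e(\HH) = \Theta(n^{v(H)})$, bound $\deg_\HH(T)$ by a bounded multiple of $n^{v(H)-v(H')}$ where $H' \subseteq H$ is the preimage of $T$ under an embedding, and invoke the definition of $m_t(H)$ in the form $p^{e(H')-1}n^{v(H')-t} \ge 1$. Your explicit separation of the $\ell = 1$ (equivalently $v(T) = t$) case is handled in the paper implicitly, since that inequality holds with equality there; otherwise the two arguments coincide.
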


\begin{proof}
  Note that $v(\HH) = \binom{n}{t} = \Theta(n^t)$ and that $e(\HH) = \frac{(v(H))!}{|\Aut(H)|} \cdot \binom{n}{v(H)} = \Theta\big( n^{v(H)} \big)$. By the definition of $p$ and $m_t(H)$, we have 
  \begin{equation}\label{eq:pbound:def:mtH}
    p^{e(H')-1}n^{v(H')-t} \ge 1
  \end{equation}
  for every $H' \subseteq H$. Now, for each $\ell \in [k]$,
  \[
  \Delta_\ell(\HH) \le c' \cdot \max\left\{ n^{v(H) - v(H')} \colon H' \subseteq H \text{ with } e(H') = \ell \right\}
  \]
  for some positive constant $c'$. Since $e(\HH) / v(\HH) \ge c'' \cdot n^{v(H)-t}$ for some constant $c''$, it follows that
  \begin{align*}
    \Delta_\ell(\HH) \cdot \left( p^{\ell-1} \frac{e(\HH)}{v(\HH)} \right)^{-1} & \le c' \cdot \frac{v(\HH)}{e(\HH)} \cdot \max_{H' \subseteq H \colon e(H') = \ell} \left( \frac{n^{v(H)}}{p^{e(H')-1}n^{v(H')}} \right) \\
    & \le \frac{c'}{c''} \cdot  \max_{H' \subseteq H \colon e(H') = \ell} \left( \frac{1}{p^{e(H')-1}}{n^{v(H')-t}} \right) \le \frac{c'}{c''},
  \end{align*}
  where the last inequality follows by~\eqref{eq:pbound:def:mtH}.
\end{proof}

\begin{proof}[Proof of Theorem~\ref{thm:t-Turan-Gnp}]
  Let $H$ be a $t$-uniform hypergraph, let $k = e(H)$, and let $(\HH_n)_{n \in \N}$ be the sequence of $k$-uniform hypergraphs of copies of $H$ in $K_n^t$. Let $\alpha = \pi(H)$, let $\delta$ be a positive constant, and let $p_n = n^{-1/m_t(H)}$. It follows easily from the supersaturation theorem of Erd{\H o}s and Simonovits~\cite{ES83} that $\HH$ is $\alpha$-dense, see~\cite{Sch}. Let $C = C_{\ref{thm:Sch-weak}}(\HH, \delta)$ and assume that $q_n \ge Cp_n = Cn^{-1/m_t(H)}$. Note that the assumption that $H$ contains a vertex of degree at least $2$ implies that $m_t(H) > 1/t$ and hence $q_n v(\HH_n) \to \infty$ as $n \to \infty$. Together with Proposition~\ref{prop:Delta-bal-hyp}, this implies that $\HH$ satisfies the assumptions of Theorem~\ref{thm:Sch-weak} and hence with probability tending to $1$ as $n \to \infty$,
\[
\ex\big( G^t(n,q_n), H \big) = \alpha\left( \HH_n\big[E(G^t(n,q_n))\big] \right) \le ( \pi(H) + \delta ) q_n \binom{n}{t},
\]
as required.
\end{proof}

In the proof of Theorems~\ref{thm:stability-Gnp} and~\ref{thm:H-free-structure}, we shall need the following proposition, which is a fairly straightforward consequence of the Erd{\H o}s-Simonovits stability theorem~\cite{ES1,ES2} and the graph removal lemma~\cite{ErFrRo}. A proof of this statement can be found in~\cite{Sam}. We remark that a new proof of the graph removal lemma, which avoids the use of the Szemer\'edi regularity lemma, was given recently by Fox~\cite{Fox}. 

\begin{prop}
  \label{prop:stability-removal}
  For every graph $H$ and every positive $\delta$, there exists a positive $\eps$ such that the following holds for every $n \in \N$. If $G$ is an $n$-vertex graph with 
  \[
  e(G) \ge \left( 1 - \frac{1}{\chi(H) - 1} - \eps \right) \binom{n}{2},
  \] 
  then either $G$ may be made $(\chi(H) - 1)$-partite by removing from it at most $\delta n^2$ edges or $G$ contains at least $\eps n^{v(H)}$ copies of $H$.
\end{prop}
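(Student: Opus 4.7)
The plan is to derive the proposition as a two-step consequence of the classical Erd\H os--Simonovits stability theorem together with the graph removal lemma. Recall that the stability theorem states: for every graph $H$ and every $\delta_1 > 0$, there exists $\eps_1 = \eps_1(H,\delta_1) > 0$ such that every $H$-free $n$-vertex graph with at least $\left(1 - \frac{1}{\chi(H)-1} - \eps_1\right)\binom{n}{2}$ edges can be made $(\chi(H)-1)$-partite by deleting at most $\delta_1 n^2$ edges. The removal lemma states: for every graph $H$ and every $\delta_2 > 0$, there exists $\eps_2 = \eps_2(H,\delta_2) > 0$ such that any $n$-vertex graph with fewer than $\eps_2 n^{v(H)}$ copies of $H$ can be made $H$-free by deleting at most $\delta_2 n^2$ edges.

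My strategy is to chain these together in the right order. First I would apply the stability theorem with $\delta_1 = \delta/2$ to obtain $\eps_1$. Next I would choose $\delta_2 = \min\{\delta/2,\, \eps_1/4\}$ and apply the removal lemma to obtain $\eps_2$. Finally, I would set $\eps = \min\{\eps_1/2,\, \eps_2\}$. Now, given a graph $G$ as in the proposition satisfying the edge hypothesis, suppose $G$ contains fewer than $\eps n^{v(H)} \le \eps_2 n^{v(H)}$ copies of $H$; by the removal lemma there is an $H$-free subgraph $G' \subseteq G$ with $e(G) - e(G') \le \delta_2 n^2 \le 2\delta_2 \binom{n}{2}$. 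Using $\eps + 2\delta_2 \le \eps_1$, the subgraph $G'$ still satisfies the edge hypothesis of the stability theorem, so $G'$ can be made $(\chi(H)-1)$-partite by deleting at most $(\delta/2) n^2$ further edges. The total number of edges deleted from $G$ is then at most $\delta_2 n^2 + (\delta/2) n^2 \le \delta n^2$, as required.

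The main obstacle here is not really any deep difficulty, but rather the careful bookkeeping of parameters: the budget $\delta_2$ of the removal lemma must simultaneously (i) fit inside the budget $\delta$ of the final conclusion and (ii) be small enough that removing $\delta_2 n^2$ edges from $G$ does not push the remaining graph below the edge threshold required to invoke the stability theorem. The choice $\delta_2 \le \min\{\delta/2,\,\eps_1/4\}$ satisfies both constraints. Finally, I would handle very small $n$ (below the implicit thresholds of the two black boxes) by shrinking $\eps$ further: for such $n$ one can arrange either that the edge hypothesis is vacuous or that $\eps n^{v(H)} < 1$, so that already a single copy of $H$ in $G$ suffices to verify the second alternative.
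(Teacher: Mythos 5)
Your strategy---strip out the copies of $H$ via the removal lemma, then invoke Erd\H{o}s--Simonovits stability on the resulting $H$-free subgraph---is exactly the combination the paper gestures at (the paper gives no proof of its own and simply cites Samotij). Two points, however, need attention. The inequality $\delta_2 n^2 \le 2\delta_2\binom{n}{2}$ runs the wrong way: in fact $n^2 = 2\binom{n}{2} + n > 2\binom{n}{2}$, so the removal step costs slightly \emph{more} than $2\delta_2$ in the normalised edge count $e(\cdot)/\binom{n}{2}$. For $n \ge 3$ you need something like $\eps + 3\delta_2 \le \eps_1$, and with your choices $\eps = \eps_1/2$ and $\delta_2 = \eps_1/4$ the left-hand side is $5\eps_1/4 > \eps_1$, so the parameters as written do not close. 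This is a clean bookkeeping fix (take $\eps$ and $\delta_2$ a constant factor smaller), but it does need fixing.

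The small-$n$ patch is the more substantive issue. When $\eps n^{v(H)} < 1$, a single copy of $H$ indeed verifies the second alternative, but if $G$ happens to be $H$-free you get nothing from this, and you have not shown the first alternative then holds. It need not: with $H = K_3$ and $\delta < 1/25$, the graph $G = C_5$ has $e(G) = 5 = \frac{1}{2}\binom{5}{2}$, is triangle-free, and needs a full edge removed to become bipartite, so \emph{neither} alternative holds for any $\eps > 0$. This shows the ``for every $n \in \N$'' in the stated proposition is literally too strong; what the paper actually uses (through Definition~\ref{defn:aB-stable}, which builds in a threshold $n_0$) is the version for all sufficiently large $n$, and that is precisely what your removal-plus-stability argument yields once the constants are repaired. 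You should either state the conclusion for $n \ge n_0$ or flag explicitly that this is how the proposition must be read.
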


\begin{proof}[Proof of Theorem~\ref{thm:stability-Gnp}]
  Let $H$ be a graph, let $k = e(H)$, and let $(\HH_n)_{n \in \N}$ be the sequence of $k$-uniform hypergraphs of copies of $H$ in $K_n$. Let $\alpha = \pi(H) = \left(1 - \frac{1}{\chi(H)-1}\right)$, let $\delta$ be a positive constant, and let $p_n = n^{-1/m_2(H)}$. Moreover, let $\B_n$ be the family of all complete $(\chi(H)-1)$-partite subgraphs of $K_n$. By Proposition~\ref{prop:stability-removal}, $\HH$ is $(\alpha,\B)$-stable. Let $C = C_{\ref{thm:stability-random}}(\HH, \delta)$, let $\eps = \eps_{\ref{thm:stability-random}}(\HH, \delta)$, and assume that $q_n \ge Cp_n = Cn^{-1/m_2(H)}$. Note that the assumption that $H$ contains a vertex of degree at least $2$ implies that $m_2(H) > 1/2$ and hence $q_n v(\HH_n) \to \infty$ as $n \to \infty$. Together with Proposition~\ref{prop:Delta-bal-hyp}, the discussion above implies that $\HH$ satisfies the assumptions of Theorem~\ref{thm:stability-random} and hence with probability tending to $1$ as $n \to \infty$, every independent set $G' \subseteq G(n,q_n)$ with $|G'| \ge (\alpha-\eps) q_n v(\HH_n)$ satisfies $|G' \setminus B| \le \delta q_n v(\HH_n)$ for some $B \in \B_n$. In other words, with probability tending to $1$ as $n \to \infty$, every $H$-free subgraph of $G(n,q_n)$ with at least $\left(1-\frac{1}{\chi(H)-1}-\eps\right)\binom{n}{2}q_n$ edges can be made $(\chi(H)-1)$-partite by removing from it at most $\delta q_n \binom{n}{2}$ edges, as required.
\end{proof}

\section{The typical structure of $H$-free graphs}

\label{sec:Turan-counting}

In this section, we shall deduce from Theorems~\ref{thm:Sch-counting} and~\ref{thm:stability-counting} the sparse analogue of the theorem of Erd{\H o}s, Frankl, and R{\"o}dl~\cite{ErFrRo}, Theorem~\ref{thm:ErFrRo-sparse}, and an approximate sparse analogue of the result of Erd{\H o}s, Kleitman, and Rothschild~\cite{ErKlRo}, Theorem~\ref{thm:H-free-structure}. We stress once again that neither proof employs Szemer{\'e}di's regularity lemma. In order to prove Theorem~\ref{thm:ErFrRo-sparse}, we are actually going to prove the following natural generalization of it to $t$-uniform hypergraphs. Generalizing the definition stated in Section~\ref{sec:typical-structure}, given integers $n$ and $m$ with $0 \le m \le \binom{n}{t}$ and a $t$-uniform hypergraph $H$, let us denote by $f_{n,m}(H)$ the number of $H$-free $t$-uniform hypergraphs on the vertex set $[n]$ that have exactly $m$ edges. 

\begin{thm}
  \label{thm:t-ErFrRo-sparse}
  For every $t$-uniform hypergraph $H$ and every positive $\delta$, there exists a positive constant $C$ such that the following holds. For every $n \in \N$, if $m \ge Cn^{t-1/m_t(H)}$, then
  \[
  \binom{\ex(n,H)}{m} \le f_{n,m}(H) \le \binom{\ex(n,H) + \delta n^t}{m}.
  \]
\end{thm}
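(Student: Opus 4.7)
The lower bound is immediate: fix an extremal $H$-free $t$-uniform hypergraph $G^*$ on $[n]$ with $\ex(n,H)$ edges; every one of the $\binom{\ex(n,H)}{m}$ $m$-subsets of $E(G^*)$ defines a distinct $H$-free hypergraph on $[n]$ with exactly $m$ edges. So I will focus on the upper bound.

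The plan is to deduce the upper bound from Theorem~\ref{thm:Sch-counting} applied to the $e(H)$-uniform hypergraph $\HH_n$ of copies of $H$ in $K_n^t$, introduced just before Proposition~\ref{prop:Delta-bal-hyp}. By construction, the independent sets of $\HH_n$ are exactly the $H$-free $t$-uniform hypergraphs on $[n]$, so $|\I(\HH_n, m)| = f_{n,m}(H)$. Setting $p_n = n^{-1/m_t(H)}$, Proposition~\ref{prop:Delta-bal-hyp} provides a constant $c$ (depending only on $H$) such that $\Delta_\ell(\HH_n) \le c \cdot p_n^{\ell-1} e(\HH_n)/v(\HH_n)$ for every $\ell \in [e(H)]$, so the degree condition in Theorem~\ref{thm:Sch-counting} is automatic.

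To invoke Theorem~\ref{thm:Sch-counting} with $\alpha = \pi(H)$, I must verify that $\HH = (\HH_n)_{n \in \N}$ is $\pi(H)$-dense. This is precisely the content of the classical supersaturation theorem of Erd\H{o}s and Simonovits~\cite{ES83}: for every $\delta'' > 0$ there is $\eps > 0$ such that any $t$-uniform hypergraph on $n$ vertices with at least $(\pi(H) + \delta'')\binom{n}{t}$ edges contains at least $\eps n^{v(H)}$ copies of $H$. Translated to $\HH_n$, this says that any $U \subseteq V(\HH_n) = E(K_n^t)$ with $|U| \ge (\pi(H) + \delta'')v(\HH_n)$ satisfies $e(\HH_n[U]) \ge \eps \cdot n^{v(H)} \ge \eps' e(\HH_n)$ for some $\eps' > 0$ depending only on $\delta''$ and $H$, which is the definition of $\pi(H)$-density.

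With the hypotheses verified, fix $\delta' = \delta/(2 \cdot t!)$ and let $C = C_{\ref{thm:Sch-counting}}(\HH, \delta')$. For $m \ge Cp_nv(\HH_n)$, which is of order $n^{t - 1/m_t(H)}$ and hence matches the hypothesis of the theorem (absorbing the $\binom{n}{t}/n^t$ factor into $C$), Theorem~\ref{thm:Sch-counting} gives
\[
f_{n,m}(H) \;=\; |\I(\HH_n,m)| \;\le\; \binom{(\pi(H) + \delta')\binom{n}{t}}{m}.
\]
To convert this to the desired bound $\binom{\ex(n,H) + \delta n^t}{m}$, I use the standard averaging argument (removing a vertex) which shows that $\ex(n,H)/\binom{n}{t}$ is non-increasing in $n$, so $\ex(n,H) \ge \pi(H)\binom{n}{t}$ for every $n$. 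Consequently $(\pi(H) + \delta')\binom{n}{t} \le \ex(n,H) + \delta' n^t/t! \cdot t! \le \ex(n,H) + \delta n^t$ (with room to spare), and monotonicity of $\binom{\cdot}{m}$ in the top entry yields the claimed upper bound. There is no truly hard step here: Theorem~\ref{thm:Sch-counting} is doing all the combinatorial work, and the remaining ingredients (Proposition~\ref{prop:Delta-bal-hyp}, Erd\H{o}s--Simonovits supersaturation, and $\ex(n,H) \ge \pi(H)\binom{n}{t}$) are either already in this paper or classical. The only mild subtlety is making sure $\delta'$ is chosen small enough to absorb the difference between $\pi(H)\binom{n}{t}$ and $\ex(n,H) + \delta n^t$, which is what the factor $1/t!$ above is for.
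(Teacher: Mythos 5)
Your proof is correct and follows essentially the same route as the paper's: apply Theorem~\ref{thm:Sch-counting} to the hypergraph of copies of $H$ in $K_n^t$, using Proposition~\ref{prop:Delta-bal-hyp} for the degree condition and Erd\H{o}s--Simonovits supersaturation for $\pi(H)$-density, then pass from $(\pi(H)+\delta')\binom{n}{t}$ to $\ex(n,H)+\delta n^t$ via $\pi(H)\binom{n}{t}\le\ex(n,H)$. (Your choice $\delta'=\delta/(2\cdot t!)$ is more cautious than necessary---since $\binom{n}{t}\le n^t$, the paper can and does simply take $\delta'=\delta$---but this is harmless.)
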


We remark that Theorem~\ref{thm:t-ErFrRo-sparse} refines a result of Nagle, R\"odl, and Schacht~\cite{NRS}, who, using the hypergraph regularity lemma, generalized~\eqref{eq:fnH-upper} to $t$-uniform hypergraphs.

\begin{proof}[Proof of Theorem~\ref{thm:t-ErFrRo-sparse}]
  Let $H$ be a $t$-uniform hypergraph, let $k = e(H)$, and let $(\HH_n)_{n \in \N}$ be the sequence of $k$-uniform hypergraphs of copies of $H$ in $K_n^t$. Let $\alpha = \pi(H)$, see~\eqref{eq:piH}, let $\delta$ be a positive constant, and let $p_n = n^{-1/m_t(H)}$. It follows easily from the supersaturation theorem of Erd{\H o}s and Simonovits~\cite{ES83} that $\HH$ is $\alpha$-dense, see~\cite{Sch}. Let $C = C_{\ref{thm:Sch-counting}}(\HH, \delta)$ and assume that $m \ge C n^{t-1/m_t(H)} \ge C p_n v(\HH_n)$. Note that Proposition~\ref{prop:Delta-bal-hyp} implies that $\HH$ satisfies the assumptions of Theorem~\ref{thm:Sch-counting} and hence
\[
f_{n,m}(H) = |I(\HH_n, m)| \le \binom{(\pi(H) + \delta)\binom{n}{t}}{m} \le \binom{\ex(n,H) + \delta n^t}{m},
\]
as required. The claimed lower bound on $f_{n,m}(H)$ is trivial.
\end{proof}

\begin{proof}[Proof of Theorem~\ref{thm:H-free-structure}]
   Let $H$ be a graph, let $k = e(H)$, and let $(\HH_n)_{n \in \N}$ be the sequence of $k$-uniform hypergraphs of copies of $H$ in $K_n$. Let $\alpha = \pi(H) = 1 - \frac{1}{\chi(H)-1}$, let $\delta$ be a positive constant, and let $p_n = n^{-1/m_2(H)}$. Moreover, let $\B_n$ be the family of all complete $(\chi(H)-1)$-partite subgraphs of $K_n$. By Proposition~\ref{prop:stability-removal}, $\HH$ is $(\alpha,\B)$-stable. Let $C = C_{\ref{thm:stability-counting}}(\HH, \delta)$, let $\eps = \eps_{\ref{thm:stability-counting}}(\HH, \delta)$, and assume that $m \ge Cn^{2-1/m_2(H)} \ge Cp_n v(\HH_n)$. Together with Proposition~\ref{prop:Delta-bal-hyp}, this implies that $\HH$ satisfies the assumptions of Theorem~\ref{thm:stability-counting} and hence, letting $f_{n,m}^\delta(H)$ denote the number of $H$-free graphs on the vertex set $[n]$ that have exactly $m$ edges and that are not $(\delta, \chi(H)-1)$-partite,
\[
f_{n,m}^\delta(H) \le (1-\eps)^m \binom{\pi(H)\binom{n}{2}}{m}.
\]
Finally, note that (trivially),
\[
f_{n,m}(H) \ge \binom{\pi(H)\binom{n}{2}}{m}
\]
and hence $f_{n,m}^\delta(H) = o\big(f_{n,m}(H)\big)$, as claimed.
\end{proof}

For $t$-uniform hypergraphs, there is no general stability theorem known; however, such results have been proved for a few specific hypergraphs (see~\cite{FPS05, FuSi05, KeMu04, KeSu05}), and in each case we obtain a corresponding result for sparse hypergraphs. For example, following~\cite{BaMu12}, let $F_5$ denote the `3-uniform triangle', i.e., the hypergraph with edge set isomorphic to $\{123,124,345\}$, and say that a 3-uniform hypergraph is \emph{triangle-free} if it contains no copy of $F_5$. The following theorem follows easily, as above, from Theorem~\ref{thm:stability-counting} combined with the hypergraph removal lemma of Gowers~\cite{Gow07} and R\"odl and Skokan~\cite{RoSk06} and the stability theorem for 3-uniform triangle-free hypergraphs, which was proved by Keevash and Mubayi~\cite{KeMu04}. 

\begin{thm}\label{3trianglestab}
For every positive $\delta$, there exists a constant $C$ such that the following holds. If $m \ge Cn^2$, then almost every triangle-free $3$-uniform hypergraph with $n$ vertices and $m$ edges can be made tripartite by removing from it at most $\delta m$ edges.
\end{thm}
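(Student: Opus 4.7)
The plan is to mirror the proof of Theorem~\ref{thm:H-free-structure} almost verbatim, substituting the 3-uniform hypergraph $F_5$ for the graph $H$ and using the corresponding hypergraph-level stability and removal tools in place of Proposition~\ref{prop:stability-removal}. Let $k = e(F_5) = 3$ and let $\HH_n$ be the $3$-uniform hypergraph on vertex set $E(K_n^3)$ whose edges are the edge sets of copies of $F_5$ in $K_n^3$. Independent sets in $\HH_n$ are precisely the $F_5$-free $3$-uniform hypergraphs on $[n]$. The Frankl--F\"uredi theorem gives $\pi(F_5) = 2/9$, achieved by the balanced complete tripartite construction, so we set $\alpha = 2/9$ and let $\B_n$ be the family of all tripartite $3$-uniform hypergraphs on $[n]$.

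Next I would compute $m_3(F_5)$: the maximum of $(e(H')-1)/(v(H')-3)$ over subhypergraphs $H' \subseteq F_5$ with $v(H') \ge 4$ is attained by $H' = F_5$ itself and equals $(3-1)/(5-3) = 1$. Thus $p_n = n^{-1/m_3(F_5)} = n^{-1}$ and $p_n v(\HH_n) = \Theta(n^2)$, so the hypothesis $m \ge Cn^2$ in the theorem matches the hypothesis $m \ge Cp_n v(\HH_n)$ needed to invoke Theorem~\ref{thm:stability-counting}. Proposition~\ref{prop:Delta-bal-hyp} supplies the required degree bounds $\Delta_\ell(\HH_n) \le c \cdot p_n^{\ell-1} e(\HH_n)/v(\HH_n)$ for every $\ell \in [3]$.

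The heart of the argument is verifying that $\HH$ is $(\alpha, \B)$-stable. Here I would combine two ingredients. First, the hypergraph removal lemma of Gowers~\cite{Gow07} and R\"odl--Skokan~\cite{RoSk06}: for every $\eta > 0$ there exists $\eps > 0$ such that any $3$-uniform hypergraph on $n$ vertices with fewer than $\eps n^5$ copies of $F_5$ can be made $F_5$-free by deleting at most $\eta\binom{n}{3}$ edges. Second, the Keevash--Mubayi stability theorem~\cite{KeMu04}: every $F_5$-free $3$-uniform hypergraph on $n$ vertices with at least $(2/9 - \eta)\binom{n}{3}$ edges differs from a tripartite hypergraph in at most $\delta\binom{n}{3}$ edges, provided $\eta$ is small enough relative to $\delta$. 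Chaining these just as in Proposition~\ref{prop:stability-removal} yields: for every $U \subseteq V(\HH_n)$ with $|U| \ge (\alpha - \eps)v(\HH_n)$, either $e(\HH_n[U]) \ge \eps e(\HH_n)$, or $U$ can be made tripartite by removing $\delta\binom{n}{3}$ triples, i.e.\ $|U \setminus B| \le \delta v(\HH_n)$ for some $B \in \B_n$. This is exactly the $(\alpha, \B)$-stability property.

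With these inputs in place, Theorem~\ref{thm:stability-counting} gives constants $\eps, C > 0$ such that for $m \ge Cn^2$ the number $f_{n,m}^\delta(F_5)$ of $F_5$-free $3$-uniform hypergraphs on $[n]$ with $m$ edges that are \emph{not} $(\delta,3)$-partite satisfies
\[
f_{n,m}^\delta(F_5) \le (1-\eps)^m \binom{(2/9)\binom{n}{3}}{m}.
\]
Since every subhypergraph of the balanced complete tripartite $3$-graph is $F_5$-free, the total number of $F_5$-free $3$-uniform hypergraphs with $m$ edges is at least $\binom{(2/9 - o(1))\binom{n}{3}}{m}$, and a standard manipulation using the binomial coefficient estimates~\eqref{eq:binomial-3}--\eqref{eq:binomial-4} (absorbing the $o(1)$ correction into $(1-\eps)^m$ by choosing the stability parameter small relative to $\eps$) shows that $f_{n,m}^\delta(F_5) = o(f_{n,m}(F_5))$. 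The main obstacle, if any, is purely bookkeeping: ensuring that the parameters in the Keevash--Mubayi stability theorem and the removal lemma are chained in the correct order so that the $(\alpha, \B)$-stability holds with the quantitative form Theorem~\ref{thm:stability-counting} demands; every genuinely hard step (the stability theorem, the removal lemma, Theorem~\ref{thm:stability-counting} itself) is invoked as a black box.
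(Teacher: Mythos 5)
Your proposal is correct and follows essentially the same route as the paper: define $\HH_n$ as the $3$-uniform hypergraph of copies of $F_5$ in $K_n^3$, verify $(\alpha,\B)$-stability for $\alpha = 2/9$ and $\B_n$ the tripartite hypergraphs via the hypergraph removal lemma combined with the Keevash--Mubayi stability theorem, check the degree condition with Proposition~\ref{prop:Delta-bal-hyp} and $p_n = n^{-1/m_3(F_5)} = n^{-1}$, and then apply Theorem~\ref{thm:stability-counting}. The additional bookkeeping you flag (chaining removal-lemma and stability parameters, absorbing the $o(1)$ into $(1-\eps)^m$) is handled in the paper at exactly the same level of detail, namely by citation.
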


\begin{proof}
Let $(\HH_n)_{n \in \N}$ be the sequence of $3$-uniform hypergraphs of copies of $F_5$ in $K_n^3$, set $\alpha = 2/9$,  and let $\B_n$ denote the collection of all complete tripartite subhypergraphs of $K_n^3$. By the hypergraph removal lemma~\cite[Theorem~1.3]{RoSk06}, combined with the stability theorem for triangle-free 3-uniform hypergraphs~\cite[Theorem~1.6]{KeMu04}, it follows that $\HH$ is $(\alpha,\B)$-dense. 

It follows by Proposition~\ref{prop:Delta-bal-hyp} that $\HH$ satisfies the conditions of Theorem~\ref{thm:stability-counting} with $p_n = n^{-1}$. Hence the number of triangle-free $3$-uniform hypergraphs with $n$ vertices and $m$ edges that cannot be made tripartite by removing at most $\delta m$ edges is at most
$$(1-\eps)^m \binom{\pi(F_5) {n \choose 3}}{m},$$
which easily implies the theorem.
\end{proof}

Finally, we remark that Theorem~\ref{3trianglestab} can be seen as an approximate sparse analogue of a result of Balogh and Mubayi~\cite{BaMu12}, who used the hypergraph regularity lemma and~\cite[Theorem~1.6]{KeMu04} to show that almost all triangle-free 3-uniform hypergraphs are tripartite. For similar results for other forbidden hypergraphs, see~\cite{BaMu11} and~\cite{PS09}.

\section{The K\L R Conjecture}

\label{sec:KLR}

In this section, we shall deduce from Theorem~\ref{thm:main} the K\L R conjecture, Theorem~\ref{thm:KLR}. As in the preceding sections, the proof will be a fairly straightforward application of Theorem~\ref{thm:main} to an appropriately defined hypergraph $\HH$ and family $\F \subseteq \cP(V(\HH))$. Let $H$ be an arbitrary graph and let $\HH$ be the $e(H)$-uniform hypergraph of canonical copies of $H$ in the complete blow-up of $H$. Defining an appropriate family $\F$ and showing that $\HH$ is $(\F,\eps)$-dense will require some work.

Given a graph $H$ and integers $n_1, \ldots, n_{v(H)}$, let us denote by $\G(H;n_1,\ldots,n_{v(H)})$ the collection of all graphs $G$ constructed in the following way. The vertex set of $G$ is a disjoint union $V_1 \cup \ldots \cup V_{v(H)}$ of sets of sizes $n_1, \ldots, n_{v(H)}$, respectively, one for each vertex of $H$. The only edges of $G$ lie between those pairs of sets $(V_i, V_j)$ such that $\{i,j\}$ is an edge of $H$. Recall the definition of $\G(H,n,m,p,\eps)$ from Section~\ref{sec:KLR-intro} and observe that $\G(H,n,m,p,\eps) \subseteq \G(H;n,\ldots,n)$ for all $m$, $p$, and $\eps$. 

The following lemma, which is a robust version of the embedding lemma, stated in Section~\ref{sec:KLR-intro}, suggests the right choice of $\F$. The lemma is well-known, and so we omit the (standard) proof.

\begin{lemma}
  \label{lemma:hole}
  Let $H$ be a graph and let $\delta \colon (0,1] \to (0,1)$ be an arbitrary function. There exist positive constants $\alpha_0$, $\xi$, and $N$ such that for every collection of integers $n_1, \ldots, n_{v(H)}$ satisfying $n_1, \ldots, n_{v(H)} \ge N$ and every graph $G \in \G(H;n_1, \ldots, n_{v(H)})$, one of the following holds:
  \begin{enumerate}[(a)]
  \item
    \label{item:hole-a}
    $G$ contains at least $\xi n_1 \dots n_{v(H)}$ canonical copies of $H$.
  \item
    \label{item:hole-b}
    There exist a positive constant $\alpha$ with $\alpha \ge \alpha_0$, an edge $\{i,j\} \in E(H)$, and sets $A_i \subseteq V_i$, $A_j \subseteq V_j$ such that $|A_i| \ge \alpha n_i$, $|A_j| \ge \alpha n_j$, and 
    $d_G(A_i,A_j) < \delta(\alpha)$.
  \end{enumerate}
\end{lemma}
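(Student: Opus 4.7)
The plan is to prove Lemma~\ref{lemma:hole} by a standard greedy embedding argument. Fix an ordering $1, 2, \ldots, h$ of the vertices of $H$, where $h = v(H)$, and attempt to build canonical copies of $H$ vertex by vertex: at step $i$ one chooses $v_i \in V_i$ that is $G$-adjacent to every previously selected $v_j$ with $\{j,i\} \in E(H)$. We maintain throughout, for each $k$ that has not yet been embedded, a ``candidate set'' $T^{(k)} \subseteq V_k$, initialised to $V_k$, of the vertices of $V_k$ still available for embedding.

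Before committing to $v_i$ at step $i$, we iteratively refine $T^{(i)}$, one forward neighbour at a time: for each $k > i$ with $\{i,k\} \in E(H)$, if the current pair $(T^{(i)}, T^{(k)})$ satisfies $d_G(T^{(i)}, T^{(k)}) < \delta(\alpha)$, where $\alpha := \min(|T^{(i)}|/n_i,|T^{(k)}|/n_k)$, then this pair itself exhibits case~(\ref{item:hole-b}) and we are finished; otherwise a routine Markov-type averaging argument replaces $T^{(i)}$ by a subset of size at least $\tfrac{\delta(\alpha)}{2}|T^{(i)}|$ all of whose vertices have at least $\tfrac{\delta(\alpha)}{2}|T^{(k)}|$ neighbours in $T^{(k)}$. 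After all forward neighbours of $i$ have been processed, every $v \in T^{(i)}$ is a valid choice for $v_i$, and replacing each relevant $T^{(k)}$ by $T^{(k)} \cap N_G(v_i)$ shrinks it by at most the same factor $\tfrac{\delta(\alpha)}{2}$.

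The total number of shrinking operations over the entire process (across all $i$ and their neighbours) is at most $L := 2e(H)$, a constant depending only on $H$. We therefore define a decreasing sequence $1 = \gamma_0 > \gamma_1 > \cdots > \gamma_L$ by the recursion $\gamma_{s+1} := \gamma_s \cdot \delta(\gamma_s)/2$, and set $\alpha_0 := \gamma_L$, $\xi := \gamma_L^h$, and $N := \lceil 1/\gamma_L \rceil$. By construction, every candidate set $T^{(k)}$ encountered during the process has size at least $\gamma_L n_k \ge 1$; in particular, whenever the dichotomy produces a hole, the minimum current fraction $\alpha$ satisfies $\alpha \ge \gamma_L = \alpha_0$, and the hole $(T^{(i)}, T^{(k)})$ meets all the requirements of case~(\ref{item:hole-b}). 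If no hole is ever found, then along every prefix $(v_1, \ldots, v_{i-1})$ we have at least $\gamma_L n_i$ valid commitments for $v_i$; multiplying over all $i$ yields at least $\gamma_L^h n_1 \cdots n_h = \xi n_1 \cdots n_h$ distinct canonical copies of $H$, which is case~(\ref{item:hole-a}).

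The main obstacle is the bookkeeping of shrinkage factors: the density threshold $\delta(\alpha)$ depends on the \emph{current} fraction $\alpha$, so the recursion $\gamma_s \mapsto \gamma_{s+1}$ must be invoked with the correct argument at each step, and one must verify that the minimum fraction across \emph{all} current candidate sets (not merely the one being shrunk) remains at least $\gamma_L$ throughout the procedure. Since $\delta$ is an arbitrary function, the sequence $\gamma_s$ may decay extremely rapidly, but this is of no consequence: $L$ depends only on $H$, so $\alpha_0$, $\xi$, and $N$ depend only on $H$ and $\delta$, as required.
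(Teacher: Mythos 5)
Your approach---greedy vertex-by-vertex embedding with candidate sets $T^{(k)}$, at each step either exhibiting a hole or shrinking a candidate set by a Markov-averaging argument, with the operation count bounded by $L = 2e(H)$ and the final constants $\alpha_0 = \gamma_L$, $\xi = \gamma_L^{v(H)}$, $N = \lceil 1/\gamma_L \rceil$---is the standard proof of this lemma, and is surely what the authors have in mind when they call it ``well-known'' and omit it. The averaging step, the operation count, and the final counting are all correct.

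There is, however, a genuine gap in the bookkeeping, precisely at the point you flag but do not resolve. The invariant you need is: after $s$ operations, every surviving candidate set has fraction at least $\gamma_s$. At step $s+1$ you shrink some set by the factor $\delta(\alpha)/2$, where $\alpha$ is the \emph{actual} current minimum fraction of the pair under consideration, which by the inductive hypothesis satisfies $\alpha \ge \gamma_s$. But $\gamma_{s+1}$ is defined using $\delta(\gamma_s)$, and since $\delta$ is only assumed to be an arbitrary function, $\alpha \ge \gamma_s$ gives you no control on $\delta(\alpha)$ versus $\delta(\gamma_s)$: a non-monotone $\delta$ can make $\delta(\alpha)$ much smaller than $\delta(\gamma_s)$, in which case the shrunk set drops below $\gamma_{s+1}$ and the induction breaks. (One can construct concrete numerical counterexamples to the invariant with $\delta$ chosen adversarially.) Your closing observation---that $\gamma_s$ may decay rapidly but $L$ is constant---is true but orthogonal to this issue. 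The fix is a one-line change: at the $s$-th operation, test whether $d_G\big(T^{(i)},T^{(k)}\big) < \delta(\gamma_s)$ rather than $< \delta(\alpha)$. If so, the pair $\big(T^{(i)},T^{(k)}\big)$ is a hole with witness $\alpha := \gamma_s \ge \gamma_L = \alpha_0$, which is legitimate because the invariant guarantees $|T^{(i)}| \ge \gamma_s n_i$ and $|T^{(k)}| \ge \gamma_s n_k$; if not, the averaging argument gives a shrink factor of at least $\delta(\gamma_s)/2$, so the invariant propagates to $\gamma_{s+1}$ with no assumption on $\delta$. (In the paper's own application the function $\delta$ from~\eqref{eq:delta-def} is increasing, so the issue is invisible there, but the lemma as stated allows arbitrary $\delta$.)
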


Our next lemma is also straightforward. It allows us to count $(\eps, p)$-regular subgraphs of a graph that has a `hole', as in Lemma~\ref{lemma:hole}(\ref{item:hole-b}). Recall that $\G(K_2,n,m,p,\eps)$ denotes the collection of all $(\eps, p)$-regular bipartite graphs with $m$ edges and $n$ vertices in each part. Given such $G$, let $V_1(G)$ and $V_2(G)$ denote the two parts. For each $\beta \in (0,1)$, define a function $\delta \colon (0,1] \to (0,1)$ by setting
\begin{equation}
  \label{eq:delta-def}
  \delta(x) = \frac{1}{4e} \left( \frac{\beta}{2} \right)^{2/x^2}
\end{equation}
for each $x \in (0,1]$. The following lemma says that a graph $\Gt$ that has a hole of size $\alpha n$ and density at most $\delta(\alpha)$ has very few subgraphs in $\G(K_2,n,m,m/n^2,\eps)$.

\begin{lemma}\label{lemma:holecount}
  For every positive $\alpha_0$ and $\beta$, there exists a positive constant $\eps$ such that the following holds. Let $\Gt \subseteq K_{n,n}$ be such that there exist subsets $A \subseteq V_1(\Gt)$ and $B \subseteq V_2(\Gt)$ with
  \[
  \min\{ |A|,|B|\} \ge \alpha n \quad \text{and} \quad d_G(A,B) < \delta(\alpha)
  \]
  for some $\alpha \in [\alpha_0,1]$, and let $S \subseteq \Gt$. Then, for every $m$ with $|S|/\eps \le m \le n^2$, there are at most 
  \[
  \beta^{m} \binom{n^2}{m - |S|}
  \]
  subgraphs of $\Gt$ that belong to $\G(K_2,n,m,m/n^2,\eps)$ and contain $S$.
\end{lemma}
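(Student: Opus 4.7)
The plan is to exploit the tension between the $(\eps,p)$-regularity condition and the low density of $\Gt[A,B]$. Setting $p = m/n^2$, $a = |A|$, $b = |B|$, and $\tau = \delta(\alpha)$, and choosing $\eps \le \alpha_0$ so that the regularity condition genuinely applies to the pair $(A,B)$, any candidate $G$ satisfies $|d_G(A,B) - p| \le \eps p$, and hence $e_G(A,B) \ge (1-\eps)p\,ab$. On the other hand, $G \subseteq \Gt$ forces $e_G(A,B) \le e_{\Gt}(A,B) < \tau ab$. If $(1-\eps)p > \tau$, then no valid $G$ exists and the lemma is trivial; otherwise $m \le \tau n^2/(1-\eps) \le n^2/2$, since $\tau < 1/(4e)$.

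In the nontrivial case, write $L := (1-\eps)p\,ab$ and $s_1 := |S \cap (A \times B)|$. Using the hypothesis $m \ge |S|/\eps$ together with $ab \ge \alpha^2 n^2$, one obtains
\[
L - s_1 \ge (1-\eps)\alpha^2 m - \eps m \ge \tfrac{\alpha^2}{2}\,m,
\]
provided $\eps$ is small in terms of $\alpha_0$ (for instance $\eps \le \alpha_0^2/4$). Hence $G \setminus S$, an $M$-element subset of $K_{n,n} \setminus S$ (where $M := m - |S|$), must contain at least $L - s_1$ edges of $\Gt[A,B] \setminus S$, a set of size less than $\tau ab \le \tau n^2$. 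Therefore the number of valid $G$'s containing $S$ is at most
\[
\sum_{j \ge L - s_1} \binom{\tau n^2}{j}\binom{n^2 - |S|}{M - j}.
\]

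To estimate this sum, divide through by $\binom{n^2 - |S|}{M} \le \binom{n^2}{M}$ and use the standard ratio bound $\binom{n^2 - |S|}{M-j}/\binom{n^2 - |S|}{M} \le (2p)^j$, valid because $M \le n^2/2$. Each summand then becomes at most $\binom{\tau n^2}{j}(2p)^j \le (2e\tau m/j)^j$. At the critical index $j = L - s_1 \ge \alpha^2 m/2$, invoking the identity $4e\tau = (\beta/2)^{2/\alpha^2}$, this evaluates to
\[
\left(\frac{4e\tau}{\alpha^2}\right)^{\alpha^2 m/2} = (\beta/2)^m \cdot \alpha^{-\alpha^2 m} \le (\beta/2)^m \cdot e^{m/(2e)} \le \beta^m,
\]
where the elementary calculus inequality $-x^2 \log x \le 1/(2e)$ for $x \in (0,1]$ (yielding $\alpha^{-\alpha^2} \le e^{1/(2e)} < 2$) is used in the last two steps. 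Since $\tau$ is extremely small in comparison to $\alpha_0^2$, the terms $\binom{\tau n^2}{j}(2p)^j$ decay geometrically for $j \ge L - s_1$, so the full sum is at most a constant multiple of the leading term, which a mild tightening of constants (or a preliminary replacement of $\beta$ by $\beta/2$) absorbs back into $\beta^m$.

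The main obstacle will be keeping the hierarchy of parameters consistent: $\eps$ must be small in terms of $\alpha_0$ so that both the regularity condition applies to $(A,B)$ and the bound $L - s_1 \ge \alpha^2 m/2$ holds, while the extravagant shape of $\delta(\alpha) = \frac{1}{4e}(\beta/2)^{2/\alpha^2}$ fixed in~\eqref{eq:delta-def} is precisely what is needed to cancel the troublesome factor $\alpha^{-\alpha^2 m}$ coming from the Chernoff-type estimate and arrive at the clean bound $\beta^m$. Everything else is routine bookkeeping.
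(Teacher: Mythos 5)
Your proof is correct and follows essentially the same approach as the paper: use $(\eps,p)$-regularity of a candidate $G$ to force the pair $(A,B)$ to carry roughly $(1-\eps)p|A||B| \ge \tfrac{\alpha^2}{2}m$ edges of $G\setminus S$, note that these must all come from the sparse set $\Gt[A,B]$, and then bound the resulting hypergeometric-type sum using the same binomial estimates and the observation that the summand decreases on the relevant range. The only cosmetic difference is that the paper first normalizes $|A|=|B|=\alpha n$, keeping an $\alpha^2$ inside the binomial coefficient and landing on $(4e\delta(\alpha))^{\alpha^2 m/2} = (\beta/2)^m$ directly, whereas you bound $\binom{\tau ab}{j}$ by $\binom{\tau n^2}{j}$ and pay the resulting factor $\alpha^{-\alpha^2 m}$ with the elementary bound $\alpha^{-\alpha^2} \le e^{1/(2e)} < 2$; both versions leave enough slack to swallow the sum over $j$.
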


\begin{proof}
  We begin by noting that, by choosing random subsets of $A$ and $B$ if necessary, we may assume that $|A| = |B| = \alpha n$. Set $\eps = \min\{\alpha_0^2/4, 1/4\}$, write $\G^*$ for the family of all subgraphs of $\Gt$ that belong to $\G(K_2,n,m,m/n^2,\eps)$ and contain $S$, and let $G \in \G^*$. In particular, $G$ is $(\eps,p)$-regular, where $p = m/n^2$ and since $\eps \le \alpha$, it follows that the pair $(A,B)$ must have density at least $(1-\eps)p$ in $G$, and hence must contain at least $(1 - \eps - \eps/\alpha^2)p|A||B|$ edges of $E(G) \setminus S$, since $|S| \le \eps m$. Set $m' = m - |S|$ and $\eps' = \eps( 1 + 1/\alpha^2) \le 1/2$, and let us write $e_{\Gt}(A,B)$ for the number of edges of $\Gt$ that lie between the sets $A$ and $B$. Since $d_{\Gt}(A,B) < \delta(\alpha)$, then the number of choices for $G$ can be estimated as follows:
  \begin{equation}
    \label{eq:holecount-1}
    |\G^*| \le \sum_{\ell \ge (1- \eps')p|A||B|} \binom{e_{\Gt}(A,B)}{\ell} \binom{e(\Gt) - e_{\Gt}(A,B)}{m' - \ell} \le \sum_{\ell \ge \alpha^2 m/2} \binom{\delta(\alpha) \alpha^2 n^2}{\ell} \binom{n^2}{m' - \ell}.
  \end{equation}
  Note that the right-hand side of~\eqref{eq:holecount-1} is zero if $m > 2\delta(\alpha) n^2$, so we may assume that $m' \le m \le 2\delta(\alpha) n^2 \le n^2/2$. Thus, using~\eqref{eq:binomial-1} and~\eqref{eq:binomial-2}, \eqref{eq:holecount-1} implies that
  \begin{equation}
    \label{eq:holecount-2}
    |\G^*| \le \sum_{\ell \ge \alpha^2 m/2} \left( \frac{e \delta(\alpha) \alpha^2 n^2}{\ell} \right)^\ell \left( \frac{m'}{n^2 - m'} \right)^\ell \binom{n^2}{m'} \le \sum_{\ell \ge \alpha^2 m/2} \left( \frac{2e \delta(\alpha) \alpha^2 m}{\ell} \right)^\ell \binom{n^2}{m'}.
  \end{equation}
  Since $\delta(\alpha) < 1/4e$, the summand in the right-hand side of~\eqref{eq:holecount-2} is decreasing in $\ell$ on $(\alpha^2 m/2,\infty)$ and hence
  \[
  |\G^*| \le m \big( 4e \delta(\alpha) \big)^{\alpha^2 m/2} \binom{n^2}{m'} \le \beta^m \binom{n^2}{m'},
  \]
  as required, since $\big( 4e \delta(\alpha) \big)^{\alpha^2/2} = \beta/2$. 
\end{proof}

We can now easily deduce Theorem~\ref{thm:KLR} from Theorem~\ref{thm:main}.

\begin{proof}[Proof of Theorem~\ref{thm:KLR}]
  Let $H$ be a fixed graph, let $n \in \N$, and let $H(n)$ be the largest graph in the family $\G(H;n, \ldots, n)$, i.e., the complete blow-up of $H$, where each vertex of $H$ is replaced by an independent set of size $n$ and each edge of $H$ is replaced by the complete bipartite graph $K_{n,n}$. Let $\HH$ be the $e(H)$-uniform hypergraph on the vertex set $E(H(n))$ whose edges are all $n^{v(H)}$ canonical copies of $H$ in $H(n)$. 
  
  Fix an arbitrary positive constant $\beta$, let $\delta \colon (0,1] \to (0,1)$ be the function defined in~\eqref{eq:delta-def} with $\beta$ replaced by $\beta/2$, i.e., set
  \[
  \delta(x)  = \frac{1}{4e} \left( \frac{\beta}{4} \right)^{2/x^2}
  \]   
  for each $x \in (0,1]$, and let $\alpha_0 = (\alpha_0)_{\ref{lemma:hole}}(H, \delta)$, $\xi = \xi_{\ref{lemma:hole}}(H,\delta)$, and $N = N_{\ref{lemma:hole}}(H,\delta)$. Let $\F$ be the family of all subgraphs of $H(n)$, i.e., graphs in $\G(H;n, \ldots, n)$, for which (\ref{item:hole-b}) in Lemma~\ref{lemma:hole} is \emph{not} satisfied. Clearly $\F$ is an upset, and so, by Lemma~\ref{lemma:hole}, $\HH$ is $(\F,\xi)$-dense provided that $n \ge N$. 
  
  Now, since $\HH$ is contained in the hypergraph of all copies of $H$ in the complete graph on $v(H)n$ vertices and contains a positive proportion of those copies, it follows from Proposition~\ref{prop:Delta-bal-hyp} that $\HH$ satisfies the assumptions of Theorem~\ref{thm:main} with $p = n^{2 - 1/m_2(H)}$ and $\eps = \xi$, for some constant $c$ depending only on $H$. Therefore, there is a constant $C'$, a family $\S \subseteq \binom{E(H(n))}{\le C'n^{2-1/m_2(H)}}$, and functions $f \colon \S \to \ol{\F}$ and $g \colon \I(\HH) \to \S$ such that 
  \[
  g(I) \subseteq I \qquad \text{and} \qquad I \setminus g(I) \subseteq f(g(I))
  \]
  for every $I \in \I(\HH)$.

  Let $\eps$ be a sufficiently small positive constant such that, in particular, $\eps \le \eps_{\ref{lemma:holecount}}(\alpha_0, \beta/2)$, let $C = C' / \eps$, and suppose that $m \ge Cn^{2-1/m_2(H)}$. Let $\G^* = \G^*(H,n,m,m/n^2,\eps)$ and note that $\G^* \subseteq \I(\HH)$. We are required to bound from above the number of graphs in $\G^*$. 

  To this end, fix an $S \in \S$, let
  \[
  \G_S^* = \big\{ G \in \G^* \colon g(G) = S \big\},
  \]
  and let $G_S = f(S)$. For each $\{i,j\} \in E(H)$, let $s(i,j) =  e_S(V_i, V_j)$ and note that $\sum_{ij \in E(H)} s(i,j) = |S|$. Since
  \[
  |S| \le C'n^{2-1/m_2(H)} \le \eps \cdot Cn^{2-1/m_2(H)} \le \eps m,
  \]
  then $s(i,j) \le \eps m$ for every $\{i,j\} \in E(H)$. 
  
  Now, since $G_S \in \ol{\F}$, it follows that there exist an $\alpha \in [\alpha_0,1]$, an edge $\{i, j\} \in E(H)$, and sets $A_i \subseteq V_i$, $A_j \subseteq V_j$ such that $|A_i|, |A_j| \ge \alpha n$ and $d_{G_S}(A_i,A_j) < \delta(\alpha)$. By Lemma~\ref{lemma:holecount}, it follows that there are at most 
  \[
  \left( \frac{\beta}{2} \right)^{m} \binom{n^2}{m - s(i,j)}
  \]
  choices for the edges between $V_i$ and $V_j$ such that $G[V_i,V_j] \in \G(K_2,n,m,m/n^2,\eps)$ and $S[V_i,V_j] \subseteq G[V_i,V_j] \subseteq S \cup G_S[V_i,V_j]$. It follows immediately that 
  \[
  |\G_S^*| \le \left(\frac{\beta}{2}\right)^m \prod_{ij \in E(H)} {n^2 \choose m - s(i,j)}.
  \]
  Summing over sets $S \in \S$, and using~\eqref{eq:binomial-1} and \eqref{eq:binomial-2}, we obtain
  \begin{align*}
    |\G^*| & \le \sum_{S \in \S} \left(\frac{\beta}{2}\right)^m \prod_{ij \in E(H)} \left( \frac{m}{n^2 - m} \right)^{s(i,j)} \binom{n^2}{m} = \left(\frac{\beta}{2}\right)^m \binom{n^2}{m}^{e(H)} \sum_{S \in \S} \left( \frac{m}{n^2 - m} \right)^{|S|} \\
    & \le \left(\frac{\beta}{2}\right)^m \binom{n^2}{m}^{e(H)} \sum_{s \le \eps m} \binom{e(H)n^2}{s} \left( \frac{2m}{n^2} \right)^{s} \le \left(\frac{\beta}{2}\right)^m \binom{n^2}{m}^{e(H)} \sum_{s \le \eps m} \left( \frac{2e \cdot e(H)m}{s} \right)^s.
  \end{align*}
  Now, since $\eps$ was chosen to be sufficiently small, it follows that the summand above is increasing in $s$ on $(0,\eps m]$ and hence 
  \[
  |\G^*| \le \left(\frac{\beta}{2}\right)^m \binom{n^2}{m}^{e(H)} m \left( \frac{2e \cdot e(H)}{\eps} \right)^{\eps m} \le \beta^m  {n^2 \choose m}^{e(H)},
  \] 
  as required.
\end{proof}

\noindent
{\bf Acknowledgement.}
The third author would like to thank Noga Alon and David Conlon for stimulating discussions. The authors would also like to thank David Conlon and Yoshiharu Kohayakawa for helpful comments on the manuscript, and David Saxton for pointing out the usefulness of allowing multiple edges. Finally, we would like to thank the anonymous referee for a very careful reading of the proof, and a plenitude of helpful suggestions.

\bibliographystyle{amsplain}
\bibliography{BMS_k-uniform}

\end{document}